\DeclareMathOperator{\Stab}{Stab}
\DeclareMathOperator{\FC}{FC}
\setlist[itemize]{leftmargin=15pt}
\theoremstyle{plain}
\newtheorem{thm}{Theorem}[section]
\newtheorem{cor}[thm]{Corollary}
\newtheorem{lem}[thm]{Lemma}
\newtheorem{prop}[thm]{Proposition}
\theoremstyle{definition}
\newtheorem{eg}[thm]{Example}
\newtheorem{rem}[thm]{Remark}
\newtheorem{con}[thm]{Construction}
\newtheorem{op}[thm]{Open Problem}
\title{On separability finiteness conditions in semigroups}
\author{Craig Miller, Gerard O'Reilly, Martyn Quick, Nik Ru{\v s}kuc}
\address{School of Mathematics and Statistics, University of St Andrews, St Andrews, Scotland, UK}
\email{$\{$cm380, gao2, mq3, nr1$\}$@st-andrews.ac.uk}
\keywords{Separability, finiteness condition, semigroup, congruence, residual finiteness, commutative semigroup, Sch\"utzenberger group}
\date{}
\begin{document}

\begin{abstract}
	Taking residual finiteness as a starting point, we consider three related finiteness properties: weak subsemigroup separability, strong subsemigroup separability and complete separability.
	We investigate whether each of these properties is inherited by Sch\"utzenberger groups.
	The main result of this paper states that for a finitely generated commutative semigroup $S$, these three separability conditions coincide and are equivalent to every $\mathcal{H}$-class of $S$ being finite. 
	We also provide examples to show that these properties in general differ for commutative semigroups and finitely generated semigroups. 
	For a semigroup with finitely many $\mathcal{H}$-classes, we investigate whether it has one of these properties if and only if all its Sch\"utzenberger groups have the property.
\end{abstract}

\maketitle

\section{Introduction}
A finiteness condition for a class of algebraic structures is a property that is satisfied by at least all finite members of that class. 
The study of finiteness conditions has been instrumental in understanding the structure and behaviour of algebras. 
A classic example is that of residual finiteness. 
An algebraic structure $A$ is said to be \emph{residually finite} if for any two distinct points $x,y \in  A$ there exists a finite algebra $U$ and a homomorphism $f: A \to U$ such that $f(x) \neq f(y)$. 
The property of residual finiteness is well studied and has proved to be a powerful tool. 
For example, it is known that if an algebra is finitely presented and residually finite then its word problem is solvable, as shown by Evans \cite{evans1978} who attributes this to Mal'cev. 
Another important example where this property arises is in the context of Zelmanov's positive solution \cite{Z1, Z2} of the Restricted Burnside Problem, which can be interpreted as saying that a finitely generated, residually finite group of finite exponent is necessarily finite.

Residual finiteness is an instance of what we call a \emph{separability finiteness condition}.
The notion of separability concerns separating an element from a subset in a finite quotient.
Formally, given a class of algebras $\mathcal{C}$, an algebra $A\in \mathcal{C}$, an element $x \in A$ and a subset $S \subseteq A\!\setminus\! \{x\}$, we say that $x$ can be \emph{separated} from $S$ if there exists a finite algebra $U\in\mathcal{C}$ and a homomorphism $f:A \to U$ such that $f(x) \notin f(S)$. 
In this case we say that $f$ \emph{separates} $x$ from $S$. 
Considering collections of subsets of a certain type (such as finite subsets, subalgebras, etc.)\ gives rise to various separability finiteness conditions. 
More precisely, let $\mathcal{C}$ be a class of algebras, let $A \in \mathcal{C}$ and $\mathcal{S}$ a collection of subsets of $A$. 
We say that an algebra $A$ has the \emph{separability property with respect to $\mathcal{S}$} if for any $x \in A$ and any $\mathcal{S}$-subset $Y \subseteq A\!\setminus\!\{x\}$, the element $x$ can be separated from $Y$.
We note that when $\mathcal{S}$ is the collection of singleton subsets, this is equivalent to residual finiteness. 
If the class $\mathcal{C}$ of algebras is closed under direct products, then residual finiteness is equivalent to an algebra having the separability property with respect to the collection of finite subsets.
By varying $\mathcal{S}$, we will consider several properties. 
Many of these properties have already been studied in different contexts and under various names.

When $\mathcal{S}$ is the collection of all finitely generated subalgebras, we say that an algebra is \emph{weakly subalgebra separable.} 
Weak subalgebra separability can play a similar role to residual finiteness, in that if an algebra is finitely presented and weakly subalgebra separable then the generalised word problem is solvable \cite{evans1978}. 
Evans referred to weak subalgebra separability as \emph{finite divisibility}. 
In group theory, this property is known simply as \emph{subgroup separability} or \emph{locally extended residual finiteness} (LERF)\@. 
This group-theoretic property has received considerable attention. Many classes of groups have been shown to be weakly subgroup separable including free groups \cite{10.2307/1990483}, fundamental groups of geometric 3-manifolds \cite{MR3104553}, and finitely generated nilpotent groups (which includes finitely generated abelian groups)  \cite{Malcev}.
Within semigroup theory this property has received less attention, but Golubov did briefly consider it amongst some other separability properties in \cite{MR0283113}.

If $\mathcal{S}$ is the collection of all subalgebras, we say that an algebra is \emph{strongly subalgebra separable}. 
In group theory this is known as \emph{extended residual finiteness} (ERF)\@. 
Although strongly subgroup separable groups have not received as much attention as weakly subgroup separable groups, they have been studied to some extent, for example in \cite{ROBINSON2009421}. 
In semigroup theory, this property has been known as \emph{finite divisibility} or \emph{finite separability}. 
Strongly subsemigroup separable semigroups were considered by Lesohin in \cite{MR0238969, MR0244416} and were then intensively studied by Golubov in \cite{MR0274613, MR0320186, MR318355, MR1094802}. 
In \cite{MR0274613} Golubov characterised when commutative semigroups are strongly subsemigroup separable.

Finally, we consider the case where $\mathcal{S}$ is the collection of all subsets. 
Then we are able to separate any point in an algebra from any subset of its complement. 
We call algebras satisfying this condition \emph{completely separable}. 
Completely separable semigroups are also known as \emph{semigroups with finitely divisible subsets} and again were studied by Golubov. 
In \cite{MR0274613} he showed that free semigroups and free commutative semigroups are completely separable, and also proved that complete separability and strong subsemigroup separability coincide for semigroups without idempotents. 

Due to the many different and sometimes inconsistent names used for these properties, we have decided to introduce our own terms, i.e., weak subalgebra separability, strong subalgebra separability and complete separability. 
In our nomenclature, the names are designed to describe the properties and highlight the relationship between the different properties. 
Under this system of names, it might be more appropriate to call residual finiteness \emph{point separability}. 
However, as the term `residually finite' is now universally used in literature, we have decided to maintain its use. 

The purpose of this paper is to discuss the properties of weak subalgebra separability, strong subalgebra separability and complete separability for semigroups, and to explore the relationships between them. 
In Section 2, we outline the necessary preliminary definitions and results. 
In Section 3, we investigate how Sch\"utzenberger groups are affected by these properties. 
This builds upon observations of when groups have the different separability properties within the class of groups and when they have them within the class of semigroups. 
Somewhat surprisingly, given that Sch\"utzenberger groups can be seen as a generalisation of maximal subgroups, we discover that Sch\"utzenberger groups of non-regular $\mathcal{H}$-classes exhibit different behaviour from maximal subgroups. 

Section 4 begins with a brief summary of work by Kublanovski{\u \i} and Lesohin \cite{MR541122}. 
These authors showed that in finitely generated commutative semigroups, the properties of complete separability and strong subsemigroup separability coincide. 
We strengthen their result by showing that in finitely generated commutative semigroups, complete separability also coincides with weak subsemigroup separability, and they are all equivalent to every $\mathcal{H}$-class of $S$ being finite. 

The question arises whether these three properties coincide in more general classes of semigroups. 
Section 5 is dedicated to providing examples to show that all three are in fact different for semigroups in general, and in particular for (infinitely generated) commutative semigroups and for finitely generated (non-commutative) semigroups.

Finally, in Section 6, we consider a special case where a semigroup only has finitely many $\mathcal{H}$-classes. 
For a semigroup of this type, we investigate whether it has one of our separability conditions if and only if all of its Sch\"utzenberger groups do. 
We solve this question for complete separability and strong subsemigroup separability. 
For weak subsemigroup separability this remains an open question, but we provide a partial solution.

\section{Preliminaries}
We begin by establishing some notation. 
For a semigroup $S$ and a non-empty subset $X \subseteq S$, we use $\langle X \rangle$ to denote the subsemigroup generated by the set $X$. 
For a congruence $\sim$ on $S$, we denote the congruence class of the element $x$ by $[x]_\sim$.
A congruence that has only finitely many congruence classes shall be known as a \emph{finite index congruence}.
Given an ideal $I$ of a semigroup $S$, the \emph{Rees congruence} $\sim_I$ on $S$ is given by 
$$a\,\sim_I\,b\iff a, b\in I\text{ or }a=b.$$  
The quotient $S/\!\!\sim_I$ is called the \emph{Rees quotient} of $S$ by $I$, and is denoted by $S/I$.  
We denote the Rees congruence class of an element $x\in S$ by $[x]_I$.

In the introduction we framed separability in the language of general algebra. 
From now on we focus on separability for \emph{semigroups}. 
For clarity we now formally restate our separability conditions in the semigroup setting and also reformulate them in terms of congruences.  
\begin{itemize}
	\item A semigroup $S$ is said to be \emph{residually finite} if for any two distinct elements $s,t  \in S$ there exists a finite semigroup $U$ and homomorphism $f: S \to U$ such that $f(s) \neq f(t)$. 
	This is equivalent to saying there exists a congruence $\sim$ of finite index such that $[s]_{\sim}\neq [t]_{\sim}$.  
	
	\item A semigroup $S$ is \emph{weakly subsemigroup separable} if for any finitely generated subsemigroup $T$ and any $s \in S\!\setminus\!T$ there exists a finite semigroup $U$ and homomorphism $f: S \to U$ such that $f(s) \notin f(T)$. 
	This is equivalent to saying there exists a congruence $\sim$ of finite index such that $[s]_{\sim} \neq [t]_ {\sim}$ for all $ t \in T$. 
	
	\item A semigroup $S$ is \emph{strongly subsemigroup separable} if for any subsemigroup $T$ and any $s \in S \!\setminus\! T$ there exists a finite semigroup $U$ and homomorphism $f: S \to U$ such that $f(s) \notin f(T)$. 
	Again, this is equivalent to saying there exists a congruence $\sim$ of finite index such that $[s]_{\sim} \neq [t]_ {\sim}$ for all $ t \in T$.  
	
	\item A semigroup $S$ is \emph{completely separable} if for any $s \in S$ there exists a finite semigroup $U$ and homomorphism $f:S \to U$ such that $f(s) \notin f(S\!\setminus\! \{s\})$. 
	This is equivalent to saying there exists a congruence $\sim$ of finite index such that $[s]_{\sim}=\{s\}$.
\end{itemize}
For a semigroup $S$, the \emph{profinite topology} on $S$ is defined as the topology on $S$ with a basis consisting of all congruence classes of all finite index congruences on $S$. 
Any semigroup equipped with the profinite topology is a topological semigroup. 
We can now express the above properties in topological terms.

\begin{itemize}
\item A semigroup is residually finite if and only if every singleton is closed in the profinite topology. This is equivalent to saying that the intersection of all finite index congruences is the equality relation. 
\item A semigroup is weakly subsemigroup separable if and only if all finitely generated subsemigroups are closed in the profinite topology. 
\item A semigroup is strongly subsemigroup separable if and only if all subsemigroups are closed in the profinite topology. 
\item A semigroup is completely separable if and only if the profinite topology is discrete.
\end{itemize}

We state how these properties relate to one another in the following result. 

\begin{prop}
	\label{lem}
	For a semigroup $S$ the following hold:
	\begin{enumerate}
		\item[\textup{(1)}] If $S$ is completely separable then it is strongly subsemigroup separable.
		\item[\textup{(2)}] If $S$ is strongly subsemigroup separable then it is weakly subsemigroup separable.
		\item[\textup{(3)}] If $S$ is weakly subsemigroup separable then it is residually finite.
	\end{enumerate}
\end{prop}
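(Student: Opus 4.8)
The plan is to argue throughout with the congruence reformulations listed immediately above, so that each implication reduces to producing a single finite index congruence of the required kind. Implications (1) and (2) should then be quick, and (3) will carry all the weight.

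For (1), suppose $S$ is completely separable and let $T$ be a subsemigroup with $s\in S\setminus T$. Complete separability provides a finite index congruence $\sim$ with $[s]_\sim=\{s\}$; since $s\notin T$, every $t\in T$ satisfies $t\neq s$, hence $t\notin[s]_\sim$ and so $[t]_\sim\neq[s]_\sim$. Thus $\sim$ witnesses strong separability. For (2), I would simply note that a finitely generated subsemigroup is in particular a subsemigroup, so the congruence delivered by strong separability already separates $s$ from the finitely generated $T$.

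For (3), given distinct $s,t\in S$, the first idea is to find a finitely generated subsemigroup containing exactly one of them. If $s\notin\langle t\rangle$, then weak separability applied to the (finitely generated) subsemigroup $\langle t\rangle$ yields a finite index congruence $\sim$ with $[s]_\sim\neq[x]_\sim$ for all $x\in\langle t\rangle$, so in particular $[s]_\sim\neq[t]_\sim$; the case $t\notin\langle s\rangle$ is handled symmetrically. The obstruction is the case $s\in\langle t\rangle$ and $t\in\langle s\rangle$, where every subsemigroup containing one of $s,t$ contains the other, so no choice of subsemigroup separates them directly.

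To overcome this I would unpack the structure forced by mutual containment. Writing $s=t^{i}$ and $t=s^{j}$ gives $t=t^{ij}$ with $ij\ge 2$ (since $i=1$ would force $s=t$), so $\langle s\rangle=\langle t\rangle$ is finite and, being one-generated with $t$ equal to a higher power of itself, is a finite cyclic group $C$ with identity idempotent $e$. The trick is then to separate not $s$ from $t$, but $u:=st^{-1}$ from $e$, where $t^{-1}$ denotes the inverse of $t$ in $C$ (a power of $t$, hence an element of $S$); here $u\neq e$ precisely because $s\neq t$. Since $\langle e\rangle=\{e\}$ is finitely generated, weak separability supplies a finite index congruence $\sim$ with $[u]_\sim\neq[e]_\sim$. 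If we had $[s]_\sim=[t]_\sim$ then $[u]_\sim=[s]_\sim[t]_\sim^{|C|-1}=[t]_\sim^{|C|}=[e]_\sim$, a contradiction, so $[s]_\sim\neq[t]_\sim$. I expect this final case to be the main obstacle, the crucial insight being to trade the inseparable pair $(s,t)$ for the pair $(u,e)$, which can be separated using the singleton idempotent subsemigroup $\{e\}$.
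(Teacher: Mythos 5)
Your proposal is correct and follows essentially the same route as the paper: (1) and (2) are immediate from the definitions, and for (3) you split into the same two cases, handling the mutual-containment case by observing that $\langle s\rangle=\langle t\rangle$ is a finite cyclic group and separating $st^{-1}$ from the singleton subsemigroup $\{e\}$, exactly as the paper does.
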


\begin{proof}
	It is clear from the definitions that the first two claims are true. 
	For (3), assume that $S$ is weakly subsemigroup separable and let $a,b$ be distinct elements of $S$. 
	Let $A=\langle a \rangle$ and $B=\langle b \rangle$. We split into two cases. 
	Firstly we assume that one of $a,b$ is not contained in the monogenic subsemigroup generated by the other. 
	Without loss of generality assume that $a \notin B$. 
	Then as $S$ is subsemigroup separable, there exists a finite semigroup $U$ and homomorphism $f:S \to U$ such that $f(a) \notin f(B)$. 
	As $f(b) \in f(B)$ it follows that $f(a) \neq f(b)$.
	
	The second case is that $a \in B$ and $b \in A$. 
	Then $a=b^i$ for some $i \in \mathbb{N}$ and $b=a^j$ for some $j \in \mathbb{N}$. 
	Then it must be the case that $A=B$ and that $A$ is a finite cyclic group. Let $e$ denote the identity of $A$ and let $c$ be the inverse of $b$. 
	Then $\langle e \rangle =\{e\}$. 
	We note that neither $a$ nor $b$ is equal to $e$, as in either case this would imply that $a=b=e$.
	As $a \neq b$ it follows that $ac \neq e$. 
	Then, as $S$ is weakly subsemigroup separable, there exists a finite semigroup $U$ and homomorphism $f: S \to U$ such that $f(ac) \neq f(e)$. 
	It follows that $f(a) \neq f(b)$.
	Hence $S$ is residually finite.
\end{proof}

\begin{rem}
	\normalfont It is clear that the first two statements of Proposition \ref{lem} hold in any class of algebras. 
	However, it is not true in general that weak subalgebra separability implies residual finiteness.
	 One example is the class of fields. 
	 Since $\mathbb{Q}$ has no proper subfields, it is weakly subfield separable. 
	 However, $\mathbb{Q}$ has no finite quotients so it is not residually finite.	
\end{rem}

In general, the converse of each implication from Proposition \ref{lem} is not true. 
This is demonstrated in Examples \ref{eg1}, \ref{eg5} and \ref{square-free}. 
For every finite semigroup the profinite topology is discrete, so we can immediately observe that every finite semigroup is completely separable and hence that all four separability properties are finiteness conditions. 
The following proposition shows that subsemigroups inherit each of these separability properties.

\begin{prop}
	\label{lem5}
	Let $S$ be a semigroup and $T$ a subsemigroup of $S$. Let $\mathscr{P}$ be any of the following properties: complete separability, strong subsemigroup separability, weak subsemigroup separability, and residual finiteness. 
	If $S$ has property $\mathscr{P}$ then $T$ also has property $\mathscr{P}$.
\end{prop}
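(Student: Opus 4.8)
The plan is to prove all four cases by a single \emph{restriction} argument, exploiting the fact that a homomorphism from $S$ into a finite semigroup restricts to a homomorphism from $T$ into a finite semigroup. Concretely, for each property the separability statement for $T$ asks us to separate an element $s \in T$ from a forbidden set $Y \subseteq T\!\setminus\!\{s\}$ of a prescribed type. First I would show that $Y$ is (or is contained in) a forbidden set $Y' \subseteq S\!\setminus\!\{s\}$ of the same type in $S$, apply the hypothesis that $S$ has $\mathscr{P}$ to obtain a finite semigroup $U$ and a homomorphism $f \colon S \to U$ with $f(s) \notin f(Y')$, and then observe that $f|_T \colon T \to U$ is a homomorphism into a finite semigroup satisfying $f|_T(s) = f(s) \notin f(Y')$. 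Since $f(Y) \subseteq f(Y')$, this yields $f|_T(s) \notin f|_T(Y)$, as required.

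Next I would spell out how the forbidden set transfers in each of the four cases. For residual finiteness the forbidden set is a singleton $\{t\}$, which is of course also a singleton in $S$. For strong subsemigroup separability it is an arbitrary subsemigroup $T'$ of $T$; since the subsemigroup relation is transitive, $T'$ is a subsemigroup of $S$, so one may take $Y' = T'$. For weak subsemigroup separability it is a finitely generated subsemigroup $T' = \langle X \rangle$ of $T$ with $X$ finite; here the key point is that $\langle X \rangle$ computed inside $T$ coincides with $\langle X \rangle$ computed inside $S$ (both being the set of all finite products of elements of $X$), so again $Y' = T'$ works. For complete separability the forbidden set is the whole complement $T\!\setminus\!\{s\}$, which is contained in $S\!\setminus\!\{s\}$; taking $Y' = S\!\setminus\!\{s\}$ and using $f(s) \notin f(S\!\setminus\!\{s\})$ gives the conclusion by monotonicity of separation.

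I expect no serious obstacle here: the statement is essentially a bookkeeping exercise built on two elementary facts, namely that restricting a homomorphism preserves separation and that separating an element from a set also separates it from every subset of that set. The only points requiring a moment's care are verifying that finitely generated and arbitrary subsemigroups of $T$ are genuinely subsemigroups of $S$ of the same type, and, for complete separability, noticing that the relevant forbidden set in $T$ is merely a subset of the corresponding forbidden set in $S$, so that monotonicity rather than equality is what is invoked.
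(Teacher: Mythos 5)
Your proposal is correct and follows essentially the same route as the paper: observe that a forbidden set of the relevant type in $T$ is also one in $S$, apply separability in $S$, and restrict the resulting homomorphism to $T$. Your explicit case-by-case check (and the monotonicity remark for complete separability) just makes precise what the paper's uniform one-paragraph argument leaves implicit.
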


\begin{proof}
	As $S$ has property $\mathscr{P}$, it has the separability property with respect to $\mathcal{C}$, where $\mathcal{C}$ is the collection of subsets of the type associated with $\mathscr{P}$. 
	Let $X \subseteq T$ be a subset of the relevant type and let $t \in T \! \setminus \! X$. 
	Then $X$ is also a subset of the relevant type in $S$ and $ t \in S \! \setminus \! X$. 
	Then as $S$ has property $\mathscr{P}$, there exists a finite semigroup $U$ and homomorphism $f:S \to U$ such that $f(t) \notin f(S \!\setminus\! X)$. 
	Let $f|_T: T \to U$ be the restriction of $f$ to $T$. Then $f|_T(t) \notin f|_T(T \!\setminus\! X)$ and $T$ has property $\mathscr{P}$.
\end{proof}

In Section 3 we consider when certain groups have our separability properties. 
We consider these groups as semigroups so it is worth noting how the separability conditions vary between the class of groups and the class of semigroups. 

A group $G$ is residually finite within the class of groups if and only if it is residually finite within the class of semigroups. 
This is because every homomorphic image of a group is itself a group. 
Similarly, we can see that a group $G$ is completely separable within the class of groups if and only if it is completely separable within the class of semigroups. 
Furthermore, we can fully classify completely separable groups. 

\begin{lem}
	\label{comp.sep.group}
	A group $G$ is completely separable if and only if it is finite.
\end{lem}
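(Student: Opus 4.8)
The plan is to prove both implications, of which the reverse one is immediate. Recall from the discussion preceding this lemma that every finite semigroup is completely separable, since its profinite topology is discrete; hence if $G$ is finite it is certainly completely separable, and there is nothing further to do in this direction.

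For the forward direction, I would exploit the correspondence between congruences on a group and its normal subgroups. A congruence $\sim$ on a group $G$ is determined by a normal subgroup $N \trianglelefteq G$ via $a \sim b \iff ab^{-1} \in N$; the congruence class of the identity $e$ is precisely $[e]_\sim = N$; and the number of congruence classes equals the index $[G:N]$. In particular, a finite index congruence on $G$ corresponds to a normal subgroup of finite index. This translation is the only point in the argument requiring care, and it rests on the standard fact that the congruence lattice of a group is isomorphic to its lattice of normal subgroups.

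Now suppose $G$ is completely separable. It suffices to apply the definition to a single element, namely the identity $e$: there is a finite index congruence $\sim$ with $[e]_\sim = \{e\}$. Translating through the correspondence above, the associated normal subgroup is $N = [e]_\sim = \{e\}$, and finiteness of the index then yields $\lvert G\rvert = [G : \{e\}] < \infty$. Hence $G$ is finite, completing the proof.

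I do not anticipate a genuine obstacle here. The key observation is that complete separability is a far stronger hypothesis than we need: separating the single element $e$ from its complement already pins down the trivial normal subgroup as a finite index congruence class, which forces $G$ to be finite. The remaining content is purely the standard dictionary between finite index congruences and finite index normal subgroups.
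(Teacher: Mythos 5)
Your proposal is correct and follows essentially the same route as the paper: separate the identity from its complement by a finite index congruence, observe that the class of $e$ is a finite index normal subgroup equal to $\{e\}$, and conclude $G$ is finite. The paper merely leaves the congruence--normal-subgroup dictionary implicit where you spell it out.
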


\begin{proof}
	We have already observed that all finite groups are completely separable. 
	Now suppose that $G$ is completely separable. 
	Then there exists a finite index congruence on $G$ such that $\{e\}$ is a congruence class, where $e$ is the identity element. 
	Then $[G:\{e\}]$ is finite, implying that $G$ is finite.
\end{proof}	

We note that there exist infinite groups which are strongly subsemigroup separable (and hence weakly subsemigroup separable), as Theorem \ref{thm3} demonstrates.

It is also clear that if a group $G$ is strongly subsemigroup separable, then it must also be strongly subgroup separable. 
This is because every subgroup is also a subsemigroup. 
However, if a group is strongly subgroup separable then it may not be strongly subsemigroup separable (or even weakly subsemigroup separable), as the following example demonstrates.
\begin{eg}
	\label{eg1}
	\normalfont Let $\mathbb{Z}$ denote the group of integers under addition. 
	As every non-trivial subgroup of $\mathbb{Z}$ is a normal subgroup of finite index and their intersection is $\{0\}$, it follows that $\mathbb{Z}$ is strongly subgroup separable. 
	Now consider the subsemigroup $\mathbb{N}=\langle 1 \rangle \leq \mathbb{Z}$ and the element $-1 \notin \mathbb{N}$. 
	Let $U$ be a finite semigroup and $\phi: \mathbb{Z} \to U$ be a homomorphism. 
	Then $\phi(\mathbb{Z})$ is a finite cyclic group, generated by $\phi(1)$. 
	Hence $\phi(-1) \in \langle \phi(1) \rangle = \phi(\mathbb{N})$. 
	Therefore $\mathbb{Z}$ is not even weakly subsemigroup separable. 
	Since $\mathbb{Z}$ is residually finite, this shows that the properties of residual finiteness and weak subsemigroup separability are distinct in semigroups, and in particular in finitely generated commutative semigroups. 
\end{eg}

As a consequence of Proposition \ref{lem5} and Example \ref{eg1}, we see that if a group $G$ is weakly subsemigroup separable then it cannot contain a copy of $\mathbb{Z}$. 
In other words, for $G$ to be weakly subsemigroup separable it is necessary for it to be torsion. 
As every subsemigroup of a torsion group is in fact a subgroup, we observe the following.
\begin{prop}
\label{GroupSep}
\leavevmode 
\begin{enumerate}[noitemsep, nolistsep]
	\item[\textup{(1)}] A group $G$ is weakly subsemigroup separable if and only if $G$ is torsion and weakly subgroup separable.
	\item[\textup{(2)}] A group $G$ is strongly subsemigroup separable if and only if $G$ is torsion and strongly subgroup separable.
\end{enumerate}
\end{prop}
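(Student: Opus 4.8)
The plan is to leverage two facts that are essentially already in hand: (i) every weakly subsemigroup separable group is torsion, as noted in the paragraph immediately preceding the statement, and (ii) in a torsion group every subsemigroup is in fact a subgroup. Granting these, both equivalences collapse to translating between separation by finite semigroups and separation by finite groups, and this translation is painless because every homomorphic image of a group is again a group; consequently any finite-semigroup quotient of $G$ is automatically a finite group, so the two notions of ``finite quotient'' coincide for groups.

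First I would record the elementary computation behind fact (ii). If $T$ is a subsemigroup of a torsion group $G$ and $t \in T$ has order $n$, then $e = t^n \in T$ and $t^{-1} = t^{n-1} \in T$, so $T$ is closed under inverses and contains the identity, hence is a subgroup. The same observation shows that for any $X \subseteq G$ the subsemigroup and the subgroup generated by $X$ coincide; in particular, in a torsion group the finitely generated subsemigroups are exactly the finitely generated subgroups.

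Next I would dispatch the forward implications of both parts together. Suppose $G$ is weakly (respectively strongly) subsemigroup separable; then by fact (i) $G$ is torsion, where in the strong case one first invokes Proposition \ref{lem}(2) to pass to weak subsemigroup separability. Given a finitely generated (respectively arbitrary) subgroup $H$ and an element $s \notin H$, I regard $H$ as a subsemigroup and apply subsemigroup separability to obtain a homomorphism $f \colon G \to U$ onto a finite semigroup with $f(s) \notin f(H)$. Since $f(G)$ is a finite group, corestricting $f$ to its image exhibits a separation of $s$ from $H$ within the class of groups, so $G$ is weakly (respectively strongly) subgroup separable. For the converse implications, suppose $G$ is torsion and weakly (respectively strongly) subgroup separable. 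Given a finitely generated (respectively arbitrary) subsemigroup $T$ and $s \notin T$, fact (ii) makes $T$ a subgroup, which in the weak case is finitely generated by the generation remark above; subgroup separability then yields a homomorphism onto a finite group separating $s$ from $T$, and since a finite group is a finite semigroup this is precisely the required semigroup separation.

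The only point demanding genuine care is the bookkeeping in the finitely generated case of part (1): one must verify that ``finitely generated as a subsemigroup'' and ``finitely generated as a subgroup'' agree under the torsion hypothesis, which is exactly what the generation remark in the second paragraph secures. Everything else is a routine transfer resting on the fact that homomorphic images of groups are groups, so I do not anticipate any real obstacle beyond this matching.
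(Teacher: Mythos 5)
Your proposal is correct and follows essentially the same route as the paper, which presents this proposition as an observation resting on exactly your two facts: torsion is forced because a non-torsion group contains a copy of $\mathbb{Z}$ (Example \ref{eg1} plus Proposition \ref{lem5}), and in a torsion group every subsemigroup is a subgroup, with the semigroup/group quotient translation being automatic since homomorphic images of groups are groups. Your explicit check that finitely generated subsemigroups and finitely generated subgroups coincide under the torsion hypothesis is a detail the paper leaves implicit, and it is handled correctly.
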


We will now briefly discuss the situation for abelian groups. 
It is known that a group is residually finite if and only if it is isomorphic to a subdirect product of finite groups, see \cite[Corollary 7.2]{cohn1981universal}. 
It follows that an abelian group is residually finite if and only if it is isomorphic to a subdirect product of finite cyclic groups.

In order to discuss weak subsemigroup separability in abelian groups, we first give the following lemma. 
A semigroup is called \emph{locally finite} if every finitely generated subsemigroup is finite. 

\begin{lem}
\label{locallyfinite}
Let $S$ be a semigroup which is both residually finite and locally finite. 
Then $S$ is weakly subsemigroup separable.
\end{lem}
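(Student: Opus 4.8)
The plan is to exploit the fact that local finiteness turns the finitely generated subsemigroup $T$ into a \emph{finite} object, so that separating $s$ from $T$ reduces to separating $s$ from each of finitely many individual elements, which residual finiteness handles. Concretely, let $T$ be a finitely generated subsemigroup of $S$ and let $s \in S \setminus T$. Since $S$ is locally finite and $T$ is finitely generated, $T$ is finite; write $T = \{t_1, \ldots, t_n\}$. Each $t_i$ is distinct from $s$, so our goal is to find a single finite index congruence $\sim$ on $S$ with $[s]_\sim \neq [t_i]_\sim$ for every $i$.

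The next step uses residual finiteness one element at a time. For each $i \in \{1, \ldots, n\}$, since $s \neq t_i$ and $S$ is residually finite, there exists a finite index congruence $\sim_i$ on $S$ such that $[s]_{\sim_i} \neq [t_i]_{\sim_i}$. I then take the intersection $\sim \,=\, \bigcap_{i=1}^n \sim_i$. The crucial observation is that a finite intersection of finite index congruences is again a finite index congruence: it is clearly a congruence, and its classes are (non-empty) intersections of classes of the $\sim_i$, of which there are at most $\prod_{i=1}^n |S/{\sim_i}|$, a finite number. Finally, if we had $[s]_\sim = [t_i]_\sim$ for some $i$, then in particular $s \sim_i t_i$, contradicting the choice of $\sim_i$; hence $[s]_\sim \neq [t_i]_\sim$ for all $i$, and therefore $[s]_\sim \neq [t]_\sim$ for all $t \in T$. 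This exhibits the required finite index congruence and shows $S$ is weakly subsemigroup separable.

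There is no substantial obstacle here; the argument is a routine combination of the two hypotheses. The only point requiring a moment's care is the closure of finite index congruences under finite intersection, and this is immediate from the bound on the number of classes noted above. It is worth emphasising why \emph{local} finiteness is exactly what is needed: it guarantees $T$ is finite so that the intersection taken is finite, whereas an infinite intersection of finite index congruences need not have finite index, and indeed residual finiteness alone (without local finiteness) does not suffice, as Example~\ref{eg1} shows.
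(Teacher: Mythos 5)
Your argument is correct and is essentially the paper's own proof: both use local finiteness to reduce $T$ to a finite set and then combine the $n$ separating witnesses from residual finiteness, the only cosmetic difference being that you intersect finite index congruences where the paper takes a direct product of finite quotients (these are the same construction viewed through kernels).
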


\begin{proof}
Let $T \leq S$ be finitely generated and $x \notin T$. 
Then as $S$ is locally finite, $T$ is finite, say $T=\{t_1, t_2, \dots, t_n\}$. 
For each $i$, there is a finite semigroup $P_i$ and a homomorphism $\phi_i: S \to P_i$ such that $\phi_i(x) \neq \phi_i(t_i)$.
Then $\phi: S \to P_1 \times P_2 \times \dots \times P_n$ given by $s \mapsto \left(\phi_1(s), \phi_2(s), \dots, \phi_n(s)\right)$ is a homomorphism which separates $x$ from $T$.
\end{proof}

For an abelian group to be weakly subsemigroup separable, it is necessary for it to be residually finite. 
It is also necessary for it to be torsion, as noted above.
Since torsion abelian groups are locally finite, being residually finite and torsion are sufficient conditions for an abelian group to be weakly subsemigroup separable by Lemma \ref{locallyfinite}.

In \cite{MR0274613} Golubov was able to characterise when commutative semigroups are strongly subsemigroup separable. 
We can apply his result to abelian groups. 
For an abelian group $A$ and for a prime $p$, recall that the \emph{$p$-primary component} of $A$ is the set 
\[A_p=\{a \in A \mid o(a)=p^n \text{ for some } n \in \mathbb{N}\}\] 
where $o(a)$ is the order of the element $a$. 
We say that $A_p$ has \emph{finite exponent} if there exists $n \in \mathbb{N}$ such that $o(a) \leq p^n$ for all $a \in A_p$. 
Then Golubov's result, \cite[Theorem 2]{MR0274613}, tells us that an abelian group $A$ is strongly subsemigroup separable if and only if $A$ is torsion and for each prime $p$, the primary $p$-component has finite exponent. 

Finally, Lemma \ref{comp.sep.group} tells us that a group is completely separable if and only if it is finite. 
We summarise all these observations.

\begin{thm}
	\label{thm3}
	Let $A$ be an abelian group. 
	\begin{enumerate}
		\item[\textup{(1)}] $A$ is residually finite if and only if it is isomorphic to a subdirect product of finite cyclic groups.
		\item[\textup{(2)}] $A$ is weakly subsemigroup separable if and only if it is torsion and residually finite.
		\item[\textup{(3)}] $A$ is strongly subsemigroup separable if and only if it is torsion and for each prime $p$, the primary $p$-component has finite exponent.
		\item[\textup{(4)}] $A$ is completely separable if and only if it is finite. 
	\end{enumerate}
\end{thm}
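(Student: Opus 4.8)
The plan is to treat the theorem as a consolidation of the observations and lemmas established immediately above its statement, proving each of the four parts separately by assembling the relevant pieces. Since the substantive work has already been done, the proof is largely a matter of bookkeeping, and I would organise it part by part.

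For (1), I would invoke the standard characterisation that a group is residually finite if and only if it embeds as a subdirect product of finite groups \cite[Corollary 7.2]{cohn1981universal}. For an abelian $A$ every finite factor is a finite abelian group, which by the fundamental theorem decomposes as a direct product of finite cyclic groups; refining the subdirect decomposition through this splitting yields a subdirect product of finite cyclic groups. The converse is immediate, since the coordinate projections of any subdirect product of finite groups separate points and hence witness residual finiteness.

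For (2), the forward implication combines two necessary conditions: weak subsemigroup separability gives residual finiteness by Proposition \ref{lem}(3), and it forces $A$ to be torsion, since an element of infinite order would generate a copy of $\mathbb{N} \leq \mathbb{Z}$ that cannot be separated from its inverse, exactly as in Example \ref{eg1} (equivalently, by Proposition \ref{GroupSep}(1)). For the converse I would use that a torsion abelian group is locally finite, as any finitely generated torsion abelian group is finite, and then apply Lemma \ref{locallyfinite} to conclude that a residually finite torsion abelian group is weakly subsemigroup separable. Part (3) I would settle by directly quoting Golubov's characterisation \cite[Theorem 2]{MR0274613} of strongly subsemigroup separable commutative semigroups, specialised to groups, and part (4) is immediate from Lemma \ref{comp.sep.group}.

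Because every argument of substance is already in hand, there is no genuine obstacle to overcome; the proof is essentially an exercise in citing the correct prior result for each clause. The only external input not proved in the paper is Golubov's result underlying (3), and the one place calling for a short argument rather than a citation is the refinement step in (1), where finite abelian factors must be decomposed into cyclic ones.
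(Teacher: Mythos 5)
Your proposal is correct and matches the paper's treatment: the theorem is stated there explicitly as a summary of the observations immediately preceding it, and each of your four citations (the Cohn subdirect-product characterisation refined via the structure of finite abelian groups, Proposition \ref{lem} with Example \ref{eg1} and Lemma \ref{locallyfinite}, Golubov's theorem, and Lemma \ref{comp.sep.group}) is exactly the ingredient the paper uses for the corresponding part.
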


In the remainder of this section we record some structural results for semigroups that will aid us in our investigations. 
For a semigroup $S$, let $S^1$ denote $S$ with an identity adjoined if necessary. 
\emph{Green's relation $\mathcal{H}$} on $S$ is defined by \[\mathcal{H}=\{(x,y) \mid xS^1=yS^1 \text{ and } S^1x=S^1y\}.\] 
The $\mathcal{H}$ relation is an equivalence relation, and in commutative semigroups it is also a congruence. 
For an element $x \in S$, we denote the $\mathcal{H}$-class of $x$ by $H_x$. 
For an $\mathcal{H}$-class $H$, the following are equivalent: $H$ is a maximal subgroup; $H$ contains an idempotent; the intersection  $H \cap H^2$ is non-empty \cite[Corollary 2.2.6]{howie1995fundamentals}.

In fact, for an $\mathcal{H}$-class $H$, if $H \cap H^n \neq \emptyset$ for any $n \geq 2$ then $H$ is a maximal subgroup. 
This is folklore but we provide a proof for completeness.  
Suppose $h_1,h_2,\dots,h_n,h \in H$ such that $h_1h_2 \dots h_n=h.$ Since $h_1,  h_2,  h_{n-1},  h_{n}$ and $h$ are pairwise $\mathcal{H}$-related there exist $s,  t,  u,  v \in S^1$ such that
\[ hs=h_1,h_nt=h_2,uh=h_n,vh_1=h_{n-1}.\]
Then
\[h\cdot sh_n=h_1h_n, \: h_1h_n \cdot th_3h_4 \dots h_n=h,\]
\[ h_1u \cdot h=h_1h_n, \: h_1h_2\dots h_{n-2}v \cdot h_1h_n=h.\]
Hence $(h,h_1h_n) \in \mathcal{H}$. 
Then $H \cap H^2 \neq \emptyset$ and it follows that $H$ is a group.

It is possible to associate a group to an arbitrary $\mathcal{H}$-class $H$ as follows. 
The \emph{right stabiliser} of $H$ in $S$ is 
\[\Stab(H)=\{s \in S^1 \mid Hs=H\}.\] 
Clearly $\Stab(H)$ is a submonoid of $S^1$. 
We define a congruence $\sigma_H$ on $\Stab(H)$, called the \emph{Sch\"utzenberger congruence of $H$}, by 
\[(x,y) \in \sigma_H \iff hx=hy \text{ for all }h \in H.\] 
Then $\Gamma(H)=\Stab(H)/\sigma_H$ is a group, known as the \emph{Sch\"utzenberger group} of $H$. 
It is known that:
\begin{itemize}
	\item $\Gamma(H)$ acts regularly, i.e., transitively and freely, on $H$;
	\item $|\Gamma(H)|=|H|$;
	\item if $H$ is a group then $\Gamma(H) \cong H$.
\end{itemize}
One could similarly define a group $\Gamma_l(H)$ by considering the left stabiliser of $H$, but it turns out that $\Gamma_l(H) \cong \Gamma(H)$. 
For more on Sch\"utzenberger groups and proofs of the above claims, see \cite[Section 2.3]{Lallement1979}.

In a commutative semigroup $S$, inclusion among principal ideals induces a partial ordering on $\mathcal{H}$ classes: $H_x \leq H_y \text{ if } xS^1 \subseteq yS^1.$
It is easy to see that for any $a,x \in S$ we have $H_{xa} \leq H_x.$ 
From this it follows that there can be at most one minimal $\mathcal{H}$-class and therefore if such an $\mathcal{H}$-class exists, it is the least $\mathcal{H}$-class under this partial ordering.
We refer to such an $\mathcal{H}$-class as \emph{the minimal $\mathcal{H}$-class}.

For Section 4, it will be important to be familiar with the basic structure theory of  commutative semigroups. 
An \emph{archimedean} semigroup is a commutative semigroup $S$ such that for each $a,b \in S$ there exists $n >0$ such that $H_{a^n} \leq H_b$. 
We say that a semigroup $S$ is a \emph{semilattice of semigroups} if for some semilattice $Y$, we can write $S$ as a disjoint union of subsemigroups $S =\bigcup_{\alpha \in Y}S_\alpha$ such that $S_{\alpha}S_{\beta} \subseteq S_{\alpha \beta}$ for all $\alpha,\beta \in Y$. 
In this case we write $S=\mathcal{S}(Y,\{S_{\alpha}\}_{\alpha \in Y})$. 
This leads us to the following structural theorem for commutative semigroups.

\begin{prop}
	\normalfont \cite[Theorem 4.2.2]{grillet1995semigroups} \textit{A commutative semigroup $S$ is a semilattice of archimedean semigroups $\mathcal{S}(Y,\{S_{\alpha}\}_{\alpha \in Y})$. 
	Furthermore, if $S$ is finitely generated then $Y$ is finite.}
\end{prop}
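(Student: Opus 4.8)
The plan is to realise the semilattice decomposition explicitly through the \emph{archimedean equivalence} and then to read off finiteness of $Y$ in the finitely generated case. First I would define a relation $\mathcal{N}$ on $S$ by setting $a\mathrel{\mathcal{N}}b$ if and only if each of $a$ and $b$ divides a power of the other; in the notation of the excerpt (recall that $H_x\leq H_y$ means $x\in yS^1$) this says that there exist $m,n>0$ with $H_{a^m}\leq H_b$ and $H_{b^n}\leq H_a$. Reflexivity and symmetry are immediate, and transitivity follows from the multiplicativity of divisibility: from $a^m\in bS^1$ and $b^p\in cS^1$ one gets $a^{mp}\in cS^1$, and symmetrically. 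The less routine point is that $\mathcal{N}$ is a \emph{congruence}: given $a\mathrel{\mathcal{N}}b$ and any $c\in S$, writing $a^m=bu$ and using commutativity gives $(ac)^m=a^mc^m=(bc)(uc^{m-1})$, so $H_{(ac)^m}\leq H_{bc}$, and symmetrically, whence $ac\mathrel{\mathcal{N}}bc$.

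Next I would show that the quotient $Y=S/\mathcal{N}$ is a semilattice. It is commutative since $S$ is, and every class is idempotent because $a\mathrel{\mathcal{N}}a^2$ (as $a$ divides $a^2$ and $a^2$ divides itself); a commutative idempotent semigroup is exactly a semilattice. Writing $S_\alpha$ for the $\mathcal{N}$-class indexed by $\alpha\in Y$, each $S_\alpha$ is a subsemigroup because $\alpha$ is idempotent, and $S_\alpha S_\beta\subseteq S_{\alpha\beta}$ holds because $\mathcal{N}$ is a congruence, so that $S=\mathcal{S}(Y,\{S_\alpha\}_{\alpha\in Y})$.

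The step I expect to require the most care is verifying that each component $S_\alpha$ is archimedean \emph{as a semigroup in its own right}, that is, that for $a,b\in S_\alpha$ some power of $a$ lies in $bS_\alpha^1$ and not merely in $bS^1$. From $a\mathrel{\mathcal{N}}b$ we obtain $a^n=bc$ for some $c\in S^1$ and some $n>0$, but a priori $c$ need not lie in $S_\alpha$. The resolution is to absorb one extra factor of $a$: since $bc=a^n\in S_\alpha$ and $b\in S_\alpha$, the class $[c]$ satisfies $\alpha[c]=\alpha$, whence $ac\in S_{\alpha[c]}=S_\alpha$; thus $a^{n+1}=b(ac)$ with $ac\in S_\alpha$, giving $H_{a^{n+1}}\leq H_b$ within $S_\alpha$. (The case $c\in S^1\setminus S$ is trivial.)

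Finally, for the finitely generated case, $Y$ is the homomorphic image of $S$ under the quotient map, hence is generated as a semilattice by the finitely many images of a generating set of $S$. A semilattice generated by $k$ elements has at most $2^k-1$ elements, since by commutativity and idempotency each product is determined by the set of generators occurring in it; therefore $Y$ is finite.
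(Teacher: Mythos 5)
Your proof is correct: the paper itself gives no argument for this statement (it is quoted from Grillet, Theorem 4.2.2), and your construction via the mutual-divisibility congruence $\mathcal{N}$, including the careful check that each class is archimedean \emph{as a semigroup in its own right} and the $2^k-1$ bound on a $k$-generated semilattice, is precisely the standard Tamura-style proof of the cited result. Nothing to correct.
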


It follows from the definition that an archimedean semigroup can contain at most one idempotent. 
Hence a finitely generated commutative semigroup contains only finitely many idempotents. 

We call a semigroup with a zero \emph{nilpotent} if every element has a power equal to 0. 
An \emph{ideal extension} of a semigroup $S$ by a semigroup $Q$ is a semigroup $E$ such that $S$ is an ideal of $E$ and the Rees quotient $E/S$ is isomorphic to $Q$. 

The following result provides a characterisation of archimedean semigroups with idempotent. 

\begin{prop}
	\label{prop2}
	\normalfont \cite[Proposition 4.2.3]{grillet1995semigroups} \textit{A commutative semigroup $S$ is archi\-medean with idempotent if and only if $S$ is either a group or an ideal extension of a group by a nilpotent semigroup.}
\end{prop}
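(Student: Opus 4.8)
The plan is to prove both implications directly. The reverse direction is a routine verification, while the forward direction rests on a single observation about how the archimedean condition interacts with the idempotent, which I expect to be the only genuinely non-trivial point.

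For the ($\Leftarrow$) direction I would treat the two cases separately. A group is archimedean because for any $a,b$ we have $a=b(b^{-1}a)$, so $H_a=H_b$ and in particular $H_{a}\le H_b$; its identity is an idempotent. For an ideal extension $S$ of a group $G$ (with identity $e$) by a nilpotent semigroup $N\cong S/G$, the element $e$ is an idempotent of $S$, so I only need to check the archimedean condition. Given $a,b\in S$, nilpotency of $N$ yields powers $a^n,b^m\in G$. If $b\in G$ then $b$ is invertible and $a^n=b(b^{-1}a^n)\in bS$; if $b\notin G$ then $m\ge 2$, and one writes $a^n=b\cdot\bigl(b^{m-1}(b^m)^{-1}a^n\bigr)$, where the bracketed factor lies in $S\cdot G\subseteq G\subseteq S$ and the product collapses to $e a^n=a^n$ since $a^n\in G$. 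In either case $H_{a^n}\le H_b$, so $S$ is archimedean.

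For the ($\Rightarrow$) direction, assume $S$ is commutative and archimedean with its (necessarily unique) idempotent $e$. The heart of the argument is to show that $G:=eS$ is a group. Since $e$ is idempotent, $G$ is a commutative monoid with identity $e$, and I note that $e\in eS=eS^1$, so it remains only to produce inverses. Here the crucial step is to apply the archimedean property to the pair $(e,es)$: some power of $e$, namely $e$ itself, is divisible by $es$, giving $e=est$ for some $t\in S^1$. A short computation then shows $(es)(et)=e^2st=e(st)=est=e$, so $et\in eS$ is an inverse of $es$ and $G$ is a group. As $G$ is a subgroup of $S$ with identity $e$ containing $H_e$ (any $x\in H_e$ satisfies $xS^1=eS^1$, hence $x\in eS$), one in fact has $G=eS=H_e$.

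Finally I would establish that $G$ is an ideal with $S/G$ nilpotent. The ideal property is immediate from commutativity, since $S(eS)=e(SS)\subseteq eS$. For nilpotency, applying the archimedean property to each pair $(a,e)$ gives a power $a^n$ divisible by $e$, so $a^n\in eS^1=eS=G$ and therefore $[a]^n=0$ in the Rees quotient $S/G$; thus $S/G$ is a nilpotent semigroup with zero $[G]$. If $S=G$ then $S$ is a group, and otherwise $S$ is, by construction, an ideal extension of the group $G$ by the nilpotent semigroup $S/G$. The one step I expect to require care is precisely the inverse computation above: the archimedean hypothesis must be invoked with the idempotent $e$ as the element being raised to a power (so that $e$ itself is divisible by $es$), rather than the other way around; once that is spotted, the remainder is bookkeeping.
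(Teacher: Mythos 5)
This proposition is quoted in the paper as a cited result (Grillet, Proposition 4.2.3) with no proof supplied, so there is no in-paper argument to compare against. Your proof is correct and is essentially the standard one: the key step of applying the archimedean condition to the pair $(e,es)$ to obtain $e\in(es)S^1$, and hence an inverse $et$ for $es$ in $eS$, is exactly right, and the remaining verifications (that $eS$ is an ideal, that $S/eS$ is nilpotent, and the converse computation $a^n=b\bigl(b^{m-1}(b^m)^{-1}a^n\bigr)$) all check out.
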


In general, the structure of archimedean semigroups is complex. 
For more about the decomposition of commutative semigroups into archi\-medean subsemigroups, see \cite[Chapter 4]{grillet1995semigroups}.

\section{Sch\"utzenberger Groups}
It is a natural question to ask, for each of the three generalisations of residual finiteness, whether that property is inherited by Sch\"utzenberger groups. 
This is motivated by the following result.

\begin{prop}\normalfont{\cite[Theorem 3.1]{GRAY201421}}
	\label{schutzgroupRF}
	\textit{Every Sch\"utzenberger group of a residually finite semigroup is residually finite.}
\end{prop}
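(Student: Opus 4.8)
The plan is to use the regular action of $\Gamma(H)=\Stab(H)/\sigma_H$ on its $\mathcal{H}$-class $H$ to transport a finite quotient of $S$ witnessing residual finiteness into a finite \emph{group} quotient of $\Gamma(H)$. The main conceptual point to keep in mind throughout is that $\Gamma(H)$ is \emph{not} a quotient of $S$, so one cannot simply restrict a finite quotient of $S$; instead one exploits the fact that $\sigma_H$ is precisely the congruence collapsing the action of $\Stab(H)$ on $H$.

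To begin, I would fix two distinct elements $g_1,g_2\in\Gamma(H)$, with representatives $s_1,s_2\in\Stab(H)$. By the definition of the Sch\"utzenberger congruence, $g_1\neq g_2$ means exactly that $h_0s_1\neq h_0s_2$ for some $h_0\in H$. Since $s_1,s_2\in\Stab(H)$ satisfy $Hs_i=H$, both $h_0s_1$ and $h_0s_2$ lie in $H\subseteq S$, so I have produced a genuine pair of distinct elements of $S$ to be separated. Residual finiteness of $S$ then supplies a finite-index congruence $\rho$ on $S$ with $[h_0s_1]_\rho\neq[h_0s_2]_\rho$; write $\pi\colon S\to S/\rho$ for the quotient map.

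The heart of the argument is to repackage $\rho$ as a transformation representation. I would define a map $\theta\colon\Stab(H)\to\mathcal{T}(\pi(H))$ into the full transformation monoid on the finite set $\pi(H)$ by letting $\theta(s)$ send $\pi(h)\mapsto\pi(hs)$. Three facts then need checking, all routine: $\theta(s)$ is well defined and maps $\pi(H)$ into itself, because $\rho$ is a congruence and $Hs=H$ (so $hs\in H\subseteq S$ and $\pi$ applies); $\theta$ is a monoid homomorphism, with transformations composed left to right; and, the key point, $\theta$ factors through $\sigma_H$, since $(s,s')\in\sigma_H$ forces $hs=hs'$ for every $h\in H$ and hence $\theta(s)=\theta(s')$. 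This induces a homomorphism $\bar\theta\colon\Gamma(H)\to\mathcal{T}(\pi(H))$.

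Finally, since $\bar\theta(\Gamma(H))$ is a homomorphic image of a group sitting inside a finite monoid, it is itself a finite group, so $\bar\theta$ maps $\Gamma(H)$ onto a finite group. It separates $g_1$ from $g_2$, because $\bar\theta(g_1)$ and $\bar\theta(g_2)$ disagree already at the point $\pi(h_0)$ by the choice of $\rho$. As $g_1,g_2$ were arbitrary distinct elements, $\Gamma(H)$ is residually finite. I expect the only genuine obstacle to be the realisation described in the first paragraph: once the action is recognised as the object to work with, the finite quotient of $S$ descends to a finite transformation representation that automatically factors through $\sigma_H$, and everything else is verification.
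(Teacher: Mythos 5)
The paper does not actually prove this proposition; it imports it directly from \cite{GRAY201421}, so there is no internal proof to compare against. Your argument is correct and complete, and it is essentially the argument of the cited source: you turn the finite-index congruence $\rho$ separating $h_0s_1$ from $h_0s_2$ into a finite transformation representation $\Stab(H)\to\mathcal{T}(\pi(H))$, observe that it factors through $\sigma_H$ by the very definition of the Sch\"utzenberger congruence, and note that the image of the group $\Gamma(H)$ inside the finite monoid is a finite group separating $g_1$ from $g_2$ at the point $\pi(h_0)$; all the verifications you flag as routine (well-definedness via $Hs=H$ and $\rho$ being a congruence, left-to-right composition, factoring through $\sigma_H$) do indeed go through.
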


 In this section we show that the properties of complete separability and strong subsemigroup separability are inherited by Sch\"utzenberger groups. 
 By way of contrast, it is not true that every Sch\"utzenberger group of a weakly subsemigroup separable semigroup is weakly subsemigroup separable. 
 However, we are able to give a sufficient condition on the stabiliser of an $\mathcal{H}$-class of a weakly subsemigroup separable semigroup to ensure that the corresponding Sch\"utzenberger group is also weakly subsemigroup separable. 
 We begin by showing that the non-group $\mathcal{H}$-classes of a strongly subsemigroup separable semigroup are finite.

\begin{prop}
	\label{lem4}
	Every non-group $\mathcal{H}$-class of a strongly subsemigroup separable semigroup is finite.
\end{prop}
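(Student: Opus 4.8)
The plan is to prove the statement directly by producing, for a non-group $\mathcal{H}$-class $H$, a \emph{single} finite quotient of $S$ on which $H$ is mapped injectively; then $|H|$ is bounded by the size of a finite semigroup and we are done. If $|H|=1$ there is nothing to prove, so I assume $|H|\geq 2$ and fix $h\in H$.

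The first step is to isolate $h$ from the rest of its $\mathcal{H}$-class using strong separability. Set $T=\langle H\setminus\{h\}\rangle$. Since $H$ is a non-group $\mathcal{H}$-class, the folklore fact recorded above gives $H\cap H^n=\emptyset$ for all $n\geq 2$, so every product of two or more elements of $H$ lies outside $H$; consequently $T\cap H=H\setminus\{h\}$, and in particular $h\notin T$. As $T$ is a subsemigroup and $S$ is strongly subsemigroup separable, there is a finite semigroup $U$ and a homomorphism $f\colon S\to U$ with $f(h)\notin f(T)$. In particular $f(h)\neq f(h')$ for every $h'\in H\setminus\{h\}$; I will call this property $h$ being \emph{isolated}.

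The heart of the argument is to promote the isolation of this one point to injectivity of $f$ on all of $H$, and this is the step I expect to be the main obstacle, since a single application of separability only controls the class of $h$. The idea is to exploit the regular (transitive and free) action of the Schützenberger group $\Gamma(H)$ on $H$ together with the preservation of Green's relation $\mathcal{L}$ under homomorphisms. Suppose $f(a)=f(b)$ for some $a,b\in H$. By transitivity there is $s\in\Stab(H)$ with $as=b$ and $xs\in H$ for all $x\in H$; applying $f$ gives $f(a)f(s)=f(a)$. Because $h$ and $a$ both lie in $H$ they are $\mathcal{L}$-related in $S$, hence $f(h)$ and $f(a)$ are $\mathcal{L}$-related in $U$, so $f(h)=w\,f(a)$ for some $w\in U^1$; then $f(hs)=f(h)f(s)=w\,f(a)f(s)=w\,f(a)=f(h)$. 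Since $hs\in H$ and $h$ is isolated, this forces $hs=h$, so $s$ fixes $h$ under the regular action; by freeness $s$ then acts as the identity on $H$, whence $as=a$ and therefore $a=b$.

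Thus $f$ is injective on $H$, and since $U$ is finite we conclude $|H|=|f(H)|\leq|U|<\infty$. Beyond the definitions, the only inputs I rely on are the folklore identity $H\cap H^n=\emptyset$ for non-group $\mathcal{H}$-classes, the regularity of the Schützenberger action, and the standard (easily verified) fact that homomorphisms preserve $\mathcal{L}$; it is this last fact that lets me transport the isolation of $h$ to an arbitrary colliding pair $a,b$.
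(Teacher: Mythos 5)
Your proof is correct and follows essentially the same route as the paper: both separate $h$ from $T=\langle H\setminus\{h\}\rangle$ (using the folklore fact that $H\cap H^n=\emptyset$ for a non-group $\mathcal{H}$-class) and then use right translation by a stabiliser element to propagate a collision in the finite quotient back to a collision with $h$. The only difference is organisational: the paper argues by contradiction, using pigeonhole on an assumed infinite $H$ to produce the colliding pair, whereas you run the same translation argument directly to show the separating homomorphism is injective on $H$, a marginally sharper packaging of the identical idea.
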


\begin{proof}
	For a contradiction suppose that $S$ is a strongly subsemigroup separable semigroup with an infinite non-group $\mathcal{H}$-class $H$.
	Fix some $h \in H$ and let $T=\langle H\!\setminus\! \{h\}\rangle$. 
	If $h \in T$, then $h \in H^n$ for some $n \geq 2$, which contradicts that $H$ is not a group.
	Therefore $h \notin T$. 
	Let $\sim$ be any finite index congruence on $S$ that separates $h$ from $T$. 
	Then there exist distinct elements $x,y \in H\!\setminus\! \{h\}$ such that $x \sim y$. As $(x, h) \in \mathcal{H}$, there exists some $s \in S$ such that $xs=h$. 
	Now $h\ \sim\ ys.$  
	By Green's Lemma \cite[Lemma 2.2.4]{howie1995fundamentals}, multiplication on the right by $s$ permutes $H$, so $ys \in H\!\setminus\!\{h\} \subseteq T$. 
	Hence $\sim$ does not separate $h$ from $T$, contradicting the strong subsemigroup separability of $S$.
\end{proof}

\begin{cor}
	\label{cor2}
	Every Sch\"utzenberger group of a strongly subsemigroup separable semigroup is itself strongly subsemigroup separable.
\end{cor}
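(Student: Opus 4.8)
The plan is to split the argument into two cases according to whether the $\mathcal{H}$-class $H$ in question is a group $\mathcal{H}$-class. First I would dispose of the non-group case. If $H$ is a non-group $\mathcal{H}$-class of the strongly subsemigroup separable semigroup $S$, then Proposition \ref{lem4} tells us that $H$ is finite. Since $|\Gamma(H)| = |H|$, the Sch\"utzenberger group $\Gamma(H)$ is then finite, and every finite semigroup is completely separable, hence strongly subsemigroup separable by Proposition \ref{lem}.

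Next I would treat the case where $H$ is a group $\mathcal{H}$-class, i.e.\ a maximal subgroup of $S$. Here the key recorded fact is that $\Gamma(H) \cong H$, so it suffices to show that $H$ itself is strongly subsemigroup separable. But $H$, being a subgroup of $S$, is in particular a subsemigroup of $S$, and Proposition \ref{lem5} guarantees that strong subsemigroup separability is inherited by subsemigroups. Thus $H$, and therefore the isomorphic copy $\Gamma(H)$, is strongly subsemigroup separable.

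Combining the two cases gives the result. I do not anticipate a serious obstacle: the entire argument rests on the case distinction together with two facts already available, namely the finiteness of non-group $\mathcal{H}$-classes (Proposition \ref{lem4}) and the isomorphism $\Gamma(H) \cong H$ for group $\mathcal{H}$-classes. The only points needing a moment's care are to confirm that the two cases genuinely exhaust all $\mathcal{H}$-classes, and to note that transporting the property along the isomorphism $\Gamma(H) \cong H$ is legitimate because strong subsemigroup separability is an isomorphism invariant.
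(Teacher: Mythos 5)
Your proof is correct and follows essentially the same route as the paper: the same case split into non-group and group $\mathcal{H}$-classes, using Proposition \ref{lem4} together with $|\Gamma(H)|=|H|$ for the former and $\Gamma(H)\cong H$ together with Proposition \ref{lem5} for the latter. The two cases do exhaust all $\mathcal{H}$-classes and transporting the property along the isomorphism is unproblematic, so there is nothing to add.
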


\begin{proof}
	By Proposition \ref{lem4} every Sch\"utzenberger group of a non-group $\mathcal{H}$-class is finite so is certainly strongly subsemigroup separable. 
	The Sch\"utzen\-berger group of a group $\mathcal{H}$-class $H$ is isomorphic to $H$ so is strongly subsemigroup separable by Proposition \ref{lem5}.
\end{proof}

\begin{cor}
	\label{cor3}
	Every $\mathcal{H}$-class of a completely separable semigroup is finite (and hence each of its Sch\"utzenberger group is completely separable). 
\end{cor}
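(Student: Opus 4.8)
The plan is to show that every $\mathcal{H}$-class is finite by splitting into group and non-group classes and applying the results already established, then to read off the parenthetical statement from the identity $|\Gamma(H)| = |H|$. For a non-group $\mathcal{H}$-class I would invoke the machinery already in hand: by Proposition \ref{lem}(1) a completely separable semigroup is strongly subsemigroup separable, so Proposition \ref{lem4} immediately tells us that every non-group $\mathcal{H}$-class is finite.

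The group case is where the (very short) remaining work lies. A group $\mathcal{H}$-class $H$ is a maximal subgroup of $S$, and in particular a subsemigroup, so by Proposition \ref{lem5} it inherits complete separability from $S$. Being a completely separable group, $H$ is then forced to be finite by Lemma \ref{comp.sep.group}. Combining the two cases, every $\mathcal{H}$-class of $S$ is finite.

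For the bracketed claim, for any $\mathcal{H}$-class $H$ we have $|\Gamma(H)| = |H|$, so each Sch\"utzenberger group $\Gamma(H)$ is finite. Since the profinite topology on a finite semigroup is discrete, every finite semigroup is completely separable, and hence each $\Gamma(H)$ is completely separable, as required.

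I do not expect a genuine obstacle here, as this is a corollary that assembles earlier results; the one point needing care is that the split into cases is essential. A non-group $\mathcal{H}$-class is not a subsemigroup, since it is not closed under multiplication, so Proposition \ref{lem5} cannot be applied to it and one really does need Proposition \ref{lem4} for those classes. If instead one wanted a self-contained argument uniform across all $\mathcal{H}$-classes, I would mimic the proof of Proposition \ref{lem4}: fix $h \in H$, choose a finite-index congruence $\sim$ with $[h]_\sim = \{h\}$, and, assuming $H$ is infinite, find distinct $x, y \in H$ with $x \sim y$; writing $h = xs$ and using Green's Lemma so that right multiplication by $s$ permutes $H$, one obtains $h = xs \sim ys$ with $ys \in H \setminus \{h\}$, contradicting $[h]_\sim = \{h\}$.
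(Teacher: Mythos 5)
Your proposal is correct and follows essentially the same route as the paper: the non-group $\mathcal{H}$-classes are handled via Proposition \ref{lem}(1) and Proposition \ref{lem4}, and the group $\mathcal{H}$-classes via Proposition \ref{lem5} and Lemma \ref{comp.sep.group}, with the parenthetical claim read off from $|\Gamma(H)|=|H|$. Your closing remark that the case split is forced (since a non-group $\mathcal{H}$-class is not a subsemigroup) and your sketched uniform alternative are both sound, but the paper's proof is exactly your two-case argument.
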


\begin{proof}
	As a completely separable semigroup is also strongly subsemigroup separable, every non-group $\mathcal{H}$-class is finite by Proposition \ref{lem4}. 
	By Lemma \ref{comp.sep.group} a group is completely separable if and only if it is finite, and so it follows from Proposition \ref{lem5} that all the group $\mathcal{H}$-classes are finite.
\end{proof}

The analogue of Corollary \ref{cor2} for weak subsemigroup separability does not hold in general, as will be demonstrated in Example \ref{eg5}. 
However, it does hold for commutative semigroups. 
We deduce this from the following result.

\begin{lem}
	\label{lem12}
	Let $S$ be a weakly subsemigroup separable semigroup and let $H$ be a $\mathcal{H}$-class of $S.$ 
	If there exists an element $h \in H$ such that $ah=ha$ for all $a \in \Stab(H)$, then the Sch\"utzenberger group $\Gamma(H)$ is weakly subsemigroup separable.
\end{lem}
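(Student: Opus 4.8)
The plan is to transfer a separation witnessed in $S$ down to $\Gamma(H)$ through the regular action of $\Gamma(H)$ on $H$. Write $h$ for the element supplied by the hypothesis, so that $ah=ha$ for all $a\in\Stab(H)$, and recall that $[a]\in\Gamma(H)$ acts on $H$ by $h'\cdot[a]=h'a$, this action being free and transitive. Consequently $[a]\mapsto h\cdot[a]=ha$ is a bijection $\Gamma(H)\to H$, so for $a,a'\in\Stab(H)$ we have $[a]\neq[a']$ if and only if $ha\neq ha'$. This equivalence is the bridge I will use in both directions.

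First I would dispose of the case where $H$ is a group $\mathcal{H}$-class: then $\Gamma(H)\cong H$ and $H$ is a subsemigroup of $S$, so weak subsemigroup separability passes from $S$ to $H$ by Proposition \ref{lem5}. Hence I assume henceforth that $H$ is a non-group $\mathcal{H}$-class, and I record the two facts I will lean on: $H\cap H^m=\emptyset$ for every $m\geq 2$ (the folklore statement proved before the theorem), and therefore $\Stab(H)\cap H=\emptyset$ (any $w$ in this intersection would satisfy $w^2\in Hw=H$, forcing $H$ to be a group). Now take a finitely generated subsemigroup $K=\langle[a_1],\dots,[a_n]\rangle\leq\Gamma(H)$ and $g=[b]\notin K$, with $a_i,b\in\Stab(H)$, and set $T=\langle a_1,\dots,a_n\rangle\leq\Stab(H)$, so that $K=\{[w]:w\in T\}$. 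By the displayed equivalence, $g\notin K$ means precisely that $hb\neq hw$ for all $w\in T$.

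The heart of the argument is to exhibit a finitely generated subsemigroup of $S$ that contains $hT$ but not $hb$. I would take
\[ Y=\langle\, ha_1,\dots,ha_n,\;a_1,\dots,a_n\,\rangle\leq S, \]
noting that each $ha_i\in H\subseteq S$. (If some $[a_i]$ is the identity of $\Gamma(H)$ its only representative may be an adjoined $1$, but then $ha_i=h$ and that $a_i$ can be omitted as a padding generator, so every generator of $Y$ still lies in $S$.) Using $ah=ha$ repeatedly, every element of $Y$ can be written as $h^mv$, where $v$ is the product of the $a$-parts of the chosen factors and $m\geq 0$ counts the factors of the first kind. Then $hw=(ha_{i_1})a_{i_2}\cdots a_{i_k}\in Y$ gives $hT\subseteq Y$, whereas $hb\notin Y$ follows by splitting on $m$: for $m=0$ the element lies in $T\subseteq\Stab(H)$, which is disjoint from $H\ni hb$; for $m\geq 2$ one has $h^mv=h^{m-1}(hv)\in H^m$, again disjoint from $H$; and for $m=1$ the element is $hv=h\cdot[v]$ with $[v]\in K$, so $hv=hb$ would force $[v]=[b]$ by freeness, i.e. $g\in K$, a contradiction. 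I expect this to be the main obstacle, because it is exactly here that the non-group hypothesis (through $H\cap H^m=\emptyset$) and the commuting element are both indispensable, and because one must pick $Y$ correctly — padding with the bare $a_i$ so that the whole of $hT$, not merely the ``length-matched'' elements $h^{|w|}w$, is captured.

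With $Y$ in hand, weak subsemigroup separability of $S$ yields a finite semigroup $U$ and a homomorphism $f\colon S\to U$ with $f(hb)\notin f(Y)$, hence $f(hb)\neq f(hw)$ for all $w\in T$. I would then define a relation on $\Stab(H)$ by $s\approx s'\iff f(hs)=f(hs')$ and verify that it is a finite-index congruence: finite index because $s\mapsto f(hs)$ takes values in the finite set $U$; a right congruence because $f(hst)=f(hs)f(t)$; and a left congruence because $f(hts)=f(ths)=f(t)f(hs)$, using $ht=th$ — the second essential appearance of the hypothesis. Since $s\,\sigma_H\,s'$ gives $hs=hs'$ and hence $s\approx s'$, we have $\sigma_H\subseteq\approx$, so $\approx$ descends to a finite-index congruence $\bar\approx$ on $\Gamma(H)=\Stab(H)/\sigma_H$. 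Finally, because $f(hb)\neq f(hw)$ for all $w\in T$, the $\bar\approx$-classes of $[b]$ and $[w]$ differ for every $[w]\in K$, so $\bar\approx$ separates $g$ from $K$. As $g$ and $K$ were arbitrary, $\Gamma(H)$ is weakly subsemigroup separable.
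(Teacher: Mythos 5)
Your proof is correct and follows essentially the same route as the paper's: the paper works with $\overline{T}=\langle h, x_1,\dots,x_n\rangle$ in place of your $Y$ and rules out $hu\in\overline{T}$ by exactly your case analysis on the power of $h$, using $H\cap H^m=\emptyset$ for a non-group $\mathcal{H}$-class. The only real difference is the final descent to $\Gamma(H)$: where you pull back an explicit finite-index congruence on $\Stab(H)$ via $s\mapsto f(hs)$ (checking left compatibility with $ht=th$), the paper instead maps $\Gamma(H)$ into the Sch\"utzenberger group of the $\mathcal{H}$-class of $\phi(h)$ in the finite quotient; both mechanisms are valid and of comparable length.
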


\begin{proof}
	If $H$ is a group then $\Gamma(H) \cong H$. 
	Hence $\Gamma(H)$ is weakly subsemigroup separable by Proposition \ref{lem5}.
	
	Now assume that $H$ is not a group. 
	For $x \in \Stab(H)$, we will denote $[x]_{\sigma_H}$ by $[x]$. 
	Let $T=\langle [x_1],[x_2],\dots,[x_n] \rangle \leq \Gamma(H)$ and let $[u] \in \Gamma(H) \!\setminus\! T$. 
	Let $h$ be as in the statement of the lemma.
	Let $\overline{T}=\langle h,x_1,x_2,\dots,x_n \rangle \leq S$.  
	
	First we will show that $hu \notin \overline{T}$. 
	For a contradiction assume that $hu \in \overline{T}$. 
	Then as $hx_j=x_jh$ for $1 \leq j \leq n$, we have $hu=h^it$ for some $t \in \langle x_1,x_2,\dots,x_n \rangle$ and $i \geq 0$. 
	We split into three cases: $i=0$, $i=1$, and $i>1$.
	
	(i) If $i=0$ we have $hu=t \in \Stab(H)$, so $h^2u \in H \cap H^2$, which contradicts that $H$ is not a group.
	
	(ii) If $i=1$ we have $hu=ht$ but $[t] \neq [u]$, contradicting $\Gamma(H)$ acting freely on $H$.
	
	(iii) Finally, assume $i > 1$. As $u, t \in \Stab(H)$, it follows that $h^it \in H \cap H^i$, contradicting that $H$ is not a group.
	
	As $S$ is weakly subsemigroup separable, there exists a finite semigroup $U$ and homomorphism $\phi:S \to U$ such that $\phi(hu) \notin \phi(\overline{T})$. 
	Let $H_{\phi(h)} \subseteq U$ be the $\mathcal{H}$-class of $\phi(h)$. 
	Now $\phi(\Stab(H))\subseteq \Stab(H_{\phi(h)})$. Consider the Sch\"utzenberger group $\Gamma(H_{\phi(h)})$. 
	Then the map $\theta: \Gamma(H) \to \Gamma(H_{\phi(h)})$ given by $\theta([v])=[\phi(v)]$ is a homomorphism. 
	If it were the case that $\theta([u]) \in \theta(T)$, then $\phi(h)\phi(u)=\phi(h)\phi(t)$ for some $t \in \langle x_1, x_2, \dots, x_n \rangle$, contradicting $\phi(hu) \notin \phi(\overline{T})$.
	Hence  $\theta([u]) \notin \theta(T)$. 
	We conclude $\Gamma(H)$ is weakly subsemigroup separable. 
\end{proof}

\begin{cor}\label{lem1}
	Every Sch\"utzenberger group of a weakly subsemigroup separable commutative semigroup is itself weakly subsemigroup separable.
\end{cor}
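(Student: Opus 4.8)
The plan is to observe that this corollary follows immediately from Lemma \ref{lem12}, so the only thing I need to do is verify that the hypothesis of that lemma is automatically satisfied in the commutative setting. Lemma \ref{lem12} requires, for a given $\mathcal{H}$-class $H$, the existence of some element $h \in H$ with $ah = ha$ for all $a \in \Stab(H)$. When $S$ is commutative this condition holds trivially: every pair of elements of $S$ commutes, and the identity adjoined to form $S^1$ commutes with everything, so $ah = ha$ for \emph{every} $h \in H$ and \emph{every} $a \in \Stab(H) \subseteq S^1$. Thus there is always a valid choice of $h$ (indeed any element of $H$ will do).

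Consequently, for any $\mathcal{H}$-class $H$ of a weakly subsemigroup separable commutative semigroup $S$, the hypothesis of Lemma \ref{lem12} is met, and I would invoke the lemma directly to conclude that $\Gamma(H)$ is weakly subsemigroup separable. I do not expect any genuine obstacle here: all of the substantive argument has already been carried out in the proof of Lemma \ref{lem12}, and the sole role of commutativity is to guarantee the existence of the required commuting element $h$. The proof therefore amounts to a single sentence checking the hypothesis and citing the lemma.
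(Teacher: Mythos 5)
Your proposal is correct and matches the paper exactly: the corollary is stated without proof precisely because, in a commutative semigroup, every $h\in H$ commutes with every $a\in\Stab(H)\subseteq S^1$ (including any adjoined identity), so Lemma \ref{lem12} applies immediately. Your one extra remark about the adjoined identity is a worthwhile check and nothing more is needed.
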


Although Sch\"utzenberger groups of weakly subsemigroup separable semigroups need not in general be weakly subsemigroup separable, it is an intriguing open question whether they must be weakly \emph{subgroup} separable.

\begin{op}
Let $S$ be a semigroup and let $H$ be an $\mathcal{H}$-class of $S$.  
If $S$ is weakly subsemigroup separable, is $\Gamma(H)$ weakly subgroup separable?
\end{op}

In the final part of this section we provide some partial solutions to this problem, one of which is utilized in the proof of Proposition \ref{lem10}.

\begin{prop}
\label{last}
	Let $S$ be a semigroup and let $H$ be an $\mathcal{H}$-class of $S.$  
	If $S$ is weakly subsemigroup separable, then $\Gamma(H)$ satisfies the separability property with respect to the collection of all finitely generated abelian subgroups.
\end{prop}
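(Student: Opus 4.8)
The plan is to follow the lifting strategy of the proof of Lemma \ref{lem12}: separate an element of $S$ that lies in $H$ from a suitable finitely generated subsemigroup, and then push the separation down to $\Gamma(H)$. First I would dispose of the case that $H$ is a group, where $\Gamma(H)\cong H$ is itself a subsemigroup of $S$; by Proposition \ref{lem5} it is then weakly subsemigroup separable and so has the separability property with respect to all finitely generated subsemigroups, in particular with respect to finitely generated abelian subgroups (each being finitely generated as a semigroup by its generators together with their inverses). So assume from now on that $H$ is not a group.

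Fix $h\in H$ and a finitely generated abelian subgroup $T=\langle[x_1],\dots,[x_n]\rangle\leq\Gamma(H)$, choosing the generating set closed under inversion so that every element of $T$ is represented by a positive word in the $x_i$; let $[u]\in\Gamma(H)\setminus T$ with $u\in\Stab(H)$. Since $\Gamma(H)$ acts regularly on $H$, it suffices to separate the single element $hu\in H$ from the orbit $P=\{ht:[t]\in T\}\subseteq H$ by a finite-image homomorphism of $S$ and then descend. The abelian hypothesis says exactly that $kx_ix_j=kx_jx_i$ for every $k\in H$, and this lets me reorder the letters of any word $hx_{i_1}\cdots x_{i_m}$: each prefix stays in $H$ because $x_i\in\Stab(H)$, and adjacent letters may be transposed, so by a bubble-sort $ht$ depends only on the multiset of generators used and $P=\{hx_1^{a_1}\cdots x_n^{a_n}\}$. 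Taking $\overline T=\langle h,x_1,\dots,x_n\rangle\leq S$ we have $P\subseteq\overline T$, and the whole problem reduces to proving $hu\notin\overline T$.

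To attack $hu\notin\overline T$ I would analyse an arbitrary word $w$ in $h,x_1,\dots,x_n$ evaluating into $H$. Writing $p$ for the number of occurrences of $h$ and absorbing each block of $x$'s that immediately follows an $h$ into that $h$ (legitimate since $x_i\in\Stab(H)$), one gets $w=\alpha_0h_1\cdots h_p$ with each $h_\ell\in H$ and $\alpha_0\in\langle x_1,\dots,x_n\rangle\cup\{1\}$ the prefix of $x$'s preceding the first $h$. The cases with empty prefix are routine and mirror Lemma \ref{lem12}: if $p=0$ then $w\in\Stab(H)\cap H$, whence $hw\in H\cap H^2$; if the word begins with $h$ and $p\geq2$ then $w\in H\cap H^p$; both contradict that $H$ is not a group via the folklore lemma of Section~2. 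The remaining empty-prefix case $p=1$ gives $w=ht$ with $[t]\in T$, so $w=hu$ would force $[u]\in T$ by freeness of the action. Granting $hu\notin\overline T$, I finish exactly as in Lemma \ref{lem12}: weak subsemigroup separability of $S$ yields a finite $U$ and $\phi\colon S\to U$ with $\phi(hu)\notin\phi(\overline T)\supseteq\phi(P)$; then $\phi(\Stab(H))\subseteq\Stab(H_{\phi(h)})$ induces a homomorphism $\theta\colon\Gamma(H)\to\Gamma(H_{\phi(h)})$, $\theta([v])=[\phi(v)]$, into the finite group $\Gamma(H_{\phi(h)})$, and $\theta([u])\in\theta(T)$ would give $\phi(hu)=\phi(ht)\in\phi(P)$, a contradiction.

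The main obstacle is exactly the case of a \emph{nonempty prefix} $\alpha_0$, where generators of $T$ appear to the left of the remaining $h$. The abelian hypothesis only licenses reordering of the $x_i$ to the right of an element of $H$, that is, it constrains the right action of $\Gamma(H)$, and gives no relation moving an $x_i$ from the left of $h$ to its right; this is precisely why Lemma \ref{lem12} assumes the stronger condition $ah=ha$, which collapses every word to the form $h^it$ and eliminates prefixes. To handle it I would apply Green's Lemma \cite[Lemma 2.2.4]{howie1995fundamentals} to the product $\alpha_0\cdot(h\alpha_1)\in H$ to deduce that $\alpha_0$ lies in the left stabiliser $\Stab_l(H)$, so that left multiplication by $\alpha_0$ is a bijection of $H$ commuting with the regular right action and therefore agrees with right multiplication by the unique $[v]\in\Gamma(H)$ determined by $\alpha_0h=h[v]$; the crux is then to prove that this $[v]$ must itself lie in $T$, forcing the desired contradiction $[u]=[v][\alpha_1]\in T$. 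Establishing $[v]\in T$ is the delicate heart of the argument, and I expect it to require the isomorphism $\Gamma_l(H)\cong\Gamma(H)$ relating the left and right actions, together with a further use of the abelian structure of $T$, or else a more cunning choice of the finitely generated subsemigroup $\overline T$.
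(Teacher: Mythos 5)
Your reduction correctly isolates the obstruction, but the claim on which everything rests --- that $hu\notin\overline{T}$ for $\overline{T}=\langle h,x_1,\dots,x_n\rangle$ --- is not merely unproved; it is false in general, so the gap cannot be filled along the route you sketch. The left action of a representative $x_i\in\Stab(H)$ on $H$ is governed by the left Sch\"utzenberger structure and bears no relation to $[x_i]_{\sigma_H}$. Concretely, take $G=\mathbb{Z}/2\times\mathbb{Z}/2$, let $\psi$ be the coordinate swap, and let $S=G\cup X_G\cup\{0\}$ with $X_G\cup\{0\}$ null, $x_g\cdot t=x_{gt}$ and $t\cdot x_g=x_{g\psi(t)}$ (associativity is a routine check using commutativity of $G$). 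Then $X_G$ is a non-group $\mathcal{H}$-class with $\Gamma(X_G)\cong G$, and for $T=\langle(1,0)\rangle$, $h=x_e$, $u=(0,1)\notin T$ we get $hu=x_{(0,1)}=(1,0)\cdot h\in\langle h,(1,0)\rangle=\overline{T}$. So an arbitrary positive word $\alpha_0$ in the $x_i$ standing to the left of $h$ can translate $h$ to $h[v]$ with $[v]$ outside $T$, exactly as you feared, and the element $[v]$ you need to control genuinely need not lie in $T$. (This $S$ is finite, so it does not contradict the proposition --- only your proposed intermediate lemma.)

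The paper's proof is built precisely to evade this. Writing $G=\langle a_1,\dots,a_n\rangle G_0$ with the $a_i$ non-torsion and $G_0$ the torsion subgroup, and choosing $x_i,y_i\in\Stab(H)$ with $[x_i]=a_i$, $[y_i]=a_i^{-1}$, the first (and hardest) step proves that suitable powers satisfy $x_i^{\alpha(i)}h=hy_i^{\beta(i)}$. This is where weak subsemigroup separability of $S$ is used in an essential and non-obvious way: one shows $hy_i$ \emph{cannot be separated} from $\langle h,x_i\rangle$ by any finite index congruence (via a pigeonhole on the powers $hx_i^k$ and the identity $hy=hx^ky^{k+1}$), hence lies in it, and then a second round of the same trick plus a careful case analysis using that $H$ is not a group pins the form down to $x_i^{m}h=hy_i^{j}$. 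Only the powers $x_i^{\alpha(i)}$ --- which by this relation do map $h$ back into the orbit $hU$ --- are admitted as "pure stabiliser" generators of the finitely generated subsemigroup $T$; the remaining generators are of the form $hw$ for $w$ in an explicit finite subset of $\Stab(H)$ chosen so that $T\cap H$ is exactly $\{hu\mid u\in U\}$. In other words, the "more cunning choice of $\overline{T}$" you anticipate is not optional but the substance of the proof, and it requires the commutation relations of step (1), which your proposal does not supply. Your group case and your final descent to $\Gamma(H_{\phi(h)})$ agree with the paper and are fine.
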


\begin{proof}
If $H$ is a group, then $\Gamma(H)\cong H$ is a subgroup of $S$ and hence weakly subsemigroup separable by Proposition \ref{lem5}.  
It then certainly satisfies the separability property with respect to the collection of all finitely generated abelian subgroups.

Suppose that $H$ is not a group.  
Let $G$ be a finitely generated abelian subgroup of $\Gamma(H)$ and let $b\in\Gamma(H)\!\setminus\!G.$
Now, $G$ is generated (as a group) by some set $\{a_1, \dots, a_n\}\cup G_0,$ where each $a_i$ is non-torsion and $G_0$ is the finite torsion subgroup of $G$.
For $s \in \Stab(H)$, we shall just write $[s]$ for $[s]_{\sigma_H}.$
Let $U$ denote the subsemigroup $$\{u\in\text{Stab}(H) \mid [u]\in G\}$$ of $\text{Stab}(H).$
For each $i\in\{1, \dots, n\},$ select $x_i, y_i\in U$ such that $[x_i]=a_i$ and $[y_i]=a_i^{-1}.$  
Also, fix an element $h\in H.$

The remainder of this proof is organised as follows.
\begin{enumerate}
	\item We show that for each $i\in\{1, \dots, n\},$ there exist $\alpha(i), \beta(i)\in\mathbb{N}$ such that $x_i^{\alpha(i)}h=hy_i^{\beta(i)}.$
	\item We build a finitely generated subsemigroup $T$ of $S$ such that $T\cap H=\{hu \mid u\in U\}.$
	\item We find a finite group $K$ and a homomorphism $\theta : \Gamma(H)\to K$ such that $\theta(b)\notin \theta(G).$
\end{enumerate}

(1)  Let $i\in\{1, \dots, n\},$ and write $x=x_i$, $y=y_i.$
We claim that $hy\in\langle h, x\rangle.$  
Since $S$ is weakly subsemigroup separable, it suffices to show that $hy$ cannot be separated from $\langle h, x\rangle$ by a finite index congruence.  
Suppose that $\sim$ is a finite index congruence on $S$. 
Then there exist $k, \ell\in\mathbb{N}$ with $k<\ell$ such that $hx^k \sim hx^{\ell}.$  
The elements $[x]$ and $[y]$ are inverses of each other in the group $\Gamma(H)$ and so, by the definition of $\sigma_H$, $hy=hx^ky^{k+1}$.
Hence
$$hy=hx^ky^{k+1}\sim hx^{\ell}y^{k+1}=hx^{\ell-k-1}\in\langle h, x\rangle,$$
which completes the proof of the claim. 
Since $x\in\text{Stab}(H)$ and $H$ is not a group, we cannot have $hy\in\langle x\rangle.$  
Post-multiplying $hy$ by an appropriate power of $y,$ we deduce that $hy^j=uh$ for some $j\in\mathbb{N}$ and $u\in\langle h, x\rangle.$ 

Now we claim that $hx^j\in\langle h, u\rangle$. 
Again, it suffices to show that $hx^j$ cannot be separated from $\langle h, u \rangle$ in a finite index congruence.
Suppose that $\sim$ is a finite index congruence on $S.$ 
Then there exist $k, \ell\in\mathbb{N}$ with $k<\ell$ such that $hy^{jk} \sim hy^{j\ell}.$ 
Recalling that $[x]$ and $[y]$ are mutually inverse and that $hy^j=uh$, we deduce 
$$hx^j=hy^{jk}x^{j(k+1)} \sim hy^{j\ell}x^{j(k+1)}=hy^{j(\ell-k-1)}=u^{\ell-k-1}h\in\langle h, u\rangle,$$
establishing the claim.
Since $uh\in H,$ we cannot have $hx^j\in\langle u\rangle,$ for then $(hx^j)h\in H\cap H^2,$ contradicting that $H$ is not a group.
It follows that $hx^j=w_1hw_2$ for some $w_1\in\langle u\rangle^1$ and $w_2\in\langle h, u\rangle^1\subseteq\langle h, x\rangle^1.$  
Now $w_1h=hy^{jk}$ for some $k\in\mathbb{N}_0,$ so $hx^j\neq w_1h$ as $[x]$ has infinite order in $\Gamma(H),$ and hence $w_2\neq 1.$ 
We claim we cannot write $w_2$ as $sht$ for some $s\in\langle x\rangle^1$ and $t\in\langle h, x\rangle^1.$  
Indeed, if we could, then since $w_1h\in H$ and $x\in\text{Stab}(H),$ we would have $hx^j\in H\cap H^n$ for some $n\geq 2,$ contradicting that $H$ is not a group. 
Hence the claim holds. 
We must then have $w_2\in\langle u\rangle.$  
But $u\in \langle h, x\rangle,$ so we conclude that $u=x^m$ for some $m\in\mathbb{N}.$  
Now set $\alpha(i)=m$ and $\beta(i)=j.$
We fix $\alpha(i)$ and $\beta(i)$ for the remainder of this proof.

(2)  For each $i\in\{1, \dots, n\}$ let $m_i=\max(\alpha(i), \beta(i)).$
For each $g\in G_0,$ select $u_g\in U$ such that $[u_g]=g.$
We define a finite set 
$$W=\{x_1^{j_1}\dots x_n^{j_n} \mid 0\leq j_i\leq m_i-1\text{ for }1\leq i\leq n\}\cup\{u_g \mid g\in G_0\} \subseteq U.$$
Let $X=\{x_i^{\alpha(i)} \mid 1\leq i\leq n\},$ and let $T$ be the subsemigroup of $S$ generated by
$$Z=X\cup\{hw \mid w\in W\}.$$
Note that $U \cap Z =X$ and $H \cap Z=\{hw \mid w\in W\}$.
We prove that $T\cap H=\{hu \mid u\in U\}.$

First, let $h^{\prime}\in T\cap H.$  Then $h^{\prime}=z_1\dots z_k$ for some $z_j\in Z.$  
If every $z_j\in X,$ then $h^{\prime}\in\text{Stab}(H),$ contradicting that $H$ is not a group.  
Therefore, there exists $j$ minimal such that $z_j=hw$ for some $w\in W.$   
Then for each $i<j,$ we have $z_i\in X$ and hence $z_ih\in hU$, as $xh \in hU$ for all $x \in X$ by (1).
So we deduce that
$h^{\prime}=hw^{\prime}z_{j+1}\dots z_k$ for some $w^{\prime}\in U.$
Let $u=w^{\prime}z_{j+1}\dots z_k.$
We shall show that $u \in U$.
Suppose that $H \cap \{z_{j+1}, z_{j+2}, \dots, z_k\}=\{z_{i_1}, \dots, z_{i_m}\}$ where $j+1\leq i_1<\dots<i_m\leq k.$
%Suppose that $z_{i_1}, \dots, z_{i_m}\in H$ where $j+1\leq i_1<\dots<i_m\leq k.$ 
Let $h_{\ell}=z_{i_{\ell}}\dots z_{i_{\ell+1}-1}$ for $\ell\in\{1, \dots, m-1\},$ and let $h_m=z_{i_m}\dots z_k.$  Then $h_{\ell}\in H$ for each $\ell\in\{1, \dots, m\}.$  
But then
$$h^{\prime}=(hw^{\prime}z_{j+1}\dots z_{i_1-1})h_1\dots h_m\in H\cap H^{m+1},$$ which contradicts that $H$ is not a group.  
We conclude that $z_i\in X$ for every $i\in\{j+1, \dots, k\}.$ 
It follows that $u\in U$ and hence $h^{\prime}\in\{hu \mid u\in U\}.$

For the reverse containment, let $u\in U.$  Since $G$ is abelian, we have $[u]=a_1^{k_1}\dots a_n^{k_n}c$ for some $k_i\in\mathbb{Z}$ and $c\in G_0.$
Consider $i\in\{1, \dots, n\}.$  
If $k_i\geq 0,$ let $p_i\in\mathbb{N}_0$ and $r_i\in\{0, \dots, \alpha(i)-1\}$ be such that $k_i=p_i\alpha(i)+r_i,$ and let $q_i=s_i=0.$
If $k_i<0,$ let $q_i\in\mathbb{N}$ and $s_i\in\{0, \dots, \beta(i)-1\}$ be such that $k_i=-q_i\beta(i)+s_i,$ and let $p_i=r_i=0.$  Now let $t_i=\max(r_i, s_i).$
It follows that $$hu=\bigl(x_1^{\alpha(1)}\bigr)^{q_1}\dots\bigl(x_n^{\alpha(n)}\bigr)^{q_n}\bigl(hx_1^{t_1}\dots x_n^{t_n}u_c\bigr)\bigl(x_1^{\alpha(1)}\bigr)^{p_1}\dots\bigl(x_n^{\alpha(n)}\bigr)^{p_n},$$ 
so $hu\in T\cap H,$ and hence $T\cap H=\{hu \mid u\in U\}.$

(3)  Choose $v\in\text{Stab}(H)$ such that $[v]=b.$  
Then $hv\notin T$ by (2). 
Since $S$ is weakly subsemigroup separable, there exists a finite semigroup $P$ and a homomorphism $\phi : S\to P$ such that $\phi(hv)\notin\phi(T).$  
Let $H_{\phi(h)}$ denote the $\mathcal{H}$-class of $\phi(h),$ and let $K$ be the finite group $\Gamma(H_{\phi(H)})$.  
As in the proof of Lemma \ref{lem12}, the map $\theta: \Gamma(H)\to K,$ given by $\theta([t])=[\phi(t)],$ is a homomorphism such that $\theta(b)\notin \theta(G),$ as required.
\end{proof}

\begin{cor}
\label{abelianSgroup}
	Let $S$ be a semigroup and let $H$ be an $\mathcal{H}$-class of $S.$  
	If $S$ is weakly subsemigroup separable and $\Gamma(H)$ is abelian, then $\Gamma(H)$ is weakly subgroup separable.
\end{cor}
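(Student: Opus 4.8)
The plan is to derive this corollary essentially immediately from Proposition \ref{last}. Recall that $\Gamma(H)$ is weakly subgroup separable precisely when, for every finitely generated subgroup $G \leq \Gamma(H)$ and every element $b \in \Gamma(H) \setminus G$, there is a finite group $K$ and a homomorphism $\theta : \Gamma(H) \to K$ with $\theta(b) \notin \theta(G)$. The key point is that the hypothesis ``$\Gamma(H)$ is abelian'' collapses the relevant collection of subgroups down to exactly the one handled by Proposition \ref{last}.

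First I would observe that, since $\Gamma(H)$ is abelian, every subgroup of $\Gamma(H)$ is abelian. In particular, every finitely generated subgroup $G \leq \Gamma(H)$ is a finitely generated \emph{abelian} subgroup, so the collection of finitely generated subgroups of $\Gamma(H)$ coincides with the collection of finitely generated abelian subgroups. Then, given such a $G$ and an element $b \in \Gamma(H) \setminus G$, I would simply invoke Proposition \ref{last}, which asserts that $\Gamma(H)$ has the separability property with respect to the collection of all finitely generated abelian subgroups. This directly supplies a finite group $K$ and a homomorphism $\theta : \Gamma(H) \to K$ such that $\theta(b) \notin \theta(G)$. As $G$ and $b$ were arbitrary, this is precisely the statement that $\Gamma(H)$ is weakly subgroup separable, completing the argument.

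There is no genuine obstacle to overcome here: all of the difficulty has already been absorbed into Proposition \ref{last}, whose proof builds the separating finite group explicitly as a Sch\"utzenberger group of a finite quotient of $S$. The only subtlety worth flagging is that the separation delivered by Proposition \ref{last} is achieved through a finite \emph{group} $K$ rather than merely a finite semigroup, so the resulting property is genuinely the group-theoretic notion of weak subgroup separability and not simply weak subsemigroup separability of $\Gamma(H)$.
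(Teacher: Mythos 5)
Your proposal is correct and is exactly the derivation the paper intends: the corollary is stated without proof immediately after Proposition \ref{last} precisely because, when $\Gamma(H)$ is abelian, the finitely generated subgroups coincide with the finitely generated abelian subgroups, so Proposition \ref{last} applies verbatim. Your remark that the separating quotient is a genuine finite group (a Sch\"utzenberger group of a finite semigroup) is a correct and worthwhile observation.
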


Groups satisfying the separability property with respect to the collection of all cyclic subgroups are known in the literature as {\em cyclic subgroup separable groups} or {\em $\Pi_c$ groups}.  
Such groups have received considerable attention; see for instance \cite{burillo, kim, Sokolov, Wong}.

From Proposition \ref{last} we immediately deduce:

\begin{cor}
	Let $S$ be a semigroup and let $H$ be an $\mathcal{H}$-class of $S$.  If $S$ is weakly subsemigroup separable, then $\Gamma(H)$ is cyclic subgroup separable.
\end{cor}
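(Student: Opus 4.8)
The plan is to recognise this as a direct specialisation of Proposition \ref{last}, obtained by restricting to a smaller collection of test-subgroups. First I would record the trivial but essential observation that every cyclic subgroup $\langle g\rangle$ of $\Gamma(H)$ is in particular a finitely generated abelian subgroup: it is generated by the single element $g$, hence finitely generated, and every cyclic group is abelian. Consequently the collection of all cyclic subgroups of $\Gamma(H)$ is contained in the collection of all finitely generated abelian subgroups.

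Next I would invoke the general monotonicity of separability under shrinking the collection of subsets: if an algebra has the separability property with respect to a collection $\mathcal{S}$ of subsets, then it has the separability property with respect to any subcollection $\mathcal{S}'\subseteq\mathcal{S}$. The reason is immediate from the definition---to separate an element $b$ from a subset $Y\in\mathcal{S}'$ one simply views $Y$ as a member of $\mathcal{S}$ and applies the assumed separability property. Since $S$ is weakly subsemigroup separable, Proposition \ref{last} tells us that $\Gamma(H)$ has the separability property with respect to all finitely generated abelian subgroups; restricting along the containment above to the subcollection of cyclic subgroups yields precisely that $\Gamma(H)$ is cyclic subgroup separable.

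There is no genuine obstacle here, which is exactly why the result is phrased as a corollary: the entire content is the two elementary remarks that a cyclic group is finitely generated and abelian, and that separability passes to subcollections of test-subsets. The only point requiring any care at all is to state these two facts cleanly so that the appeal to Proposition \ref{last} is unambiguous.
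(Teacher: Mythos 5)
Your proposal is correct and matches the paper exactly: the authors state that the corollary follows immediately from Proposition \ref{last}, precisely because every cyclic subgroup is a finitely generated abelian subgroup and separability with respect to a collection of subsets restricts to any subcollection. Nothing further is needed.
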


In Section 6 we will return to Sch\"utzenberger groups and consider the following question: if all the Sch\"utzenberger groups of a semigroup have a separability property, does the semigroup itself have that property? 

\section{Finitely Generated Commutative Semigroups}
In this section we give a characterisation of our separability properties in finitely generated commutative semigroups. 
We first note that finitely generated abelian groups are strongly \emph{subgroup} separable.
In fact, Mal'cev proved, more generally, that every polycyclic-by-finite group is strongly subgroup separable \cite{Malcev}.  
However, when it comes to finitely generated commutative semigroups, Example \ref{eg1} shows that there exist finitely generated commutative semigroups which are not even weakly subsemigroup separable.
The question of when a finitely generated commutative semigroup is completely separable or strongly subsemigroup separable was considered by Kublanovski{\v \i} and Lesohin in \cite{MR541122}. 
We briefly outline their setup and results, without giving proofs.

Let $S$ be a finitely generated commutative semigroup with finite generating set $A$. 
For $s \in S$, let $C_s=A \cap \Stab(H_s)$. 
Then $C_s$ is finite and can be empty. 
We denote $|C_s|$ by $k_s$. 
Then $\langle C_s \rangle^1=\Stab(H_s)$. 
Consider the free commutative monoid  $\mathbb{N}_0^{k_s}$ on $k_s$ generators. 
There is a canonical homomorphism $\phi: \mathbb{N}_0^{k_s} \to \Stab(H_s).$ We note that $\Stab(s)$, the point stabiliser of $s$, is a submonoid of $\Stab(H_s)$. 
Let $W_s=\phi^{-1}(\Stab(s)) \leq \mathbb{N}_0^{k_s}$ be the pre-image of $\Stab(s)$. 
We can view $\mathbb{N}_0^{k_s}$ as a submonoid of the free abelian group $\mathbb{Z}^{k_s}$. 
Consider the subgroup $G_s\leq \mathbb{Z}^{k_s} $ generated by $W_s$. 
As $G_s$ is a subgroup of $\mathbb{Z}^{k_s}$, we have $G_s \cong \mathbb{Z}^{m_s}$ for some $m_s \leq  k_s$.  
Using this, Kublanovski{\u \i } and Lesohin were able to characterise when $S$ is strongly subsemigroup separable as follows.

\begin{thm} \normalfont \cite[Theorem 1]{MR541122}
	\label{m=k}
	\textit{A finitely generated commutative semigroup $S$ is strongly subsemigroup separable if and only if $m_s=k_s$ for all $s \in S$.}
\end{thm}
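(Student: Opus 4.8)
The plan is to translate the arithmetic condition $m_s=k_s$ into the finiteness of the Sch\"utzenberger group $\Gamma(H_s)$, and then to prove that a finitely generated commutative semigroup is strongly subsemigroup separable precisely when all of its $\mathcal{H}$-classes are finite. For the translation I would compose $\phi\colon\mathbb{N}_0^{k_s}\to\Stab(H_s)$ with the quotient map $\Stab(H_s)\to\Gamma(H_s)$ to obtain a surjection $\psi\colon\mathbb{N}_0^{k_s}\to\Gamma(H_s)$, and then, since $\Gamma(H_s)$ is a group, extend $\psi$ to a group homomorphism $\bar\psi\colon\mathbb{Z}^{k_s}\to\Gamma(H_s)$. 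The key observation is that $\Stab(s)$ is exactly the preimage of the identity of $\Gamma(H_s)$ (this is where regularity of the Sch\"utzenberger action is used: $st=s$ forces $[t]_{\sigma_{H_s}}=1$), from which $W_s=\mathbb{N}_0^{k_s}\cap\ker\bar\psi$ and $\Gamma(H_s)\cong\mathbb{Z}^{k_s}/\ker\bar\psi$. Since $G_s=\langle W_s\rangle\leq\ker\bar\psi$, we always have $m_s\leq\operatorname{rank}(\ker\bar\psi)\leq k_s$; conversely, if $\ker\bar\psi$ has full rank then it contains $d\mathbb{Z}^{k_s}$ for some $d\in\mathbb{N}$, so each $de_i$ (with $e_i$ a standard basis vector) lies in $W_s$ and $G_s$ has full rank too. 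Hence $m_s=k_s$ if and only if $\ker\bar\psi$ is finite index, if and only if $\Gamma(H_s)$ and thus $H_s$ (as $|\Gamma(H_s)|=|H_s|$) is finite. This reduces the theorem to the statement that $S$ is strongly subsemigroup separable if and only if every $\mathcal{H}$-class of $S$ is finite.

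For the forward implication I would argue as follows. Every non-group $\mathcal{H}$-class is finite by Proposition \ref{lem4}. For a group $\mathcal{H}$-class $H$, note that $H$ is a subsemigroup, hence strongly subsemigroup separable by Proposition \ref{lem5}, and therefore torsion, since a weakly subsemigroup separable group cannot contain $\mathbb{Z}$ (compare Proposition \ref{GroupSep}). On the other hand $H\cong\Gamma(H)$ is a quotient of $\mathbb{Z}^{k_s}$ and so is finitely generated abelian, and a finitely generated torsion abelian group is finite. Thus all $\mathcal{H}$-classes are finite, which by the translation is exactly $m_s=k_s$ for every $s$.

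The reverse implication --- that finiteness of all $\mathcal{H}$-classes forces strong subsemigroup separability --- is the main obstacle, and I would in fact aim for the stronger conclusion of complete separability (which implies strong subsemigroup separability by Proposition \ref{lem}). The structural facts I would use are that $S$, being finitely generated commutative, is residually finite (classical), and that $\mathcal{H}$ is a congruence on $S$, so that $\bar S=S/\mathcal{H}$ is a finitely generated commutative semigroup that is $\mathcal{H}$-trivial. Granting that every $\mathcal{H}$-trivial finitely generated commutative semigroup is completely separable, one finishes by isolating a given $s$ in two stages: a finite quotient of $\bar S$ isolates the class $H_s$, whose full preimage in $S$ is the finite set $H_s$, and residual finiteness then separates $s$ from the finitely many other elements of $H_s$; the meet of the resulting congruences gives $[s]=\{s\}$.

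The remaining technical heart is therefore the base case: every $\mathcal{H}$-trivial finitely generated commutative semigroup is completely separable. Here I would induct on the finite semilattice $Y$ of archimedean components. Peeling off a maximal component $S_\alpha$ (note that $S\!\setminus\!S_\alpha$ is an ideal, so $S_\alpha$ with a zero adjoined is a Rees quotient of $S$) reduces matters to the archimedean $\mathcal{H}$-trivial pieces together with the smaller ideal; by Proposition \ref{prop2} a piece containing an idempotent is, under $\mathcal{H}$-triviality, nilpotent and hence finite, while the idempotent-free pieces must be treated directly. The delicate point, and where I expect the real work to lie, is recombining separating congruences across this ideal/Rees-quotient splitting, since complete separability is not automatically inherited through ideal extensions; controlling this via Rees congruences by suitably large ideals is the crux of the argument.
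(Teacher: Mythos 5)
Your translation of the condition $m_s=k_s$ into finiteness of $\Gamma(H_s)$ (equivalently of $H_s$, since $|\Gamma(H_s)|=|H_s|$) is correct, and so is your forward direction (Proposition \ref{lem4} for non-group classes, torsion plus finitely generated abelian for group classes). Your reduction of the converse to the claim that every $\mathcal{H}$-trivial finitely generated commutative semigroup is completely separable is also sound: $S/\mathcal{H}$ is indeed $\mathcal{H}$-trivial because the ordering on $\mathcal{H}$-classes of a commutative semigroup is antisymmetric, and the recombination with residual finiteness over the finitely many elements of $H_s$ works. The genuine gap is that this last claim is exactly where all the difficulty of the theorem is concentrated, and you do not prove it: it is the special case $|H|=1$ of the implication (4)$\Rightarrow$(1) of Theorem \ref{thm1}, and the general case reduces to it by precisely the argument you give, so you have reduced the theorem to an unproven statement of essentially the same strength. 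Worse, the strategy you sketch for it --- peel off a maximal archimedean component and recombine across the resulting ideal extension --- runs into the two obstacles it would need to overcome: the idempotent-free archimedean components are not ``treated directly'' by anything you say (their structure is genuinely complicated, cf.\ the remark after Proposition \ref{prop2}), and complete separability of an ideal $I$ and of the Rees quotient $S/I$ does not imply complete separability of $S$, even for $\mathcal{H}$-trivial commutative semigroups (Example \ref{eg2} and the remark following it). You flag this recombination problem yourself as the crux, but flagging it is not solving it.

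For comparison, the paper does not reprove Theorem \ref{m=k} (it is quoted from Kublanovski{\u \i} and Lesohin), but its proof of the equivalent implication (4)$\Rightarrow$(1) of Theorem \ref{thm1} avoids any global induction on archimedean components: it fixes a single element $h$, passes to the Rees quotient by $I(H_h)$ so that $H_h$ becomes the unique minimal non-zero $\mathcal{H}$-class, and then writes down an explicit finite-index congruence, generated by pairs $(x_i^{n},x_i^{n+|H|})$ for generators $x_i$ of $\Stab(H)$, which isolates $h$; the exponent $n$ is extracted via Dickson's Lemma from finite generating sets of the ideals $I_u=\{w \mid u\phi(w)\in H\}$ of a free commutative monoid. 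Some construction of this kind is what is missing from your argument; everything before that point is a correct and tidy reformulation, not yet a proof.
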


From the proof of this result they obtained two corollaries, summarised as follows: 

\begin{cor} \normalfont \cite[Corollaries 2 and 3]{MR541122}
	\label{cor5}
	\textit{For a finitely generated commutative semigroup $S$ the following are equivalent:
	\begin{enumerate}
		\item[\textup{(1)}] $S$ is completely separable;
		\item[\textup{(2)}] $S$ is strongly subsemigroup separable;
		\item[\textup{(3)}] if $a,b \in S$ such that $a \in b^nS$ for all $n \in \mathbb{N}$, then there exists $m \in \mathbb{N}$ such that $a=b^ma$.
	\end{enumerate}}
\end{cor}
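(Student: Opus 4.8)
The plan is to prove the three equivalences as a triangle, routing everything through the single geometric condition that \emph{every $\mathcal{H}$-class of $S$ is finite}. The implication $(1)\Rightarrow(2)$ is immediate from Proposition \ref{lem}(1). To close the triangle I would establish: (A) $(2)$ is equivalent to every $\mathcal{H}$-class being finite; (B) $(3)$ implies every $\mathcal{H}$-class is finite; (C) finiteness of all $\mathcal{H}$-classes implies $(3)$; and (D) finiteness of all $\mathcal{H}$-classes implies $(1)$. Then $(2)\Rightarrow(3)$ follows from (A) and (C), while $(3)\Rightarrow(1)$ follows from (B) and (D), giving $(1)\Rightarrow(2)\Rightarrow(3)\Rightarrow(1)$. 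The engine for (A) and (B) is Theorem \ref{m=k}.

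For (A) I would reinterpret the integers $k_s$ and $m_s$ via the Sch\"utzenberger group $\Gamma(H_s)$. Since $\Gamma(H_s)$ acts freely on $H_s$, the point stabiliser $\Stab(s)$ is exactly the $\sigma_{H_s}$-class of the identity inside $\Stab(H_s)$, so $W_s=\phi^{-1}(\Stab(s))$ is the preimage of the identity under the composite monoid homomorphism $\pi\colon\mathbb{N}_0^{k_s}\to\Stab(H_s)\to\Gamma(H_s)$. As $\Gamma(H_s)$ is a group, $\pi$ extends to a surjective group homomorphism $\overline{\pi}\colon\mathbb{Z}^{k_s}\to\Gamma(H_s)$ with $G_s=\langle W_s\rangle\leq\ker\overline{\pi}$. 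Hence $m_s=k_s$ (that is, $G_s$ has finite index in $\mathbb{Z}^{k_s}$) forces $\ker\overline{\pi}$ to have finite index, so $\Gamma(H_s)$ is finite and $|H_s|=|\Gamma(H_s)|<\infty$; conversely, if $\Gamma(H_s)$ is finite then each generator $e_i$ satisfies $\overline{\pi}(e_i)^{N}=1$ for some $N$, so $Ne_i\in W_s\subseteq G_s$ and $G_s$ already has finite index. Thus $m_s=k_s$ for all $s$ is equivalent to every $\mathcal{H}$-class being finite, and Theorem \ref{m=k} yields (A).

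For (B) I would argue directly that $(3)$ forces $m_s=k_s$ for every $s$. Fix $s$ and a generator $c_i\in C_s=A\cap\Stab(H_s)$. Since $c_i\in\Stab(H_s)$ we have $H_sc_i=H_s$, so by Green's Lemma multiplication by $c_i$ permutes $H_s$; choosing preimages of $s$ repeatedly gives $s\in c_i^{\,n}S$ for all $n$. Applying $(3)$ with $a=s$ and $b=c_i$ produces $m_i$ with $s=c_i^{\,m_i}s$, i.e.\ $c_i^{\,m_i}\in\Stab(s)$, i.e.\ $m_ie_i\in W_s\subseteq G_s$. As this holds for every $i$, $G_s$ contains $\bigoplus_i m_i\mathbb{Z}e_i$ and hence has finite index, so $m_s=k_s$; by Theorem \ref{m=k} and (A) every $\mathcal{H}$-class is finite. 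For (C) the crux is the lemma: if all $\mathcal{H}$-classes are finite and $a\in b^{\,n}S$ for all $n$, then some power $b^{\,k}$ lies in a maximal subgroup $H_e$ with $ea=a$; finiteness of $H_e$ then forces $b^{\,kr}=e$ and hence $a=b^{\,kr}a$. Here I would use the semilattice decomposition into archimedean components, localise the orbit $\{ab^{\,k}\}$ inside the component $S_\alpha$ of $a$ (with $\alpha\le\beta$, where $b\in S_\beta$), and split on whether $\langle b\rangle$ contains an idempotent; the case where $b$ generates an idempotent-free monogenic semigroup is handled through Proposition \ref{prop2} and the structure theory of archimedean semigroups without idempotent.

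For (D) I would separate a fixed $s$ from $S\!\setminus\!\{s\}$ by combining residual finiteness of $S$ (a finitely generated commutative semigroup is residually finite) to split $s$ off from the finitely many other members of the finite class $H_s$, with a Rees-quotient argument based on the principal ideal $sS^1=\{x:H_x\le H_s\}$ and the finiteness of the semilattice $Y$ to separate $s$ from all remaining elements; a direct product of these finitely many finite quotients then gives a finite-index congruence with $[s]_\sim=\{s\}$. The main obstacle is precisely this pair of structural steps (C) and (D): upgrading finiteness of all $\mathcal{H}$-classes to discreteness of the profinite topology, and proving the crux lemma that infinite divisibility of $a$ by $b$ drives a power of $b$ into a finite maximal subgroup fixing $a$. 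Both hinge on controlling the motion of the powers $b^{\,n}$ through the $\mathcal{H}$-class order, and it is here that finite generation (finiteness of $Y$ and of the subgroups $H_e$) together with Propositions \ref{prop2} and \ref{lem4} do the real work; the reinterpretations (A) and (B) are then bookkeeping.
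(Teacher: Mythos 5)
First, a point of orientation: the paper does not prove this corollary at all --- it is quoted from Kublanovski{\u\i} and Lesohin ``without giving proofs''; the closest the paper comes is its independent Theorem \ref{thm1}, which establishes $(1)\Leftrightarrow(2)\Leftrightarrow(\text{all }\mathcal{H}\text{-classes finite})$ but never touches condition (3). Your architecture --- routing everything through finiteness of $\mathcal{H}$-classes --- is exactly the paper's, and your steps (A) and (B) are correct and genuinely nice: the reinterpretation of $m_s=k_s$ as finiteness of $\Gamma(H_s)$ via the extension $\overline{\pi}\colon\mathbb{Z}^{k_s}\to\Gamma(H_s)$ is sound (in the commutative setting $\Stab(s)$ is indeed the $\sigma_{H_s}$-class of the identity), and the derivation of $m_ie_i\in W_s$ from condition (3) works.

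The gaps are in (C) and (D). Your ``crux lemma'' for (C) is false as stated: take $S=\mathbb{N}\cup\{z\}$ with $z$ a zero adjoined to $(\mathbb{N},+)$, $a=z$ and $b=1$. Every $\mathcal{H}$-class is a singleton and $a\in b^nS$ for all $n$, yet no power of $b$ lies in a maximal subgroup (the only idempotent is $z$), even though $a=b^ma$ holds trivially. So the idempotent-free branch is not a degenerate afterthought to be waved at via ``the structure theory of archimedean semigroups without idempotent'' (and Proposition \ref{prop2}, which you cite for it, concerns archimedean semigroups \emph{with} idempotent); it is the generic case. The correct mechanism avoids idempotents entirely: from $a\in b^nS$ for all $n$ one gets $b^n\notin I(H_a)$ for all $n$, so after passing to $S/I(H_a)$ Corollary \ref{arch} (or Lemma \ref{lem2}) puts $b$ in $\Stab(H_a)$; then $[b]_{\sigma_{H_a}}$ has finite order $r$ in the finite group $\Gamma(H_a)$, whence $ab^r=a$. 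For (D), your sketch does not contain an argument: residual finiteness separates $s$ from the finitely many other elements of $H_s$, and the Rees quotient by $I(H_s)$ collapses everything below $s$, but this leaves the infinitely many elements $x$ with $H_x\geq H_s$, $x\neq s$ (e.g.\ the whole of $\Stab(H_s)$ acting on $s$), and neither finiteness of $Y$ nor a finite direct product of quotients addresses them. This is precisely the hard direction $(4)\Rightarrow(1)$ of Theorem \ref{thm1}, where the paper needs Dickson's Lemma on ideals of $\FC_{\overline{X}}$ and the explicit finite-index congruence generated by the pairs $(x_i^n, x_i^{n+|H|})$; some such construction is unavoidable, and your proposal defers it rather than supplying it.
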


We enhance the result of Corollary \ref{cor5} by showing that for a finitely generated commutative semigroup, weak subsemigroup separability is also equivalent to complete separability. 
We also provide a new characterisation in terms of $\mathcal{H}$-classes.
This characterisation removes the need for the parameters $k_s$ and $m_s$ of Theorem \ref{m=k}. 
The proof we provide is independent of the work of Kublanovski{\u \i } and Lesohin, although the reader may note parallels between the methods used.

We now state the main result of this paper.

\begin{thm}
	\label{thm1}
	Let $S$ be a finitely generated commutative semigroup. Then the following are equivalent:
	\begin{enumerate}
		\item[\textup{(1)}] $S$ is completely separable;
		\item[\textup{(2)}] $S$ is strongly subsemigroup separable;
		\item[\textup{(3)}] $S$ is weakly subsemigroup separable;
		\item[\textup{(4)}] every $\mathcal{H}$-class of $S$ is finite.
	\end{enumerate}
\end{thm}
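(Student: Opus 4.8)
The plan is to establish the cycle $(1)\Rightarrow(2)\Rightarrow(3)\Rightarrow(4)\Rightarrow(1)$. The first two implications are given by Proposition~\ref{lem}, so only $(3)\Rightarrow(4)$ and $(4)\Rightarrow(1)$ require work. I would prove $(3)\Rightarrow(4)$ as follows. Fix an $\mathcal{H}$-class $H$ and note first that $\Stab(H)$ is finitely generated: writing $S=\langle A\rangle$ with $A$ finite, any factorisation $s=a_1\cdots a_k$ into generators of an element $s\in\Stab(H)$ gives, for a fixed $h\in H$, a descending chain $H=H_h\geq H_{ha_1}\geq\cdots\geq H_{ha_1\cdots a_k}=H$, forcing every $a_i$ into $\Stab(H)$; hence $\Stab(H)=\langle A\cap\Stab(H)\rangle^1$. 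Therefore $\Gamma(H)=\Stab(H)/\sigma_H$, being a group that is a homomorphic image of a finitely generated commutative monoid, is a finitely generated abelian group. By Corollary~\ref{lem1} it is weakly subsemigroup separable, so by Proposition~\ref{GroupSep} it is torsion; a finitely generated torsion abelian group is finite, and thus $|H|=|\Gamma(H)|$ is finite.

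For $(4)\Rightarrow(1)$ I would first reduce complete separability to the claim that for each $s\in S$ there is a finite-index congruence $\theta$ with $[s]_\theta$ finite. Indeed, finitely generated commutative semigroups are residually finite (a classical fact), so once $[s]_\theta=\{s,t_1,\dots,t_r\}$ is finite one separates $s$ from each $t_i$ by a finite-index congruence and intersects these with $\theta$. To produce such a $\theta$, consider the divisor set $\mathrm{Div}(s)=\{t\in S:s\in tS^1\}$. Since $t\mid s$ is equivalent to $H_t\geq H_s$, this set is precisely the union of the $\mathcal{H}$-classes lying above $H_s$, each of which is finite by (4). The non-divisors form an ideal $J$ (if $t\nmid s$ then $tx\nmid s$), so whenever only finitely many $\mathcal{H}$-classes lie above $H_s$ the set $\mathrm{Div}(s)$ is finite, $J$ is cofinite, and the Rees congruence $\sim_J$ is already finite-index with $[s]_{\sim_J}=\{s\}$.

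The remaining, and genuinely hard, case is when infinitely many $\mathcal{H}$-classes lie above $H_s$, so that $s$ has infinitely many divisors; this really occurs, for instance in $\langle a,b\mid ab=a\rangle$, where every $b^{j}$ divides $a$. Here no cofinite Rees ideal can isolate $s$, and the source of the extra divisors is that $\Stab(H_s)=\langle A\cap\Stab(H_s)\rangle$ is infinite while, by (4), it acts on the finite class $H_s$ through the finite group $\Gamma(H_s)$. The main obstacle is to convert this finite action into an honest finite-index congruence on $S$: one must collapse the infinite family of divisor classes above $H_s$ into finitely many, compatibly with multiplication by the remaining generators, while keeping $s$ isolated. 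This is precisely where the analysis of $\Stab(H_s)$ via the free commutative monoid $\mathbb{N}_0^{k_s}$ and the subgroup $G_s\cong\mathbb{Z}^{m_s}$ of Kublanovski{\u \i} and Lesohin enters (the equality $m_s=k_s$ encoding finiteness of $\Gamma(H_s)$); I would build the required congruence by truncating the ``free'' directions of $\Stab(H_s)$ and factoring its action through $\Gamma(H_s)$, and then check directly that the quotient is finite and that the class of $s$ is finite.
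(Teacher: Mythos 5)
Your handling of $(1)\Rightarrow(2)\Rightarrow(3)$ and of $(3)\Rightarrow(4)$ is correct and is essentially the argument in the paper: you show $\Stab(H)$ is generated by the generators of $S$ that it contains (equivalently, its complement is an ideal), deduce that $\Gamma(H)$ is a finitely generated abelian group, and then use Corollary~\ref{lem1} together with the fact that a weakly subsemigroup separable group is torsion to conclude $\Gamma(H)$, and hence $H$, is finite. Your preliminary reduction for $(4)\Rightarrow(1)$ is also sound: it suffices to find a finite index congruence in which the class of $s$ is finite, and the case where only finitely many $\mathcal{H}$-classes lie above $H_s$ is dispatched by a cofinite Rees ideal.

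However, in the remaining case --- infinitely many $\mathcal{H}$-classes above $H_s$ --- your proposal stops at exactly the point where the real content of the theorem lies, and this is a genuine gap. Saying you would ``truncate the free directions of $\Stab(H_s)$ and factor its action through $\Gamma(H_s)$, and then check directly that the quotient is finite and the class of $s$ is finite'' names the goal but supplies neither the congruence nor the verification. The difficulty is that an arbitrary truncation of $\Stab(H_s)$ (e.g.\ imposing $x^n = x^{n+|H_s|}$ on generators $x$ of $\Stab(H_s)$ for some $n$) can collapse $s$ onto elements \emph{outside} $H_s$: applying a relation in the ``shortening'' direction replaces $s = tx^{n+|H_s|}$ by $tx^{n}$, and one must prove that $tx^{n}$ still lies in $H_s$ before the finiteness of $\Gamma(H_s)$ can be used to pull it back to $s$. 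The paper achieves this by first passing to the Rees quotient by $I(H_s)$, splitting into the cases where $H_s$ is or is not a group, encoding the set of words over the stabiliser generators that return a given element to $H_s$ as an ideal of a free commutative monoid, invoking Dickson's Lemma to choose the truncation threshold $n$ larger than every exponent occurring in a finite ideal generating set, and only then verifying combinatorially that a single application of a defining pair fixes $h$. None of this is routine, and your appeal to the Kublanovski{\u\i}--Lesohin parameters $k_s$, $m_s$ does not substitute for it (indeed the paper's stated aim is to give a proof independent of that machinery). As written, $(4)\Rightarrow(1)$ is a plan rather than a proof.
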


In order to prove Theorem \ref{thm1}, we first establish some notation and prove some preliminary results. 

For an $\mathcal{H}$-class $H$ of a finitely generated commutative semigroup $S$ define 
\[I(H)=\bigcup\{H_s \mid s \in S, H_s \ngeq H\}.\]
Note that $I(H)$ is non-empty if and only if $H$ is not the minimal $\mathcal{H}$-class, in which case $I(H)$ is an ideal. 
In the proofs of the results of this section, when dealing with a non-minimal $\mathcal{H}$-class $H$ we shall often pass to the Rees quotient $S/I(H)$. 
The following observations are needed to justify this strategy.

Let $H$ be a non-minimal $\mathcal{H}$-class, and denote $I(H)$ by $I$.  
Recall that $S/I$ consists of a zero element, namely $I$, and singleton sets $\{s\}$ where $s\in S\!\setminus\!I$.  
For an $\mathcal{H}$-class $H^{\prime}$ of $S$, let $H^{\prime}_I=\{[h]_I \mid h \in H^{\prime}\}$ denote the image of $H^{\prime}$ in $S/I$. 
The following statements can easily be verified.
\begin{itemize}
	\item For an $\mathcal{H}$-class $H^{\prime}$ of $S$, the set $H^{\prime}_I$ is an $\mathcal{H}$-class of $S/I$.
	\item For an $\mathcal{H}$-class $H^{\prime}$ of $S$, we have that $H^{\prime} \geq H$ if and only if $H^{\prime}_I \geq H_I$.
	\item The set $H_I$ is the unique minimal non-zero $\mathcal{H}$-class in $S/I$.
	\item For $x \in S$, we have that $x \in \Stab(H)$ if and only $[x]_{I} \in \Stab(H_{I})$.
\end{itemize}

\begin{lem}
	\label{lem2}
	Let $S$ be a finitely generated commutative semigroup and let $H$ be an $\mathcal{H}$-class. 
	Fix $s \in S$ with $H_s \geq H$. 
	Then \[\Stab(H)=\{x \in S^1 \mid H_{sx^n} \geq H \text{ for all } n \in \mathbb{N}\}.\]
\end{lem}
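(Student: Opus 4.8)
The plan is to prove the two inclusions separately, after reformulating everything in terms of principal ideals. Fix a representative $h \in H$. Since $\mathcal{H}$-classes are compared via their principal ideals, for any $y \in S$ the condition $H_y \geq H$ is equivalent to $h \in yS^1$; in particular $H_s \geq H$ says precisely $h \in sS^1$, and the set on the right-hand side of the claimed identity is exactly $\{x \in S^1 \mid h \in sx^nS^1 \text{ for all } n \in \mathbb{N}\}$. Throughout one may assume $x \in S$, the case $x = 1$ being immediate for both inclusions (since $1 \in \Stab(H)$ and $H_{s\cdot 1^n} = H_s \geq H$).

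For the inclusion $\Stab(H) \subseteq \{\,\cdot\,\}$, let $x \in \Stab(H)$, so that $Hx = H$. By Green's Lemma, right multiplication by $x^n$ permutes $H$ for each $n \in \mathbb{N}$, so one may choose $h'' \in H$ with $h''x^n = h$. Since $H_{h''} = H \leq H_s$ we have $h'' \in sS^1$, say $h'' = sc$ with $c \in S^1$, and commutativity gives $h = h''x^n = sx^nc \in sx^nS^1$. Hence $H_{sx^n} \geq H$ for every $n$, which is what the right-hand set requires.

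The reverse inclusion is the substantial part. Suppose $h \in sx^nS^1$ for all $n \in \mathbb{N}$; the goal is $x \in \Stab(H)$, i.e. $hx \in H$. Since $H_{hx} \leq H$ holds automatically (as $hx \in hS^1$), it suffices to establish the reverse relation $h \in hxS^1$. The method is to pass to the free commutative monoid: realise $S^1$ as a homomorphic image $\pi\colon \mathbb{N}_0^k \to S^1$ of the free commutative monoid on a finite generating set, and fix lifts $\sigma,\xi$ of $s,x$. For each $n$ the hypothesis yields $h = sx^nt_n$ with $t_n \in S^1$; lifting $t_n$ to $\tau_n \in \mathbb{N}_0^k$ produces vectors $\mu_n := \sigma + n\xi + \tau_n \in \pi^{-1}(h)$. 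The key step is to apply Dickson's Lemma to the \emph{witness sequence} $(\tau_n)_{n \in \mathbb{N}}$ to extract indices $n_1 < n_2$ with $\tau_{n_1} \leq \tau_{n_2}$ in the componentwise order. Then $d := \mu_{n_2} - \mu_{n_1} = (n_2 - n_1)\xi + (\tau_{n_2} - \tau_{n_1}) \geq \xi$, because $n_2 - n_1 \geq 1$ and both summands are nonnegative, while $\pi(\mu_{n_1}) = \pi(\mu_{n_2}) = h$ forces $h\,\pi(d) = h$. Writing $d = \xi + d'$ with $d' \geq 0$, one concludes $h = h\,\pi(d) = (hx)\,\pi(d') \in hxS^1$, whence $H_{hx} = H$ and $x \in \Stab(H)$.

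The main obstacle is exactly this reverse inclusion, and it is instructive to see why the obvious approach fails: one would like the descending chain $H_s \geq H_{sx} \geq H_{sx^2} \geq \cdots$ to stabilise and then transfer a stabiliser of the limiting $\mathcal{H}$-class down to $H$, but this chain can descend strictly forever (for instance in $\langle a,b \mid ab = a\rangle$ with $s = x = b$), so no single class $H_{sx^N}$ is available. The device that circumvents this is the choice of a subsequence along which the witnesses $\tau_n$ are nondecreasing: this is what guarantees that the stabilising relation $h\,\pi(d) = h$ produced by Dickson's Lemma has $d$ divisible by $x$, which is precisely the feature needed to upgrade $h\,\pi(d) = h$ to the required $h \in hxS^1$.
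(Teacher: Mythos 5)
Your proof is correct, and it takes a genuinely different route from the paper's. The paper first passes to the Rees quotient $S/I(H)$ so that $H$ becomes the unique minimal non-zero $\mathcal{H}$-class; the inclusion $\Stab(H)\subseteq\{\cdot\}$ is then a short contradiction argument with the zero, and the reverse inclusion is proved by invoking residual finiteness of finitely generated commutative semigroups (Mal'cev/Lallement): if $hx=0$, then any finite index congruence identifies some $sx^m$ with $sx^n$ and hence identifies $h$ with $0$, which is impossible. You avoid both the quotient reduction and the appeal to residual finiteness: your forward inclusion is a direct computation with principal ideals via Green's Lemma, and your reverse inclusion lifts the witnesses $t_n$ of $h\in sx^nS^1$ to the free commutative monoid and extracts, via Dickson's Lemma, a relation $h=h\pi(d)$ with $d$ divisible by a lift of $x$, giving $h\in hxS^1$ directly. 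The trade-off is that the paper's argument is shorter given that residual finiteness of finitely generated commutative semigroups is already cited elsewhere in the paper, whereas yours is more self-contained and elementary, using only Dickson's Lemma (which the paper also uses, later, in the proof of the main theorem); your closing remark correctly identifies why the naive ``stabilising chain'' argument fails and why the monotone choice of witnesses is the essential device. One small step you pass over quickly is the final implication $hx\in H\Rightarrow Hx=H$; this is Green's Lemma again (in a commutative semigroup $\mathcal{H}=\mathcal{R}$, so $\rho_x$ maps $H$ bijectively onto $H_{hx}=H$), and is worth a sentence.
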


\begin{proof}
	First we assume that $H$ is the minimal $\mathcal{H}$-class in $S$. 
	In this case $S^1=\Stab(H)$ and $H_{sx^n} \geq H$ for all $s \in S$, $x \in S^1$ and $n \in \mathbb{N}$. 
	Hence the result follows. 
	
	Now assume that $H$ is a non-minimal $\mathcal{H}$-class. 
	By noticing that $I(H)\cap \Stab(H)=\emptyset$  and 
	\[I(H) \cap \{x \in S^1 \mid H_{sx^n} \geq H \text{ for all } n \in \mathbb{N}\} =\emptyset,\] and factoring out by $I(H)$, we may assume that $H$ is the unique minimal non-zero $\mathcal{H}$-class.
	
	Let $x \in \Stab(H)$. 
	For a contradiction assume that there exists $n \in \mathbb{N}$ such that $H_{sx^n}\ngeq H$. 
	Then, as $H$ is the minimal non-zero $\mathcal{H}$-class, we have $sx^n=0$. 
	As $H_s \geq H$, there exists $t \in S^1$ such that $st \in H$. 
	Then \[0=sx^nt=stx^n \in Hx^n=H,\] which is a contradiction. 
	
	Now assume that $H_{sx^n} \geq H$ for all $n \in \mathbb{N}$. 
	Fix $h \in H$. Assume for a contradiction that $x \notin \Stab(H)$. 
	Then $hx=0$. 
	As $S$ is finitely generated and commutative, it is residually finite (see \cite[Theorem 3]{Lallement1971}). 
	Let $\sim$ be an arbitrary finite index congruence on $S$. Then there exist $m,n \in \mathbb{N}$, with $m < n$, such that $sx^m \sim sx^n$. 
	As $H_{sx^m} \geq H$, there exists $t \in S^1$ such that $sx^mt=h$. 
	Then \[h=sx^mt \sim sx^nt=sx^mtx^{n-m}=hx^{n-m}=0.\] 
	As $\sim$ is arbitrary, we have shown we cannot separate $h$ and 0 in a finite quotient. 
	This contradicts $S$ being residually finite.
\end{proof}

\begin{cor}
\label{arch}
	Let $S$ be a finitely generated commutative semigroup and let $H$ be a non-minimal $\mathcal{H}$-class. 
	Let $I=I(H)$. 
	If $A$ is an archimedean component in $S/I$ not containing the zero element, then $A \subseteq \Stab(H_{I})$.
\end{cor}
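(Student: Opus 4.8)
The plan is to work entirely inside the Rees quotient $\bar S = S/I$, which is again finitely generated and commutative, and to apply Lemma \ref{lem2} to its least non-zero $\mathcal{H}$-class $H_I$. First I would record the facts collected just before the statement: the non-zero $\mathcal{H}$-classes of $\bar S$ are exactly the images $H'_I$ of the $\mathcal{H}$-classes $H' \geq H$ of $S$, and the ordering is preserved, so $H_I$ is not merely minimal but the \emph{least} non-zero $\mathcal{H}$-class. Consequently every non-zero $t \in \bar S$ satisfies $H_t \geq H_I$, a fact I will use repeatedly.

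The crucial idea is the choice of base point when invoking Lemma \ref{lem2}. Fix an archimedean component $A$ of $\bar S$ with $0 \notin A$ and let $a \in A$. Rather than testing the stabiliser condition against a point of $H_I$, I would take an arbitrary $\bar s \in A$ as the base point $s$ in Lemma \ref{lem2}. This is legitimate since $0 \notin A$ forces $\bar s \neq 0$, whence $H_{\bar s} \geq H_I$. Lemma \ref{lem2}, applied to $\bar S$ and $H_I$, then states that $a \in \Stab(H_I)$ exactly when $H_{\bar s a^n} \geq H_I$ for all $n \in \mathbb{N}$.

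The verification of this condition is now immediate and is where the hypothesis $0 \notin A$ does all the work: as $A$ is a subsemigroup and $\bar s, a \in A$, every product $\bar s a^n$ lies in $A$ and is therefore non-zero, so $H_{\bar s a^n} \geq H_I$ by the first paragraph. Hence $a \in \Stab(H_I)$, and since $a \in A$ was arbitrary, $A \subseteq \Stab(H_I)$.

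I expect the only genuine subtlety — and the reason a naive argument stalls — to be the placement of this base point. If one instead fixed some $h \in H_I$ and tried to establish $H_{h a^n} \geq H_I$ directly, one would be forced to exclude $h a^n = 0$, which is delicate precisely when $H_I$ lies in the archimedean component of $0$ (the case where $H_I$ is a non-group $\mathcal{H}$-class). Selecting the base point inside the same $0$-free component $A$ as $a$ avoids this completely, because closure of $A$ under multiplication keeps every test product away from $0$. Note that the archimedean nature of $A$ is not otherwise needed: the proof uses only that a component other than that of $0$ is a subsemigroup not containing $0$.
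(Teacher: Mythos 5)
Your argument is correct and is essentially the paper's own proof: the paper simply takes the base point in Lemma \ref{lem2} to be $s=a$ itself, so that $H_{a\cdot a^n}\geq H_I$ follows from $a^{n+1}\in A$ being non-zero, which is the same device as your choice of an arbitrary $\bar s\in A$. Nothing further is needed.
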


\begin{proof}
Let $a \in A$. 
As $A$ is a subsemigroup we have $a^n \in A$ for all $n \in \mathbb{N}$. 
So $H_{a \cdot a^n} \geq H_{I}$ for all $n \in \mathbb{N}$. 
Hence $a \in \Stab(H_{I})$ by Lemma \ref{lem2}.
\end{proof}

Now we are ready to prove Theorem \ref{thm1}.

\begin{proof}[Proof of Theorem \ref{thm1}]
	
	From Proposition \ref{lem} we know that (1) implies (2) and (2) implies (3). 
	We only need to show that (3) implies (4) and that (4) implies (1).
	
	(3) $\Rightarrow$ (4). 
	For a contradiction assume that $S$ is weakly subsemigroup separable but has an infinite $\mathcal{H}$-class $H$. 
	Then the Sch\"utzenberger group $\Gamma(H)$ must also be infinite. 
	As $H_{xa} \leq H_x$ for all $x,a \in S$, the complement of $\Stab(H)$ is an ideal of $S$. 
 	It follows that if $X$ is a finite generating set for $S$, then $\Stab(H)$ is generated by $\Stab(H) \cap X$ and so is also finitely generated.
	As $\Gamma(H)$ is a quotient of a finitely generated commutative monoid, it is a finitely generated abelian group. 
	As $\Gamma(H)$ is infinite, it must contain a subgroup isomorphic to $\mathbb{Z}$. 
	So $\Gamma(H)$ is not weakly subsemigroup separable by Example \ref{eg1} and Proposition \ref{lem5}. 
	However, this contradicts Corollary \ref{lem1}, so there cannot be an infinite $\mathcal{H}$-class.
	
	(4) $\Rightarrow$ (1). 
	Let $h \in S$. 
	We shall show that there exists a finite index congruence on $S$ such that the congruence class of $h$ is a singleton. 
	For $s \in S$, let $H_s$ denote the $\mathcal{H}$-class of $s$ and $A_s$ the archimedean component of $s$. 
	For convenience, we will use $H$ to denote $H_h$ and $\sigma$ to denote the Sch\"utzenberger congruence $\sigma_H$.
	Let $I=I(H).$
	
	We split into two cases.
	
	\textbf{Case 1}. 
	The first case is that $H$ is not a group. 
	In particular, $H$ is not the minimal $\mathcal{H}$-class. 
	Factoring out $I$, we may assume that $H$ is the unique minimal non-zero $\mathcal{H}$-class in $S$.
	As $H$ is not a group, then $H^2=\{0\}$. 
	So $A_h=A_0$ and $A_h$ is a nilpotent semigroup. 
	It follows from Corollary \ref{arch} that $S^1$ is the disjoint union of $\Stab(H)$ and $A_h$.
	Consider any finite generating set for $S$, and write it as $X\cup Y$, where $X\subseteq\Stab(H)$ and $Y\subseteq A_h$. 
	Then $\langle X \rangle \subseteq \Stab(H)$ and, as $A_h$ is non-empty, it must be the case that $Y$ is non-empty. 
	Note that $U= \langle Y \rangle$ is finite as $A_h$ is nilpotent. 
	We may assume that $X$ is non-empty, for otherwise $S=U$ is a finite semigroup and hence certainly completely separable.
	
Let $X=\{x_1, x_2, \dots, x_m\}$ and let $\overline{X}=\{\overline{x}_1, \overline{x}_2, \dots, \overline{x}_m\}$ be disjoint from $X$.
Let $\FC_{\overline{X}}$ denote the free commutative monoid on $\overline{X}$. 
Let $\phi: \FC_{\overline{X}} \to \Stab(H)$ be the unique extension to a homomorphism of the map given by $\overline{x_i} \to x_i$.
For $u \in U$ define
\[I_u=\{w \in \FC_{\overline{X}} \mid u\phi(w)\in H\}.\] 
Suppose that $w\in I_u$ and $z\in \FC_{\overline{X}}$. 
Then since $u\phi(w)\in H$ and $\phi(z)\in\text{Stab}(H)$, we deduce that $u\phi(wz)=\left(u\phi(w)\right)\phi(z)\in H$. 
Thus, if $I_u$ is non-empty, then it is an ideal if $\FC_{\overline{X}}$.
	
The monoid $\FC_{\overline{X}}$ is isomorphic to $\mathbb{N}_0^{|X|}$. 
Ideals of $\mathbb{N}_0^{|X|}$ are upward closed sets under the component-wise ordering on tuples. 
It is well known that this partially ordered set has no infinite antichains (Dickson's Lemma). 
From this follows the well-known fact that every ideal of $\FC_{\overline{X}}$ is finitely generated as an ideal.
	
Let $U^{\prime}=\{u\in U \mid I_u\neq\emptyset\}$.  
For each $u\in U^{\prime}$ let $Z_u$ be a finite generating set for $I_u$, and let 
$$Z=\bigcup_{u \in U^{\prime}}Z_u.$$ 
As $U$ is finite, we have that $Z$ is finite. 
For each $z\in Z$, we have 
$$z={\overline{x}_1}^{\alpha_1(z)}{\overline{x}_2}^{\alpha_2(z)}\dots {\overline{x}_m}^{\alpha_m(z)}$$ for some $\alpha_i(z)\in\mathbb{N}_0$. 
Define
\[n=\text{max}\{\alpha_i(z) \mid z\in Z, 1\leq i\leq m\}.\]
	
Now let $\sim$ be the congruence on $S$ generated by the set \linebreak $\{(x_i^n,x_i^{n+|H|}) \mid 1 \leq i \leq m \}.$
Each of the finitely many generators of $S/\!\!\sim$ is periodic and $S/\!\!\sim$ is commutative, so $S/\!\!\sim$ is finite.
We now show that $[h]_{\sim}=\{h\}$. 
	
Let $t \sim h$.  
We need to show that $t=h$.  
Clearly it is sufficient to assume that $t$ is obtained from $h$ by a single application of a pair from the generating set of $\sim$. 
So, let $h=sx_i^p$ and $t=sx_i^q$ where $1 \leq i \leq m$, $s \in S$ and $\{p,q\} = \{n,n+|H|\}$.
	
If $(p,q)=(n,n+|H|)$ then $t=hx_i^{|H|}$. 
Since $x_i \in \Stab(H)$, $[x_i]_{\sigma}$ is an element of the Sch\"utzenberger group $\Gamma(H)$. 
As $|\Gamma(H)|=|H|$, it follows that $[x_i^{|H|}]_{\sigma}=[x_i]^{|H|}_{\sigma}=[1]_{\sigma}$. 
Hence $t=hx_i^{|H|}=h$.
	
Now we consider the case when $(p,\ q)=(n+|H|, n)$. 
As $h \notin \Stab(H)$, we have $s\in A_h\!\setminus\!\{0\}$. 
Any way of decomposing $s$ into generators must contain at least one element from $Y$. 
Therefore, we have that $s=us'$, where $u \in U'$ and $s' \in \Stab(H)$. 
Fix some \[w={\overline{x}_1}^{\beta_1}{\overline{x}_2}^{\beta_2}\dots {\overline{x}_m}^{\beta_m} \in \FC_{\overline{X}}\] such that $\phi(w)=s'$. 
As $h=us'x_i^{n+|H|} \in H$, we have that 
\[w{\overline{x}_i}^{n+|H|}={\overline{x}_1}^{\beta_1}{\overline{x}_2}^{\beta_2}\dots {\overline{x}_i}^{\beta_i+n+|H|} \dots {\overline{x}_m}^{\beta_m} \in I_u.\]  
Then there exist $z \in Z$ and $w'={\overline{x}_1}^{\gamma_1} \dots {\overline{x}_m}^{\gamma_m} \in \FC_{\overline{x}}$ such that 
\[zw'=w{\overline{x}_i}^{n +|H|}.\] 
For $1 \leq j \leq m$, we have that 
\[ \alpha_j(z)+\gamma_j=\begin{cases} \beta_{j} &\text{if } j \neq i, \\ \beta_i+n+|H| &\text{if } j=i. \end{cases} \]
As $\alpha_i(z) \leq n$ by definition, it must be the case that $\gamma_i \geq |H|$. 
Then
\[w{\overline{x}_i}^n=z{\overline{x}_1}^{\gamma_1}\dots {\overline{x}_i}^{\delta} \dots {\overline{x}_m}^{\gamma_m}\] where $\delta=\gamma_i-|H| \geq 0$. 
Hence $w{\overline{x}_i}^n \in I_u$ and so $sx_i^n=t \in H$. 
As $h=tx_i^{|H|}$, a similar argument as above proves that $h=t$, as required.
	
\textbf{Case 2}. Now we assume that $H$ is a group. 
By Proposition \ref{prop2}, $A_h$ is either a group or the ideal extension of $H$ by a nilpotent semigroup. 
Hence $A_h \subseteq \Stab(H)$. 
If $H$ is the minimal $\mathcal{H}$-class of $S$, then $S^1=\text{Stab}(H).$  
If $H$ is not minimal, we may assume that it is the unique minimal non-zero $\mathcal{H}$-class of $S$ (by taking the Rees quotient by $I$), in which case we have $S^1\!\setminus\!\{0\}=\text{Stab}(H)$ by Corollary \ref{arch} and the fact that $A_h \subseteq \Stab(H)$.  
	
In either case, let $X$ be a finite generating set for $\Stab(H)$. 
As in Case 1, let $\overline{X}$ be a set in bijection with $X$, let $\FC_{\overline{X}}$ denote the free commutative monoid on $\overline{X}$, and let $\phi: \FC_{\overline{X}} \to \Stab(H)$ be the unique extension to a homomorphism of a bijection $\overline{X} \to X$.
Define 
\[J=\{w \in \FC_{\overline{X}} \mid \phi(w) \in H\}.\] 
Then $J$ is an ideal of $\FC_{\overline{X}}$. 
Let $Z$ be a finite generating set for $J$ (as an ideal), and let $n$ be the maximal exponent of a generator appearing in any word $z \in Z$.
	
Let $\sim$ be the congruence on $S$ with generating set $\{(x^n,x^{n+|H|}) \mid x \in X\}.$ 
An argument essentially the same as that of Case 1 shows that $S/\!\!\sim$ is finite and $[h]_\sim=\{h\}$, completing the proof of this direction and of the theorem.
\end{proof}

\section{Beyond Finitely Generated Commutative Semigroups}

Given that for finitely generated commutative semigroups, the three properties of complete separability, strong subsemigroup separability and weak subsemigroup separability coincide, the following questions naturally arise.
\begin{itemize}
	\item For commutative semigroups in general (not necessarily finitely generated), do the properties of complete separability and strong subsemigroup separability coincide?
	\item For commutative semigroups in general, do the properties of strong subsemigroup separability and weak subsemigroup separability coincide?
	\item For finitely generated semigroups in general (not necessarily commutative), do the properties of complete separability and strong subsemigroup separability coincide?
	\item For finitely generated semigroups in general, do the properties of strong subsemigroup separability and weak subsemigroup separability coincide?
\end{itemize}

In this section we answer all of these questions in the negative. 
We will first deal with the commutative case and then the finitely generated case.

\subsection{Non-finitely Generated Commutative Semigroups}

We give an example of a commutative semigroup that is weakly subsemigroup separable but not strongly subsemigroup separable. 
In order to do this, we first establish the following result.

\begin{prop}
	\label{lem3}
	If a residually finite semigroup $S$ has $\mathbb{N}$ as a homomorphic image then it is weakly subsemigroup separable.
\end{prop}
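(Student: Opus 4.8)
The plan is to exploit the surjection $\psi\colon S\to\mathbb{N}$, where $\mathbb{N}=\{1,2,\dots\}$ under addition, as a positive integer grading on $S$. Fix a finitely generated subsemigroup $T=\langle t_1,\dots,t_k\rangle$ and an element $s\in S\!\setminus\!T$, and set $n=\psi(s)$. I would separate $s$ from $T$ by splitting $T$ according to the value of $\psi$, treating the elements of $T$ that share the degree $n$ of $s$ separately from those of a different degree. The degree-$n$ elements will be controlled by residual finiteness, and the remaining ones by a suitable quotient of $\mathbb{N}$.

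The first observation is that $T_n=\{t\in T \mid \psi(t)=n\}$ is finite. Indeed, every $t\in T$ is a product $t_{i_1}\cdots t_{i_m}$ of generators, and since $\psi(t_{i_j})\geq 1$ for each $j$ we get $\psi(t)\geq m$; hence an element of degree $n$ is the image of a word of length at most $n$ over the $k$ generators, and there are only finitely many such words. As $s\notin T$, we have $s\notin T_n$. Using residual finiteness of $S$, for each of the finitely many elements $u\in T_n$ I can choose a finite index congruence separating $s$ from $u$, and then take their meet to obtain a single finite index congruence $\sim'$ with $[s]_{\sim'}\neq[u]_{\sim'}$ for every $u\in T_n$.

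To deal with the elements of $T$ whose degree differs from $n$, the key point is that $\mathbb{N}$ is completely separable: being a finitely generated commutative semigroup all of whose $\mathcal{H}$-classes are singletons, this follows from Theorem \ref{thm1}. (Alternatively, the truncation map $\mathbb{N}\to\{1,\dots,n+1\}$, $k\mapsto\min(k,n+1)$, where the target carries the operation $a\oplus b=\min(a+b,n+1)$, isolates $n$ directly.) Thus there is a finite index congruence $\equiv$ on $\mathbb{N}$ with $[n]_{\equiv}=\{n\}$; pulling it back along $\psi$ gives a finite index congruence $\sim''$ on $S$ for which $[s]_{\sim''}\neq[t]_{\sim''}$ whenever $\psi(t)\neq n$. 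Finally I would take $\sim\,=\,\sim'\cap\sim''$, which is again of finite index: for $t\in T$ with $\psi(t)\neq n$ the pair $s,t$ is separated by $\sim''$, while for $t\in T$ with $\psi(t)=n$ we have $t\in T_n$ and the pair is separated by $\sim'$. Hence $\sim$ separates $s$ from the whole of $T$, as required.

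The routine parts are the length bound for $T_n$ and the bookkeeping with meets of finitely many finite index congruences. The one genuinely essential idea — and the step I would be most careful about — is isolating the single degree $n$ inside the grading semigroup $\mathbb{N}$, since the infinitely many elements of $T$ of degree different from $n$ cannot be controlled by residual finiteness alone; complete separability of $\mathbb{N}$ is exactly what renders the decomposition by degree finite-to-one in a usable way.
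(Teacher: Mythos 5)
Your proposal is correct and follows essentially the same route as the paper's proof: the paper also isolates the fibre $\{t\in T\mid f(t)=n\}$, shows it is finite because generators have positive degree, handles it by residual finiteness, and handles the other degrees via the Rees quotient of $\mathbb{N}$ by the ideal $\{m\mid m>n\}$ — which is exactly your truncation map $k\mapsto\min(k,n+1)$. The only cosmetic difference is that the paper phrases the conclusion as a single homomorphism $(g\circ f)\times h$ into a finite direct product rather than as an intersection of finite index congruences.
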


\begin{proof}
	Let $T \leq S$ be finitely generated and let $x \in S \! \setminus \! T$. 
	By assumption there exists a homomorphism $f: S \to \mathbb{N}$. Let $n=f(x)$. 
	The set $I=\{m \mid m> n\} \subseteq \mathbb{N}$ is an ideal of $\mathbb{N}$. 
	Let $g: \mathbb{N} \to \mathbb{N}/I$ be the canonical homomorphism.

	Since $T$ is finitely generated and $f(st) >f(s)$ for any $s,t \in S$, it follows that the set 
	\[Y=\{t \in T \mid f(t)=n\}\]
	is finite. 
	Since $S$ is residually finite, there exists a finite semigroup $P$ and homomorphism $h: S \to P$ such that $h(x) \notin h(Y)$. 
	Then $(g \circ f) \times h: S \to \mathbb{N}/I \times P$ separates $x$ from $T$.
\end{proof}	

\begin{eg}
	\normalfont Consider $S=\mathbb{N} \times \mathbb{Z}$. 
	Now, $S$ is residually finite since it is the direct product of two residually finite semigroups.
	As the projection map onto the first factor gives a homomorphic image which is $\mathbb{N}$, we conclude that $S$ is weakly subsemigroup separable by Proposition \ref{lem3}. 
	
	We now show that $S$ is not strongly subsemigroup separable. 
	Consider $\mathbb{N} \times \mathbb{N} \leq S$ and the element $(2,0) \notin \mathbb{N} \times \mathbb{N}$. 
	Let $\sim$ be a finite index congruence on $S$. 
	Then there exist $i, j \in \mathbb{Z}$ with $i<j$ such that $(1, i) \sim (1, j)$. 
	Then 
	\[(2, 0)=(1, i)(1, -i) \sim (1,j)(1, -i)=(2, j-i) \in \mathbb{N} \times \mathbb{N}.\] 
	Hence, $S$ is a commutative semigroup which is weakly subsemigroup separable but not strongly subsemigroup separable. 
\end{eg}

\begin{rem}
	\normalfont The semigroup $\mathbb{N} \times \mathbb{Z}$ is also an example of a weakly subsemigroup separable semigroup which is a direct product of two semigroups where one of the factors is not weakly subsemigroup separable.
	 This is in contrast to the situation for  residual finiteness: the direct product of two semigroups is residually finite if and only if both factors are residually finite \cite[Theorem 2]{Gray2009}.
\end{rem}

We are left to find an example of a strongly subsemigroup separable commutative semigroup which is not completely separable. 
Our example is a group.

\begin{eg}
	\normalfont Let $C_2$ denote the cyclic group of order 2. 
	Let $G=C_2^{\mathbb{N}}$ be the Cartesian product of countably many copies of $C_2$. 
	By Lemma \ref{comp.sep.group}, $G$ is not completely separable. 
	But from Theorem \ref{thm3}, an abelian group is strongly subsemigroup separable if and only if it is torsion and for each prime $p$, the primary $p$-component is bounded in the exponent. 
	As every non-identity element in $G$ has order 2, $G$ certainly satisfies these conditions. 
	Hence $G$ is strongly subsemigroup separable. 
\end{eg}

\subsection{Finitely Generated Semigroups} 

In the previous section we showed that in the statement of Theorem \ref{thm1}, being commutative is not on its own a sufficient condition. 
In this section we will show that being finitely generated is also not on its own a sufficient condition. 
That is, we provide two examples of finitely generated semigroups, one of which is weakly subsemigroup separable but not strongly subsemigroup separable, and the other strongly subsemigroup separable but not completely separable.

First we give an example of a finitely generated semigroup which is weakly subsemigroup separable but not strongly subsemigroup separable. 
We do this by introducing a construction of semigroups and establishing necessary and sufficient conditions for this construction to be weakly subsemigroup separable and finitely generated. 
For the construction and proof, we will use the following notation. 
For a subset $Z \subseteq G$ of an abelian group $G$, let $X_Z=\{x_z \mid z \in Z\}$ be a set disjoint from $G$.

\begin{con}
	\label{con1}
	\normalfont Let $T$ be a semigroup, and let $G$ be an abelian group such that there exists a surjective homomorphism $\phi: T \to G$. Let $N=X_G \cup \{0\}$ be a null semigroup disjoint from $T$.
	Let $\mathcal{S}(T,G,\phi)=T \cup N$, with multiplication inherited from $T$ and $N$, and for $t \in T$ and $x_g \in X_G$ we define the following multiplication:
	\begin{equation*}
	\begin{split}
	& x_g\cdot t=x_{g \phi(t)}, \\
	& t \cdot x_g=x_{g (\phi(t))^{-1}}, \\
	& t \cdot 0=0 \cdot t =0. \\
	\end{split}
	\end{equation*}
	An exhaustive check confirms this multiplication is associative and therefore $\mathcal{S}(T,G,\phi)$ is a semigroup.
\end{con}

\begin{rem}
\label{rem3}
In Construction \ref{con1}, the set $X_G$ forms a non-group $\mathcal{H}$-class and the Sch\"utzenberger group of this $\mathcal{H}$-class is isomorphic to $G$.
\end{rem}

To give necessary and sufficient conditions for $\mathcal{S}(T,G, \phi)$ to be weakly subsemigroup separable we use the following lemma.

\begin{lem}
	\label{lem11}
	Let $G$ be a weakly subgroup separable group. 
	Let $H$ be a finitely generated subgroup of $G$, let \[U=\bigcup_{i=1}^n{Hg_i}\] be a finite union of cosets of $H$, and let $x \in G \! \setminus \! U$. 
	Then $x$ can be separated from $U$.
\end{lem}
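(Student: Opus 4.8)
The plan is to separate $x$ from each coset $Hg_i$ individually and then combine the resulting finite quotients via a direct product, exactly as in the proof of Lemma \ref{locallyfinite}. First I would fix an index $i \in \{1, \dots, n\}$ and show that $x$ can be separated from the single coset $Hg_i$. The key observation is that $x \notin Hg_i$: this follows because $x \notin U = \bigcup_{i=1}^n Hg_i$. Equivalently, $xg_i^{-1} \notin H$. Since $H$ is a finitely generated subgroup and $G$ is weakly subgroup separable, the element $xg_i^{-1}$ can be separated from $H$: there exists a finite group $K_i$ and a homomorphism $\psi_i : G \to K_i$ such that $\psi_i(xg_i^{-1}) \notin \psi_i(H)$.

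Next I would translate this separation of $xg_i^{-1}$ from $H$ back into a separation of $x$ from the coset $Hg_i$. Suppose for contradiction that $\psi_i(x) \in \psi_i(Hg_i)$, so $\psi_i(x) = \psi_i(h)\psi_i(g_i)$ for some $h \in H$. Then $\psi_i(xg_i^{-1}) = \psi_i(x)\psi_i(g_i)^{-1} = \psi_i(h) \in \psi_i(H)$, contradicting the choice of $\psi_i$. Hence $\psi_i(x) \notin \psi_i(Hg_i)$, so $\psi_i$ separates $x$ from $Hg_i$.

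Finally I would assemble the global separating homomorphism. Let $K = K_1 \times K_2 \times \dots \times K_n$ and define $\psi : G \to K$ by $\psi(y) = (\psi_1(y), \dots, \psi_n(y))$. This is a homomorphism into a finite group. I claim $\psi(x) \notin \psi(U)$. Indeed, if $\psi(x) = \psi(u)$ for some $u \in U$, then $u$ lies in some coset $Hg_i$, and projecting onto the $i$th coordinate gives $\psi_i(x) = \psi_i(u) \in \psi_i(Hg_i)$, contradicting the previous paragraph. Therefore $\psi$ separates $x$ from $U$, as required. The argument is essentially routine once the reduction to a single coset is in place; the only point requiring care is the translation between separating $x$ from $Hg_i$ and separating $xg_i^{-1}$ from $H$, which is where the group structure is genuinely used, so I expect that coset-manipulation step to be the main (albeit modest) obstacle.
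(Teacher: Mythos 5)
Your proposal is correct and follows exactly the same route as the paper: separate $xg_i^{-1}$ from $H$ using weak subgroup separability, translate this to a separation of $x$ from the coset $Hg_i$, and combine the finitely many homomorphisms via a direct product. The paper's proof is essentially identical, only slightly terser in the coset-translation step that you spell out in detail.
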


\begin{proof}
	Let $i \in\{1,\dots,n\}$. 
	First we will show that $x$ can be separated from $Hg_i$. 
	As $x \notin Hg_i$, we have $xg_i^{-1} \notin H$. 
	As $G$ is weakly subgroup separable, there exists a finite group $K_i$ and homomorphism $\phi_i: G \to K_i$ such that $\phi_i(xg_i^{-1})  \notin \phi_i(H)$. 
	It follows that $\phi_i(x) \notin \phi_i(Hg_i)$. 
	Then \[\phi_1 \times \phi_2 \times \dots \times \phi_n : G \to K_1 \times K_2 \times \dots \times K_n\] is a homomorphism into a finite group that separates $x$ from $U$.
\end{proof}

\begin{prop}
	\label{lem10}
	Let $T$ be a semigroup, let $G$ be an abelian group such that there exists a surjective homomorphism $\phi: T \to G$, and let $S=\mathcal{S}(T,G,\phi)$. 
	Then $S$ is weakly subsemigroup separable if and only if  $T$ is weakly subsemigroup separable and $G$ is weakly subgroup separable.
\end{prop}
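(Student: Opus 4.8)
For the forward direction, suppose $S = \mathcal{S}(T,G,\phi)$ is weakly subsemigroup separable. The two claims follow from hereditary behaviour. First, $T$ is a subsemigroup of $S$, so $T$ is weakly subsemigroup separable by Proposition \ref{lem5}. Second, by Remark \ref{rem3} the set $X_G$ is a non-group $\mathcal{H}$-class whose Sch\"utzenberger group is isomorphic to $G$. Since $G$ is abelian, I can invoke Corollary \ref{abelianSgroup}: as $S$ is weakly subsemigroup separable and $\Gamma(X_G) \cong G$ is abelian, $\Gamma(X_G)$ is weakly subgroup separable, and hence $G$ is weakly subgroup separable. This is precisely where the partial solution developed in Proposition \ref{last} is needed, as flagged in the text.

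For the converse, assume $T$ is weakly subsemigroup separable and $G$ is weakly subgroup separable; I must separate an arbitrary element $s \in S$ from any finitely generated subsemigroup $V \leq S$ with $s \notin V$. The structure of $S = T \cup X_G \cup \{0\}$ suggests splitting into cases according to whether $s$ lies in $T$, in $X_G$, or equals $0$. The case $s = 0$ is immediate since $0$ is absorbing and typically central to a finite quotient; the case $s \in T$ should reduce, after intersecting $V$ with $T$ and pushing through the projection-like structure, to separating $s$ from a finitely generated subsemigroup of $T$, handled by hypothesis. The genuinely delicate case is $s = x_g \in X_G$. Here the products $x_h \cdot t = x_{h\phi(t)}$ mean that the intersection $V \cap X_G$ is controlled by the image under $\phi$ of the $T$-part of $V$ together with finitely many group translates. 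The key computational observation should be that $V \cap X_G$ has the form $\{x_w \mid w \in U\}$ where $U$ is a finite union of cosets of a finitely generated subgroup $H \leq G$ (arising from $\phi$ applied to generators of $V$).

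Granting that description, I would finish as follows. The condition $x_g \notin V$ forces $g \notin U$, so by Lemma \ref{lem11} there is a finite group $K$ and a homomorphism $\psi : G \to K$ separating $g$ from $U$. I then build a finite semigroup by combining $\psi$ (composed with $\phi$ on the $T$-part and acting on subscripts of $X_G$) with a finite quotient of $T$ that separates the $T$-components of $V$, assembling these into a homomorphism from $S$ into a finite semigroup of the same structural type. The main obstacle I anticipate is twofold: first, rigorously verifying the coset description of $V \cap X_G$, since generators of $V$ may mix elements of $T$, $X_G$, and $0$, and one must track how words in these generators land in $X_G$; and second, constructing a genuinely well-defined homomorphism out of $S$ into a finite semigroup that simultaneously respects the three multiplication rules of Construction \ref{con1} while separating $s$ from all of $V$. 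Combining the two separating maps into a single finite target without breaking associativity is the crux of the argument, and I would expect to verify the homomorphism property by a case check paralleling the associativity check in Construction \ref{con1}.
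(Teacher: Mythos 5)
Your proposal is correct and follows essentially the same route as the paper: the forward direction via Proposition \ref{lem5} and Corollary \ref{abelianSgroup}, and the converse via a case split on the location of the element, with the delicate case resolved by describing $V\cap X_G$ as $X_Z$ for $Z$ a finite union of cosets of $H=\langle\phi(Y\cap T)\rangle$, applying Lemma \ref{lem11}, and mapping into a finite semigroup of the form $\mathcal{S}(K,K,\mathrm{id})$. The details you defer (the coset computation, and the observation that when the separated element is $0$ or the generating set meets $N$ trivially one can just use the two-class congruence $\{T,N\}$) are exactly the ones the paper supplies, and your worry about also separating the $T$-components is unnecessary since the image of $T$ lands in the group part of the target while $x_{f(g)}$ does not.
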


\begin{proof}
$(\Rightarrow)$ First assume that $S$ is weakly subsemigroup separable. 
Since $T$ is a subsemigroup of $S$, it must be weakly subsemigroup separable by Proposition \ref{lem5}.
Since $G$ is abelian and isomorphic to a Sch\"utzenberger group of $S$, it follows from Corollary \ref{abelianSgroup} that $G$ is weakly subgroup separable.
	
\vspace{\baselineskip}	

$(\Leftarrow)$ Now assume that $T$ is weakly subsemigroup separable and $G$ is weakly subgroup separable. 
Let $Y \subseteq S$ be a finite set, $U=\langle Y \rangle \leq S$ and $v \in S \! \setminus \! U$. 
Let $N\subseteq S$ be as in Construction \ref{con1}. 
Let $Y_1=Y \cap T$ and $Y_2=Y \cap N$. 
We split into cases.

	\noindent \textbf{Case 1.} Assume that $v \in T$. 
	Note that $T \cap U=\langle Y_1 \rangle$. 
	As $T$ is weakly subsemigroup separable and $v \notin \langle Y_1 \rangle$, there exists a finite semigroup $P$ and homomorphism $f: T \to P$ such that $f(v) \notin f(T \! \setminus \! \langle Y_1 \rangle)$. 
	Define $\overline{f}: S \to P^0$ by 
	\begin{equation*}
	s \mapsto \begin{cases}
	f(s) & \text{if } s \in T, \\
	0 &\text{otherwise.}
	\end{cases}
	\end{equation*}
	Then $\overline{f}$ is a homomorphism and $\overline{f}(v) \notin \overline{f} (U)$.
	
	\vspace{\baselineskip}
	
	\noindent \textbf{Case 2.} Now assume that $v\in N$ and $Y_2 = \emptyset$. 
	Then $U \subseteq T$. 
	Let $\sim$ be the congruence on $S$ with classes $T$ and $N$. 
	Then $[v]_{\sim} \neq [u]_{\sim}$ for all $u \in U$.
	
	\vspace{\baselineskip}
	
	\noindent \textbf{Case 3.} Finally assume that $v \in N $ and $Y_2 \neq \emptyset$. 
	Note that $0\in Y_2^2\subseteq N$, and hence $v\neq 0$. 
	Let $v=x_g$ and $Y_2=\{x_{g_1},\dots,x_{g_n}\}$. 
    Let $H \leq G$ be the subgroup generated by the set $\phi(Y_1)$. 
	Then \[U \cap N =X_Z \cup \{0\},\] where $Z=\bigcup_{i=1}^n{Hg_i}.$ 
	As $v \notin U$, it follows that $g \notin \bigcup_{i=1}^n{Hg_i}$. 
	As $G$ is weakly subgroup separable there exists a finite group $K$ and homomorphism $f: G \to K$ such that $f(g) \notin \bigcup_{i=1}^n{f(Hg_i)}$ by Lemma \ref{lem11}.  
	Let $P=\mathcal{S}(K,K,\text{id})=K\cup X_K\cup \{0\}$. Let $\overline{f}: S \to P$ be given by
	\begin{equation*}
	s \mapsto \begin{cases}
	(f \circ \phi)(s) & \text{if } s \in T, \\
	x_{f(g)} & \text{if } s=x_g \text{ for some } g \in G, \\
	0 & \text{if } s=0.
	\end{cases}
	\end{equation*}
	Then it is straightforward to check that $\overline{f}$ is a homomorphism with $\overline{f}(v) \notin \overline{f}(U)$. 
\end{proof}

The next lemma provides necessary and sufficient conditions for  $\mathcal{S}(T,G,\phi)$ to be finitely generated.

\begin{lem}
\label{lem13}
Let $T$ be a semigroup, let $G$ be an abelian group such that there exists a surjective homomorphism $\phi: T \to G$, and let $S=\mathcal{S}(T,G,\phi)$. 
Then $S$ is finitely generated if and only if $T$ is finitely generated.
\end{lem}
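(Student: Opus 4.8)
The plan is to prove both directions separately, with the forward direction being essentially immediate and the reverse direction requiring a concrete finite generating set. For the forward direction ($S$ finitely generated $\Rightarrow$ $T$ finitely generated), I would argue that $T$ arises as a quotient of $S$: the relation identifying all of $N$ to a single point (or, more carefully, passing to a suitable Rees-type quotient and then restricting) shows that $T$ is a homomorphic image of $S$, or one can observe directly that if $S = \langle W\rangle$ for a finite set $W$, then the elements of $W$ lying in $T$, together with products of the form $x_g \cdot t$ reabsorbed appropriately, force $T$ itself to be generated by $W \cap T$. The cleanest route is to note that $T$ is an ideal's complement that is closed under multiplication and that the projection collapsing $N$ to $0$ gives $T^0$ as a finitely generated image; since a finitely generated semigroup with zero remains finitely generated after deleting the zero if the zero is not needed as a generator, $T$ is finitely generated.

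For the reverse direction ($T$ finitely generated $\Rightarrow$ $S$ finitely generated), suppose $T = \langle A \rangle$ for some finite set $A \subseteq T$. The key observation is that $\phi(A)$ generates $G$ as a semigroup, but $G$ is a group, so we must be able to reach every element of $X_G$ by multiplying a single $x_g$ on the right by elements of $T$. Concretely, using the multiplication rule $x_g \cdot t = x_{g\phi(t)}$, I would fix one element $x_{g_0} \in X_G$ and show that $\{x_{g_0} \cdot t \mid t \in T\} = \{x_{g_0 \phi(t)} \mid t \in T\} = X_{g_0 G} = X_G$, where the last equality holds because $\phi$ is surjective onto the group $G$. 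Thus the finite set $A \cup \{x_{g_0}\}$ generates everything in $T \cup X_G$, and $0$ is obtained as $x_{g_0} \cdot x_{g_0} \in N$ (since $N$ is a null semigroup with zero $0$), provided $X_G$ is nonempty. Hence $A \cup \{x_{g_0}\}$ is a finite generating set for $S$.

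The main subtlety, and the step I would treat most carefully, is the reverse direction's reliance on $\phi$ being \emph{surjective} and $G$ being a \emph{group}: surjectivity guarantees $g_0\phi(T) = g_0 G = G$ so that a single generator of $X_G$ suffices, and the group structure is what makes this a translation of a fixed coset onto all of $G$. If $\phi$ were merely a homomorphism into $G$ without surjectivity, $X_G$ could fail to be finitely generated even when $T$ is, so this hypothesis is doing real work. A minor point to verify is that the zero element $0$ of $N$ is indeed produced as a product and need not be listed as a separate generator; this follows since any product of two elements of $X_G$ lands in $0$ by the null multiplication on $N$. With these observations the generating set $A \cup \{x_{g_0}\}$ is seen to be finite and to generate all of $S = T \cup X_G \cup \{0\}$, completing the proof.
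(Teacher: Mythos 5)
Your proof is correct and follows essentially the same route as the paper: the forward direction via the fact that $T$ is the complement of the ideal $N$ (so the generators of $S$ lying in $T$ must generate $T$), and the reverse direction by adjoining a single element $x_{g_0}$ to a finite generating set of $T$, using surjectivity of $\phi$ to sweep out all of $X_G$ and the null multiplication to produce $0$. The paper simply takes $g_0=1_G$; your observation that $X_G$ is nonempty is automatic since $G$ contains its identity.
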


\begin{proof}
If $S$ is finitely generated, then as $T$ is the complement of an ideal, it must also be finitely generated. 
Conversely, if $T$ is generated by a finite set $Y$, then it is easy to see that $S$ is generated by $Y\cup\{x_{1_G}\}$.
\end{proof}

We provide an example of a weakly subsemigroup separable semigroup $S$ that has the following properties:
\begin{itemize}
	\item $S$ is finitely generated, non-commutative, but not strongly subsemigroup separable; 
	\item $S$ has a Sch\"utzenberger group which is not weakly subsemigroup separable.	
\end{itemize}

\begin{eg}
	\label{eg5}
	Let $F_2=\{a,b\}^+$ be the free semigroup on $\{a, b\}$. 
	Let $\phi: F_2 \to \mathbb{Z}$ be given by $a \mapsto 1$ and $b \mapsto -1$. 
	As $F_2$ is completely separable \cite[Corollary 1]{MR0274613} and $\mathbb{Z}$ is weakly subgroup separable, it follows that $\mathcal{S}(F_2,\mathbb{Z},\phi)$ is weakly subsemigroup separable by Lemma \ref{lem10}. 
	Since $F_2$ is finitely generated, $\mathcal{S}(F_2,\mathbb{Z},\phi)$ is finitely generated by Lemma $\ref{lem13}$. 
	It is clear that $\mathcal{S}(F_2,\mathbb{Z},\phi)$ is not commutative.  
	
	By Remark \ref{rem3} we have that $X_{\mathbb{Z}}$ is an infinite non-group $\mathcal{H}$-class. 
	Hence $\mathcal{S}(F_2,\mathbb{Z},\phi)$ is not strongly subsemigroup separable by Proposition \ref{lem4}. 
	Also the Sch\"utzenberger group of $X_{\mathbb{Z}}$ is isomorphic to $\mathbb{Z}$ and therefore is not weakly subsemigroup separable. 
	Notice that, due to the way the right and left actions of $F_2$ on $X_{\mathbb{Z}}$ are defined, the $\mathcal{H}$-class $X_{\mathbb{Z}}$ does not satisfy the condition of Lemma \ref{lem12}.
\end{eg}

We conclude this section by exhibiting an example of a finitely generated semigroup which is strongly subsemigroup separable but not completely separable. 

\begin{eg}
	\label{square-free}
	 Let $F_3=\{a,b,c\}^+$ be the free semigroup on the set $\{a, b,c\}$. 
	 Let $I \leq F_3$ be the ideal generated by the set $\{x^2 \mid x \in F_3\}$. 
	 Let $S=F_3/I$ be the Rees quotient of $F_3$ by $I$. 
	 We can view $S$ as the set of all square-free words over the alphabet $\{a,b,c\}$ with a zero adjoined. 
	 Multiplication in $S$ is concatenation, unless concatenation creates a word containing a subword which is a square, in which case the product is zero. 
	 Certainly $S$ is finitely generated by $\{a,b, c\}$. 
	
	First we will show that $S$ is not completely separable. 
	It is known that there exists an infinite square-free sequence $w=x_1x_2x_3\dots$ over $\{a,b,c\}$, see \cite[Chapter 2]{word}. 
	Then every finite prefix of $w$ is a non-zero element of $S$. 
	Let $w_i=x_1x_2\dots x_i \in S$. For $i < j$, let $v_{i,j}=x_{i+1}x_{i+2}\dots x_j \in S$. 
	Let $\sim$ be a finite index congruence class on $S$. 
	Then there exist $i, j \in \mathbb{N}$, with $i < j$, such that $w_i \sim w_j$. 
	Then \[w_j=w_iv_{i,j}\sim w_jv_{i,j}=w_iv_{i,j}v_{i,j}=0.\] 
	So we have shown that it is not possible for $0$ to be separated from $S \!\setminus\!\{0\}$ in a finite quotient. 
	Hence, $S$ is not completely separable.
	
	Now let $T \leq S$. 
	Then $0 \in T$. For $x \in S\!\setminus\! \{0\}$ let $|x|$ denote the length of $x$ in terms of the generators $\{a,b,c\}$. 
	Now let $v \notin T$ where $|v|=n$. 
	Let 
	\[I=\{x \in S \mid |x| > n\} \cup \{0\}.\] 
	Then $I$ is an ideal. 
	Clearly the Rees quotient $S/I$ is finite. 
	Furthermore, $[v]_{I}=\{v\}$. 
	Hence, $S$ is strongly subsemigroup separable. 
\end{eg}

\section{Semigroups with finitely many $\mathcal{H}$-classes}

In Section 3 we asked which of our separability properties are inherited by Sch\"utzenberger groups.
We showed in Corollaries \ref{cor2} and \ref{cor3} that the properties of complete separability and strong subsemigroup separability are inherited by Sch\"utzenberger groups. 
Although it is not true that every Sch\"utzenberger group of a weakly subsemigroup separable semigroup is itself weakly subsemigroup separable, we showed in Corollary \ref{lem1} that weak subsemigroup separability is inherited by Sch\"utzenberger groups of commutative semigroups.

One may ask whether the properties are inherited in the opposite direction, i.e., if every Sch\"utzenberger group of a semigroup $S$ has a separability property must $S$ itself satisfy the same property? 
This, however, turns out not to be true.
Let $\mathscr{P}$ be any of the properties of complete separability, strong subsemigroup separability, weak separability or residual finiteness. 
A semigroup whose Sch\"utzenberger groups all have property $\mathscr{P}$ may not itself have property $\mathscr{P}$. 
One example is the bicyclic monoid, given by the monoid presentation $\langle b,c \mid bc=1 \rangle$. 
The bicyclic monoid is $\mathcal{H}$-trivial, meaning that every $\mathcal{H}$-class is a singleton, so every Sch\"utzenberger group is the trivial group and certainly completely separable. 
However the bicyclic monoid is not even residually finite \cite[Corollary 1.12]{clifford1961algebraic}. 
In fact this direction fails comprehensively even for commutative semigroups, as the next example shows. 

\begin{eg}
	\label{eg2}
	\normalfont Let $A=\langle a \rangle \cong \mathbb{N}$. 
	Let $B=\{b_i \mid i \in \mathbb{N}\}\cup\{0\}$ be the countable null semigroup. 
	Let $S=A \cup B$ with multiplication between $A$ and $B$ as follows:
	
	\[a^ib_j=b_ja^i=\begin{cases} 
	b_{j-i} & \text{for } j > i, \\
	0 & \text{otherwise},
	\end{cases}\] 
	\[a^i0=0a^i=0.\]
	An exhaustive case analysis shows that this multiplication is associative and clearly it is commutative. 
	It is also straightforward to check that $S$ is $\mathcal{H}$-trivial. 
	However, $S$ is not residually finite.
	Suppose that $\sim$ is a finite index congruence on $S$. 
	Then there exist $i, j \in  \mathbb{N}$, with $i < j$, such that $b_i \sim b_j$. 
	Then 
	\[0=b_ia^{j-1} \sim b_ja^{j-1}=b_1.\] 
	So we cannot separate $0$ and $b_1$ in a finite quotient, and hence $S$ is not residually finite.
\end{eg}

\begin{rem}
	\normalfont In the semigroup $S$ of Example \ref{eg2}, both the ideal $B$ and the Rees quotient $S/B\cong\mathbb{N}_0$ are completely separable.
	 However, this is not enough to guarantee that $S$ is completely separable.
\end{rem}

In the remainder of this section we restrict our attention to the class of semigroups which have only finitely many $\mathcal{H}$-classes, or equivalently semigroups which have only finitely many left and right ideals. 
This is motivated by the following result.

\begin{thm}
\label{finiteresiduallyfinite}
\normalfont{\cite[Theorem 7.2]{GRAY201421}} \textit{Let $S$ be a semigroup with finitely many $\mathcal{H}$-classes. 
Then $S$ is residually finite if and only if all its Sch\"utzenberger groups are residually finite.} 
\end{thm}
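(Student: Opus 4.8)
The forward implication needs no hypothesis on the number of $\mathcal{H}$-classes: if $S$ is residually finite then every Schützenberger group of $S$ is residually finite by Proposition \ref{schutzgroupRF}. So the content is the converse, and the plan is to prove it by constructing, for each pair of distinct elements $s, t \in S$, a homomorphism onto a finite semigroup that separates them. I would split this into two tasks: separating $s$ and $t$ when they lie in the same $\mathcal{H}$-class, and separating them when they lie in different $\mathcal{H}$-classes.

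For the same-$\mathcal{H}$-class case, write $H = H_s = H_t$. Since $\Gamma(H)$ acts regularly (freely and transitively) on $H$, there is a unique $g \in \Gamma(H)$ with $s \cdot g = t$, and $g \neq 1$ because $s \neq t$. Using residual finiteness of $\Gamma(H)$, I would choose a finite group $Q$ and an epimorphism $\psi : \Gamma(H) \to Q$ with $\psi(g) \neq 1$, and then promote $\psi$ to a homomorphism from $S$ into a finite semigroup by means of the Schützenberger representation associated with $H$, i.e. the action of $S^1$ on $H$ by right translation. The key caveat — and the reason one cannot simply map $\Stab(H)$ onto $Q$ and send everything else to a zero — is that $\Stab(H)$ is \emph{not} closed under factorisation ($ab \in \Stab(H)$ need not force $a, b \in \Stab(H)$), and $\mathcal{H}$ is not a congruence; one must therefore record the full partial-permutation action rather than a coarse stabiliser/zero dichotomy. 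Here the finiteness of the number of $\mathcal{H}$-classes is what keeps the resulting target finite once combined with $\psi$.

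For the different-$\mathcal{H}$-classes case, I would exploit that there are only finitely many $\mathcal{H}$-classes (equivalently, finitely many left and right ideals) to build a finite quotient reflecting the coarse skeleton of how $\mathcal{H}$-classes multiply, using the left and right translation actions of $S$ on this finite index set, totalised by adjoining a zero wherever an element is carried out of the relevant class; the resulting finite quotient detects that $s$ and $t$ occupy different $\mathcal{H}$-classes. The main obstacle I anticipate is precisely this construction of genuine finite homomorphic images: because $\mathcal{H}$ need not be a congruence and stabilisers are not factor-closed, the naive ``send everything outside to zero'' maps fail to be homomorphisms, so one must verify the homomorphism property for the (partial) Schützenberger and translation representations and check that the zero-totalisation interacts correctly with the residually finite quotients of the Schützenberger groups. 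Assembling the within-class argument from $\Gamma(H)$ and the across-class argument from the finite skeleton into a single finite separating quotient for each pair $s \neq t$ is where the real care lies, with finiteness of the number of $\mathcal{H}$-classes used throughout to keep every target finite.
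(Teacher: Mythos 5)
You should first be aware that the paper does not prove this statement at all: it is imported verbatim as \cite[Theorem 7.2]{GRAY201421}, so the only proof to compare against is the one in that reference. Your outline does track the strategy used there: the forward direction is exactly Proposition \ref{schutzgroupRF} (itself a quoted result, so nothing to do), and the converse is handled by separating a pair $s\neq t$ according to whether or not $H_s=H_t$, using a Sch\"utzenberger-type representation in the first case and the finiteness of the set of $\mathcal{H}$-classes in the second.

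As a proof, however, your proposal has a genuine gap precisely where the theorem lives. The entire content of the converse is the construction of a finite homomorphic image of $S$ out of a finite quotient $Q$ of $\Gamma(H)$, and you never carry it out: you correctly note that the naive map (send $\Stab(H)$ onto $Q$ and everything else to zero) fails to be a homomorphism, but the sentence saying one ``must verify the homomorphism property for the (partial) Sch\"utzenberger and translation representations'' is exactly the step that constitutes the theorem, and it is left as an acknowledged to-do. To be concrete about what is missing: right translation on $H$ alone, totalised by zero, is still not multiplicative; the correct object is the action on the whole $\mathcal{R}$-class $R\supseteq H$, which by hypothesis is a union of finitely many $\mathcal{H}$-classes $H_1,\dots,H_k$. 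By Green's Lemma each $s\in S^1$ either maps $H_i$ bijectively onto some $H_j\subseteq R$ or maps it out of $R$, so after fixing base points $s$ is encoded as a $k\times k$ row-monomial matrix over $\Gamma(H)^0$; composing entrywise with a finite quotient $\Gamma(H)\to Q$ lands in the finite monoid of row-monomial matrices over $Q^0$. One must then prove that this assignment is a homomorphism and that, for $s\neq t$ in $H$ and $Q$ chosen to separate the two group coordinates given by the regular action, the images of $s$ and $t$ actually differ -- none of which appears in your proposal, and the last point in particular is not automatic. By contrast, the case $H_s\neq H_t$ is easier than you make it sound and needs no zero-totalisation at all: $\mathcal{L}$ is a right congruence and $\mathcal{R}$ is a left congruence, each with finitely many classes, so the right action of $S$ on $S^1/\mathcal{L}$ and the left action on $S^1/\mathcal{R}$ are honest homomorphisms into finite transformation monoids that separate elements lying in distinct $\mathcal{L}$- or $\mathcal{R}$-classes. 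So your skeleton is the right one, but the load-bearing construction and its verification are absent.
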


We shall investigate whether there are analogous results for the properties of complete separability, strong subsemigroup separability and weak subsemigroup separability. 

For complete separability, the analogous result holds.

\begin{prop}
Let $S$ be a semigroup with only finitely many $\mathcal{H}$-classes. 
Then the following are equivalent:
\begin{enumerate}
	\item[\textup{(1)}] $S$ is completely separable;
	\item[\textup{(2)}] all the Sch\"utzenberger groups of $S$ are completely separable;
	\item[\textup{(3)}] $S$ is finite.
\end{enumerate}
\end{prop}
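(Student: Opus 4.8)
The plan is to establish the cycle of implications $(1) \Rightarrow (2) \Rightarrow (3) \Rightarrow (1)$, exploiting the fact that for groups complete separability is an extremely restrictive condition.

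For $(1) \Rightarrow (2)$ there is essentially nothing to do: this is exactly the parenthetical assertion of Corollary \ref{cor3}, which states that every $\mathcal{H}$-class of a completely separable semigroup is finite and hence each of its Schützenberger groups is completely separable. Note that this implication does not even require the hypothesis of finitely many $\mathcal{H}$-classes.

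The heart of the argument is $(2) \Rightarrow (3)$. Here I would invoke Lemma \ref{comp.sep.group}, which tells us that a group is completely separable if and only if it is finite. Thus if every Schützenberger group $\Gamma(H)$ is completely separable, then every $\Gamma(H)$ is finite. Since $|\Gamma(H)| = |H|$, each $\mathcal{H}$-class $H$ is finite. As $S$ is the disjoint union of its $\mathcal{H}$-classes and, by hypothesis, there are only finitely many of them, $S$ is a finite union of finite sets and is therefore finite. Finally, $(3) \Rightarrow (1)$ is immediate: as observed in Section 2, the profinite topology on a finite semigroup is discrete, so every finite semigroup is completely separable.

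The only place the hypothesis on the number of $\mathcal{H}$-classes enters is in $(2) \Rightarrow (3)$, and there it is used in a completely routine way. I do not anticipate any genuine obstacle; the result is easy precisely because, for groups, complete separability collapses to plain finiteness (Lemma \ref{comp.sep.group}), so assuming all Schützenberger groups have this property forces every $\mathcal{H}$-class, and hence all of $S$, to be finite. This stands in sharp contrast with Theorem \ref{finiteresiduallyfinite} for residual finiteness, where the analogous equivalence is nontrivial precisely because residually finite groups need not be finite.
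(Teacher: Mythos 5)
Your proposal is correct and follows exactly the same route as the paper: $(1)\Rightarrow(2)$ via Corollary \ref{cor3}, $(2)\Rightarrow(3)$ via Lemma \ref{comp.sep.group} together with $|\Gamma(H)|=|H|$ and the finiteness of the number of $\mathcal{H}$-classes, and $(3)\Rightarrow(1)$ being immediate. There is nothing to add.
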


\begin{proof}
(1) $\Rightarrow$ (2). If $S$ is completely separable, then all of its Sch\"utzenberger groups are completely separable by Corollary \ref{cor3}.

(2) $\Rightarrow$ (3). If a Sch\"utzenberger group is completely separable it is finite by Lemma \ref{comp.sep.group}. 
As a Sch\"utzenberger group is in bijection with the corresponding $\mathcal{H}$-class, and $S$ has only finitely many $\mathcal{H}$-classes, we conclude that $S$ is finite. 

(3) $\Rightarrow$ (1). Clear.
\end{proof}

From Corollary \ref{cor2}, we know that every Schützenberger group of a strongly subsemigroup separable semigroup is itself strongly subsemigroup separable. 
However, even when a semigroup has only finitely many $\mathcal{H}$-classes, every Sch\"utzenberger group being strongly subsemigroup separable does not guarantee that the semigroup is strongly subsemigroup separable, as the following example demonstrates.

\begin{eg}
Let $G$ be an infinite strongly subsemigroup separable abelian group. 
The existence of such a group is established by Theorem \ref{thm3}. 
Then, recalling Construction \ref{con1}, $S=\mathcal{S}(G,G,\text{id})$ has three $\mathcal{H}$-classes: $G$, $X_G$ and $\{0\}$. 
The Sch\"utzenberger groups of the $\mathcal{H}$-classes are isomorphic to $G$, $G$ and the trivial group respectively. 
Then certainly every Sch\"utzenberger group is strongly subsemigroup separable. 
However, since $X_G$ is an infinite non-group $\mathcal{H}$-class, $S$ is not strongly subsemigroup separable by Proposition \ref{lem4}.
\end{eg}

The final property to consider is that of weak subsemigroup separability. 
Of all the separability properties considered in this paper, this is the only one which is not necessarily inherited by Sch\"utzenberger groups, as demonstrated by Example \ref{eg5}. 
However, when we restrict to a semigroup with only finitely many $\mathcal{H}$-classes, the following remains an open problem.

\begin{op}
\label{op}
Is it true that a semigroup with only finitely many $\mathcal{H}$-classes is weakly subsemigroup separable if and only if all its Sch\"utzenberger groups are weakly subsemigroup separable? 
\end{op}

Indeed, we do not even know if either direction of the above statement holds. 
In the rest of this section, we restrict our attention to locally finite semigroups with only finitely many $\mathcal{H}$-classes. 
By concentrating on this smaller class of semigroups we will be able to invoke Lemma \ref{locallyfinite}, which says that a semigroup which is both residually finite and locally finite is weakly subsemigroup separable. 
This line of investigation allows us to give the following partial answer to Open Problem \ref{op}.

\begin{thm}
\label{thm6}
Let $S$ be a semigroup with only finitely many $\mathcal{H}$-classes whose maximal subgroups are all solvable. 
Then $S$ is weakly subsemigroup separable if and only if all its Sch\"utzenberger groups are weakly subsemigroup separable.
\end{thm}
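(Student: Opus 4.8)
The plan is to prove both implications at once by showing that, under the standing hypotheses, each of the conditions ``$S$ is weakly subsemigroup separable'' and ``every Sch\"utzenberger group of $S$ is weakly subsemigroup separable'' is equivalent to the single condition that $S$ is \emph{both residually finite and locally finite}. In one direction the bridge is Lemma~\ref{locallyfinite}, which says a residually finite, locally finite semigroup is weakly subsemigroup separable, and which applies verbatim to groups. The reverse passages use Proposition~\ref{lem} (weak separability forces residual finiteness) together with Proposition~\ref{GroupSep}, by which a weakly subsemigroup separable group is torsion. The solvability hypothesis is consumed exactly to upgrade ``torsion'' to ``locally finite'' for the maximal subgroups: a torsion solvable group is locally finite, seen by induction on the derived length, since for finitely generated torsion solvable $H$ the quotient $H/H'$ is finitely generated abelian torsion, hence finite, so $H'$ has finite index and is therefore finitely generated (and torsion solvable of smaller derived length).

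The crucial auxiliary result I would isolate is: \emph{a semigroup with finitely many $\mathcal{H}$-classes all of whose maximal subgroups are locally finite is itself locally finite}. Since $S$ has finitely many $\mathcal{J}$-classes it possesses a principal series, and I would induct on its length, peeling off the top factor. A principal factor is null, completely simple, or completely $0$-simple; a null semigroup is trivially locally finite, while a completely (or completely $0$-) simple factor is a Rees matrix semigroup $\mathcal{M}^0(G;I,\Lambda;P)$ with $I,\Lambda$ finite (finitely many $\mathcal{H}$-classes) and $G$ a maximal subgroup, which is locally finite whenever $G$ is, because any finitely generated subsemigroup involves only finitely many sandwich entries $p_{\lambda i}$, forcing all occurring group coordinates into a single finitely generated, hence finite, subgroup of $G$. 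Note that non-regular $\mathcal{J}$-classes contribute only null factors, so the Sch\"utzenberger groups of non-regular classes are irrelevant here, which is why only maximal subgroups enter the hypothesis. The inductive step is that local finiteness is preserved under ideal extensions: if $I\trianglelefteq S$ with $I$ and $S/I$ locally finite, then for finitely generated $T\leq S$ the image of $T$ in $S/I$ is finite, so $T\setminus I$ is finite, whence $T\cap I$ is an ideal of $T$ of finite Rees index and thus finitely generated by Jura's theorem; being a finitely generated subsemigroup of the locally finite $I$ it is finite, and therefore so is $T$.

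With this auxiliary result the theorem follows. For the forward implication, if $S$ is weakly subsemigroup separable then so is every subsemigroup by Proposition~\ref{lem5}, in particular every maximal subgroup, which is then torsion by Proposition~\ref{GroupSep} and solvable by hypothesis, hence locally finite; so $S$ is locally finite by the auxiliary result, and residually finite by Proposition~\ref{lem}. Local finiteness of $S$ forces each $\Gamma(H)=\Stab(H)/\sigma_H$ to be locally finite (a finitely generated subgroup lifts to a finitely generated, hence finite, subsemigroup of $\Stab(H)\subseteq S^1$, whose image is a finite subsemigroup of the group $\Gamma(H)$, so a finite subgroup), while each $\Gamma(H)$ is residually finite by Proposition~\ref{schutzgroupRF}; Lemma~\ref{locallyfinite} then yields weak subsemigroup separability of each $\Gamma(H)$. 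For the converse, if all $\Gamma(H)$ are weakly subsemigroup separable then all are residually finite, so $S$ is residually finite by Theorem~\ref{finiteresiduallyfinite}; moreover every maximal subgroup, being isomorphic to one of the $\Gamma(H)$, is torsion and (by hypothesis) solvable, hence locally finite, so $S$ is locally finite by the auxiliary result, and Lemma~\ref{locallyfinite} again gives that $S$ is weakly subsemigroup separable.

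The main obstacle is the auxiliary local-finiteness result, and within it the reduction to completely $0$-simple principal factors and the ideal-extension step (where I rely on Jura's theorem that an ideal of finite Rees index in a finitely generated semigroup is finitely generated); everything else is bookkeeping with the already-established propositions. The one genuinely group-theoretic input is the passage from torsion plus solvable to locally finite, which is where the solvability hypothesis is used in \emph{both} directions, and which is genuinely needed because ``torsion'' alone need not imply local finiteness.
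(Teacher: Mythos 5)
Your proposal is correct and follows essentially the same route as the paper: both directions are reduced to ``residually finite plus locally finite'' via Lemma~\ref{locallyfinite}, with solvability used only to upgrade torsion maximal subgroups to locally finite ones, and your auxiliary local-finiteness result is exactly the paper's Corollary~\ref{cor15}, proved the same way (induction on $\mathcal{J}$-classes through null and completely $0$-simple principal factors, with the finite Rees index theorem handling the ideal-extension step).
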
 

In particular we note that a commutative semigroup with finitely many $\mathcal{H}$-classes is weakly subsemigroup separable if and only if all its Sch\"utzenberger groups are weakly subsemigroup separable. 
To prove Theorem \ref{thm6} we make use of several lemmas. 

A semigroup $S$ is called an \emph{epigroup} if every element of $S$ has a power which lies in a subgroup of $S$.
	
\begin{lem}
	\label{Lem1}
	A semigroup $S$ with finitely many $\mathcal{H}$-classes is an epigroup.
\end{lem}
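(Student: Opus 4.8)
The plan is to show that every $s\in S$ has a power lying in a group $\mathcal{H}$-class (equivalently, a maximal subgroup), which is exactly the epigroup condition. First I would apply the pigeonhole principle to the infinite sequence $s, s^2, s^3, \dots$: since $S$ has only finitely many $\mathcal{H}$-classes, two distinct powers must be $\mathcal{H}$-related, so there exist $i<j$ with $s^i\,\mathcal{H}\,s^j$. Writing $d=j-i\geq 1$ and $H=H_{s^i}$, the goal is to locate a power of $s$ inside a group $\mathcal{H}$-class, and I would aim to produce this by exhibiting $n\geq 2$ with $H\cap H^n\neq\emptyset$, invoking the folklore fact recorded just before this lemma (that $H\cap H^n\neq\emptyset$ for some $n\geq 2$ forces $H$ to be a maximal subgroup).

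The key intermediate step is to show that $s^{i+kd}\in H$ for every $k\geq 0$. Since $s^i\,\mathcal{H}\,s^i s^d$, Green's Lemma guarantees that right multiplication by $s^d$ maps $H=H_{s^i}$ bijectively onto $H_{s^j}=H$; iterating this, starting from $s^i\in H$, gives $s^{i+d}, s^{i+2d}, \dots\in H$, so the whole arithmetic progression of exponents $E=\{\, i+kd : k\geq 0\,\}$ indexes elements of $H$. With this closure in hand, the remaining point is purely arithmetic: I would set $n=d+1\geq 2$, and observe that $ni=(d+1)i=i+di$, so the exponent $ni$ again lies in $E$ (taking $k=i\geq 0$). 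Hence $s^{ni}\in H$, while also $s^{ni}=(s^i)^n\in H^n$, giving $s^{ni}\in H\cap H^n$ with $n\geq 2$.

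Applying the folklore fact then shows $H=H_{s^i}$ is a maximal subgroup, and since $s^i\in H$ this exhibits a power of $s$ lying in a subgroup of $S$. As $s$ was arbitrary, $S$ is an epigroup. I do not expect a genuinely hard obstacle here; the one subtle point worth flagging is that the naive choice $n=2$ would require $d\mid i$ and so need not work, which is precisely why the lemma is set up to use the more general form of the folklore fact for arbitrary $n\geq 2$. Choosing $n=d+1$ sidesteps this divisibility issue cleanly, and the only tool beyond elementary arithmetic is the permutation-of-$H$ consequence of Green's Lemma already used elsewhere in the paper.
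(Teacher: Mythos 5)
Your proof is correct and follows essentially the same route as the paper: both arguments locate $i<j$ with $s^i\,\mathcal{H}\,s^j$ and then show $s^{i(j-i+1)}\in H_{s^i}\cap H_{s^i}^{\,j-i+1}$, invoking the folklore fact that $H\cap H^n\neq\emptyset$ for some $n\geq 2$ forces $H$ to be a group. The only cosmetic difference is that you iterate Green's Lemma directly where the paper phrases the same computation via $s^{j-i}\in\Stab(H)$.
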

	
\begin{proof}
	Let $s \in S$. 
	As $S$ has finitely many $\mathcal{H}$-classes there exist $i, \, j \in \mathbb{N}$ with $i < j$ such that $s^i \mathcal{H} s^j$.  
	Let $H$ be the $\mathcal{H}$-class of $s^i$. 
	Then $s^{j-i} \in \Stab(H)$. 
	Hence $(s^i)^{j-i}=(s^{j-i})^i \in\Stab(H)$, so $(s^i)^{j-i+1}=s^i(s^i)^{j-i}\in H$. 
	Therefore $H^{j-i+1}\cap H\neq\emptyset$, and so $H$ is a group. 
\end{proof}
	
\begin{lem}
\label{periodic}
	Let $S$ be a semigroup with finitely many $\mathcal{H}$-classes. 
	If every maximal subgroup of $S$ is torsion then $S$ is periodic.
\end{lem}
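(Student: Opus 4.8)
The plan is to reduce everything to the epigroup structure just established in Lemma \ref{Lem1} and then use the torsion hypothesis to force each monogenic subsemigroup to be finite. Recall that $S$ being \emph{periodic} means that for every $s\in S$ there exist $a>b$ with $s^a=s^b$, equivalently that $\langle s\rangle$ is finite for every $s$. So it suffices to fix an arbitrary $s\in S$ and produce such a relation.

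First I would invoke Lemma \ref{Lem1}: since $S$ has finitely many $\mathcal{H}$-classes it is an epigroup, so some power $s^k$ lies in a subgroup of $S$. The one point requiring a word of care is that the torsion hypothesis is stated for \emph{maximal} subgroups, so I must observe that the subgroup supplied by the epigroup property can be taken to be a group $\mathcal{H}$-class: indeed, the proof of Lemma \ref{Lem1} already places $s^k$ in a group $\mathcal{H}$-class $H$, and these are exactly the maximal subgroups of $S$ by the equivalences recorded in Section 2. Thus $s^k$ lies in a maximal subgroup $H$, to which the hypothesis applies.

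Next I would extract the periodicity relation. Since $H$ is torsion, the element $s^k$ has finite order, say $n$, in the group $H$; writing $e$ for the identity (idempotent) of $H$ we get $s^{kn}=(s^k)^n=e$. As $e$ is the identity of $H$ and $s^k\in H$, this gives $s^{k(n+1)}=s^{kn}s^k=e\,s^k=s^k$, so $s^{k(n+1)}=s^k$ with $k(n+1)>k$. Hence $\langle s\rangle$ is finite. Because $s$ was arbitrary, every element of $S$ has finite order and $S$ is periodic.

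I do not expect a genuine obstacle here: once Lemma \ref{Lem1} is in hand, the argument is essentially immediate. The only subtlety, as noted, is matching the subgroup from the epigroup property with a maximal subgroup so that the torsion assumption is actually applicable; everything else is a short direct computation of the order of $s^k$.
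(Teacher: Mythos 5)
Your proof is correct and follows essentially the same route as the paper: invoke Lemma \ref{Lem1} to get a power of $s$ inside a (maximal) subgroup, use the torsion hypothesis to make some power of $s$ idempotent-related, and read off a relation $s^a=s^b$ with $a>b$. The paper phrases the conclusion as $s^{2i}=s^i$ where $s^i=e$, while you write $s^{k(n+1)}=s^k$; the extra care you take in identifying the subgroup from the epigroup property with a maximal subgroup is sound and matches what the paper leaves implicit.
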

	
\begin{proof}
	By Lemma \ref{Lem1}, $S$ is an epigroup. 
	Let $s \in S$. 
	Then there is a power of $s$ in a torsion subgroup of $S$; in particular, there exists $i \in \mathbb{N}$ such that $s^i=e$, where $e$ is idempotent. 
	Hence $s^{2i}=e^2=e=s^i$ and $S$ is periodic. 
\end{proof}

\begin{cor}
\label{cor12}
Let $S$ be a semigroup with finitely many $\mathcal{H}$-classes. 
If $S$ is weakly subsemigroup separable then $S$ is periodic.
\end{cor}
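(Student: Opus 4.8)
The plan is to reduce the claim, via Lemma \ref{periodic}, to showing that every maximal subgroup of $S$ is torsion. Recall from the preliminaries that the maximal subgroups of $S$ are exactly the group $\mathcal{H}$-classes, and each such $\mathcal{H}$-class is in particular a subsemigroup of $S$. So the strategy is: first establish that each maximal subgroup is torsion, then invoke Lemma \ref{periodic} (which applies precisely because $S$ has finitely many $\mathcal{H}$-classes) to conclude that $S$ is periodic.

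First I would fix a maximal subgroup $G$ of $S$, viewed as a subsemigroup. Since $S$ is weakly subsemigroup separable by hypothesis, Proposition \ref{lem5} tells us that weak subsemigroup separability is inherited by subsemigroups, so $G$ is itself weakly subsemigroup separable (as a semigroup).

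Next I would apply the group-theoretic characterisation already recorded as Proposition \ref{GroupSep}(1): a group is weakly subsemigroup separable if and only if it is torsion and weakly subgroup separable. In particular $G$ must be torsion. (Morally this is the content of Example \ref{eg1}: a non-torsion group contains a copy of $\mathbb{Z}$ as a subsemigroup, and $\mathbb{Z}$ is not weakly subsemigroup separable, so by Proposition \ref{lem5} neither is $G$.) Since $G$ was an arbitrary maximal subgroup, every maximal subgroup of $S$ is torsion.

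Finally, because $S$ has only finitely many $\mathcal{H}$-classes and all its maximal subgroups are torsion, Lemma \ref{periodic} immediately gives that $S$ is periodic, completing the proof. I do not anticipate a genuine obstacle here: the argument is a short chain of already-established results, and the only point requiring care is correctly identifying maximal subgroups with group $\mathcal{H}$-classes so that the subsemigroup inheritance of Proposition \ref{lem5} can be applied to each of them.
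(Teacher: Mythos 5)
Your proposal is correct and follows essentially the same route as the paper: restrict to each maximal subgroup via Proposition \ref{lem5}, deduce it is torsion (the paper cites Example \ref{eg1} directly, you route through Proposition \ref{GroupSep}(1), which is the same underlying fact), and conclude with Lemma \ref{periodic}. No issues.
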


\begin{proof}
As $S$ is weakly subsemigroup separable, then so are all of its maximal subgroups by Proposition $\ref{lem5}$.
Now if a group is not torsion then it contains a subgroup isomorphic to $\mathbb{Z}$ and therefore is not weakly subsemigroup separable by Proposition $\ref{lem5}$ and
Example \ref{eg1}. 
Thus all the maximal subgroups of S are torsion and the result follows by Lemma \ref{periodic}.
\end{proof}

\begin{cor}
\label{cor13}
Let $S$ be a semigroup with finitely many $\mathcal{H}$-classes. 
If every Sch\"utzenberger group of $S$ is weakly subsemigroup separable then $S$ is periodic. 
\end{cor}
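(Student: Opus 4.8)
The plan is to mirror the proof of Corollary \ref{cor12} almost verbatim, the only change being how one establishes that the maximal subgroups of $S$ are weakly subsemigroup separable. In Corollary \ref{cor12} this came from the hypothesis that $S$ itself is weakly subsemigroup separable together with Proposition \ref{lem5}; here I would instead extract it directly from the hypothesis on Sch\"utzenberger groups.

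First I would recall that the maximal subgroups of $S$ are precisely its group $\mathcal{H}$-classes, and that for a group $\mathcal{H}$-class $H$ one has $\Gamma(H)\cong H$. Thus the assumption that every Sch\"utzenberger group of $S$ is weakly subsemigroup separable specialises, on the group $\mathcal{H}$-classes, to the statement that every maximal subgroup of $S$ is weakly subsemigroup separable. Next, exactly as in Corollary \ref{cor12}, I would observe that a weakly subsemigroup separable group must be torsion: a non-torsion group contains a subgroup isomorphic to $\mathbb{Z}$, which by Proposition \ref{lem5} would then also be weakly subsemigroup separable, contradicting Example \ref{eg1}. Hence every maximal subgroup of $S$ is torsion. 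Finally, since $S$ has only finitely many $\mathcal{H}$-classes and all its maximal subgroups are torsion, Lemma \ref{periodic} yields that $S$ is periodic.

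I do not anticipate any genuine obstacle; the result is essentially a restatement of Corollary \ref{cor12} under a formally different but equivalent hypothesis. The one point worth flagging is that the full strength of the assumption is not used: the Sch\"utzenberger groups of the \emph{non-group} $\mathcal{H}$-classes play no role, since the entire argument is driven by the group $\mathcal{H}$-classes, for which the Sch\"utzenberger group is isomorphic to the $\mathcal{H}$-class itself. This is precisely what lets the torsion argument of Corollary \ref{cor12} be carried over without change.
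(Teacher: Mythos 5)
Your proposal is correct and matches the paper's proof: the paper likewise notes that every maximal subgroup is (isomorphic to) a Sch\"utzenberger group of a group $\mathcal{H}$-class, hence weakly subsemigroup separable, hence torsion, and then applies Lemma \ref{periodic}. You simply spell out the torsion step (via $\mathbb{Z}$, Proposition \ref{lem5} and Example \ref{eg1}) in slightly more detail than the paper does.
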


\begin{proof}
If every Sch\"utzenberger group is weakly subsemigroup separable, then every maximal subgroup of $S$ is weakly subsemigroup separable and hence torsion, so $S$ is periodic by Lemma \ref{periodic}.
\end{proof}

At this point on our path to prove Theorem \ref{thm6}, we introduce Green's relation $\mathcal{J}$ on a semigroup $S$:
\[\mathcal{J}=\{(x,y) \mid S^1xS^1=S^1yS^1\}.\] 
This is an equivalence relation and $\mathcal{H} \subseteq \mathcal{J}$. 
It is also true that any ideal is a union of $\mathcal{J}$-classes. 
For more on Green's $\mathcal{J}$ relation see \cite[Chapter 2]{howie1995fundamentals}.

We show that an epigroup with finitely many $\mathcal{J}$-classes is locally finite if and only if all its maximal subgroups are locally finite. 
We first consider the case when a semigroup with a zero only has two $\mathcal{J}$-classes.

A semigroup $S$ with a zero is called \emph{0-simple} if $S^2 \neq \{0\}$ and the only ideals of $S$ are $\{0\}$ and $S$. 
A 0-simple semigroup is called \emph{completely 0-simple} if it is both 0-simple and an epigroup. 
For equivalent definitions of completely 0-simple semigroups see \cite[Theorem 3.2.11]{howie1995fundamentals}.
Rees showed that the class of completely 0-simple semigroups coincides with the class of \emph{Rees matrix semigroups over zero-groups} {\cite[Theorem 3.2.3]{howie1995fundamentals}}. 
For a group $G$, let the zero-group $G^0$ be $G$ with a zero adjoined. 
Let $I,\Lambda$ be non empty sets and $P=(p_{\lambda i})$ be a $\Lambda \times I$ matrix with entries from $G^0$ such that no row or column of $P$ consists entirely of zeros. 
The Rees matrix semigroup over the zero-group $G^{0}$ is $S=M^0[G;I,\Lambda;P]=(I \times G \times \Lambda) \cup \{0\}$ with multiplication given as follows:
\begin{equation*}
\begin{split}
(i,a,\lambda)(j,b,\mu)&=\begin{cases} (i,ap_{\lambda j},\mu) &\text{if } p_{\lambda i} \neq 0, \\ 0 &\text{if } p_{\lambda i} =0, \end{cases} \\
(i,a,\lambda)0&=0(i,a,\lambda)=0\cdot0=0.
\end{split}
\end{equation*}

In a completely 0-simple semigroup $M^0[G;I, \Lambda; P]$, the $\mathcal{H}$-class of the element $(i,a ,\lambda)$ is $\{i\} \times G \times \{\lambda\}$. 
This is a maximal subgroup if and only if $p_{\lambda i} \neq 0$, in which case it is isomorphic to $G$.
	
\begin{lem}
 \label{Lem3}
 A completely 0-simple semigroup $S=M^0[G;I, \Lambda; P]$ is locally finite if and only if $G$ is locally finite.
\end{lem}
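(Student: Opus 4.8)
The plan is to prove the two implications separately. The forward direction is essentially immediate, while the backward direction requires a short analysis of how products are formed in a Rees matrix semigroup; the only real work is the bookkeeping in that analysis.

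For the forward direction, I would suppose $S$ is locally finite and recover local finiteness of $G$. Since no column of $P$ consists entirely of zeros, there exist $i\in I$ and $\lambda\in\Lambda$ with $p_{\lambda i}\neq 0$, so the $\mathcal{H}$-class $\{i\}\times G\times\{\lambda\}$ is a maximal subgroup isomorphic to $G$. This is a subsemigroup of $S$, and local finiteness is clearly inherited by subsemigroups (a finitely generated subsemigroup of a subsemigroup of $S$ is a finitely generated subsemigroup of $S$, hence finite). Thus $G$ is locally finite as a semigroup, which for a group is equivalent to being locally finite as a group, since a finitely generated subgroup is generated as a subsemigroup by the given generators together with their inverses.

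For the backward direction, suppose $G$ is locally finite and let $F\subseteq S$ be finite; the goal is to show $\langle F\rangle$ is finite. Discarding $0$ if it occurs, write $F=\{(i_1,a_1,\lambda_1),\dots,(i_n,a_n,\lambda_n)\}$ and set $I_F=\{i_1,\dots,i_n\}$ and $\Lambda_F=\{\lambda_1,\dots,\lambda_n\}$, both finite. The key observation is that any nonzero product $(i_{k_1},a_{k_1},\lambda_{k_1})\cdots(i_{k_m},a_{k_m},\lambda_{k_m})$ equals $(i_{k_1},g,\lambda_{k_m})$ with $g=a_{k_1}p_{\lambda_{k_1}i_{k_2}}a_{k_2}\cdots p_{\lambda_{k_{m-1}}i_{k_m}}a_{k_m}$. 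Hence its first coordinate lies in $I_F$, its last coordinate lies in $\Lambda_F$, and its middle coordinate lies in the subsemigroup of $G$ generated by the finite set $B=\{a_1,\dots,a_n\}\cup\{p_{\lambda i}\neq 0 : \lambda\in\Lambda_F,\ i\in I_F\}$.

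Since $G$ is locally finite, $\langle B\rangle$ is finite, so every nonzero element of $\langle F\rangle$ lies in the finite set $I_F\times\langle B\rangle\times\Lambda_F$; adjoining $0$ shows $\langle F\rangle$ is finite, as required. The step to handle with care — which I expect to be the main (albeit mild) obstacle — is verifying the displayed formula for the middle coordinate of an iterated product. This follows by an easy induction on $m$ from the defining multiplication, using that each sandwich entry $p_{\lambda_{k_j}i_{k_{j+1}}}$ occurring in a nonzero product is itself nonzero and therefore belongs to $B$.
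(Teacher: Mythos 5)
Your proposal is correct and follows essentially the same approach as the paper: the forward direction via inheritance of local finiteness by a maximal subgroup isomorphic to $G$, and the backward direction by bounding the middle coordinates of all products by the finitely generated (hence finite) subsemigroup of $G$ generated by the entries of the generators together with the relevant nonzero sandwich matrix entries. The only cosmetic difference is that the paper organises the containment via intersections with the finitely many $\mathcal{H}$-classes and uses the sub\emph{group} generated by that finite set, whereas you use the explicit product formula and the subsemigroup; both yield the same finite bound.
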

	
\begin{proof}
	Suppose that $S$ is locally finite. 
	As $S$ has a subsemigroup isomorphic to $G$, and local finiteness is inherited by subsemigroups, it follows that $G$ is locally finite.
		
	Now suppose that $G$ is locally finite. 
	Let $T=\langle (i_1,g_1,\lambda_1)\, \dots, \, (i_n,g_n,\lambda_n) \rangle$. 
	We will show that the intersection of $T$ with a non-zero $\mathcal{H}$-class is finite. 
	Let $K$ be the subgroup of $G$ generated by the set 
	\[\{g_i \mid 1 \leq i \leq n\} \cup \{p_{\lambda_k i_l} \mid p_{\lambda_k i_l}\neq0, \,  1 \leq k, \, l\leq n\}.\] 
	As $K$ is finitely generated, it is finite. 
	Let $H=\{i\}\times G \times \{\lambda\}$ be a non-zero $\mathcal{H}$-class. 
	Then $H \cap T \subseteq \{i\} \times K \times \{\lambda\}$  and hence $H \cap T$ is finite. 
	As $T$ can only intersect finitely many non-zero $\mathcal{H}$-classes, it follows that $T$ is finite.
\end{proof}
	
\begin{lem}
	\label{localfiniteJ}
	Let $S$ be an epigroup with finitely many $\mathcal{J}$-classes. 
	Then $S$ is locally finite if and only if all its maximal subgroups are locally finite.
\end{lem}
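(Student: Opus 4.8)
The plan is to prove only the non-trivial direction, since the forward implication is immediate: local finiteness passes to subsemigroups (as already used in the proof of Lemma~\ref{Lem3}), and each maximal subgroup is a subsemigroup of $S$. So I assume every maximal subgroup of $S$ is locally finite and aim to deduce that $S$ is locally finite.

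My strategy is to decompose $S$ along a principal series and build up local finiteness one principal factor at a time. Because $S$ has only finitely many $\mathcal{J}$-classes, I would fix a linear extension $J_1, J_2, \dots, J_n$ of the $\mathcal{J}$-order and set $I_k = J_1 \cup \dots \cup J_k$; each $I_k$ is downward closed in the $\mathcal{J}$-order and hence an ideal, giving a chain $\emptyset = I_0 \subset I_1 \subset \dots \subset I_n = S$ in which each Rees quotient $I_k/I_{k-1}$ is the principal factor of $J_k$. Every principal factor of a semigroup is null or $0$-simple, and since $S$ is an epigroup (Lemma~\ref{Lem1}) --- and principal factors of epigroups are again epigroups --- each non-null factor is completely $0$-simple (or completely simple, for $J_1$), with structure group a maximal subgroup of $S$ sitting inside $J_k$. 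A null semigroup is trivially locally finite, as every product of two of its elements is zero, while a completely $0$-simple or completely simple factor is locally finite by Lemma~\ref{Lem3} (respectively its zero-free analogue), since its structure group is by hypothesis a locally finite maximal subgroup of $S$. Thus every $I_k/I_{k-1}$ is locally finite.

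The heart of the proof is then a gluing step which I would isolate as a lemma: if $I$ is an ideal of a semigroup $R$ with both $I$ and $R/I$ locally finite, then $R$ is locally finite. Granting this, a short induction up the chain --- local finiteness of $I_1$, then of $I_k$ from that of $I_{k-1}$ and $I_k/I_{k-1}$ --- shows $S = I_n$ is locally finite. To prove the gluing step, let $T = \langle a_1, \dots, a_m \rangle \leq R$ be finitely generated. Its image $\langle [a_1]_I, \dots, [a_m]_I \rangle$ in $R/I$ is finitely generated, hence finite; since the Rees quotient identifies only the elements of $I$, the set $T \setminus I$ is in bijection with the non-zero part of this image and so is finite. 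Consequently the ideal $T \cap I$ of $T$ has finite Rees index in $T$, so it is finitely generated, and therefore finite because $I$ is locally finite; as $T = (T \setminus I) \cup (T \cap I)$, the subsemigroup $T$ is finite.

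I expect the main obstacle to be this gluing step, and in particular the input that an ideal of finite Rees index in a finitely generated semigroup is itself finitely generated --- this is the fact that genuinely converts finiteness of $T \setminus I$ into control over $T \cap I$, and it will need to be quoted from the literature on Rees index. The structural claim that the principal factors of the epigroup $S$ are null or completely $0$-simple with the expected groups should be routine, but I would state it with care so that each factor's structure group is correctly identified as a maximal subgroup of $S$, which is exactly what lets the hypothesis feed into Lemma~\ref{Lem3}.
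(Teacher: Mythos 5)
Your proposal is correct and follows essentially the same route as the paper: the paper likewise inducts on the number of $\mathcal{J}$-classes, handles the bottom layer by adjoining a zero and reducing a $0$-minimal ideal to the null or completely $0$-simple case via Lemma~\ref{Lem3}, and performs exactly your gluing step, quoting \cite[Theorem 1.1]{10.1112/S0024611598000124} for the fact that an ideal of finite Rees index in a finitely generated semigroup is finitely generated. The only cosmetic difference is that you build up a principal series from the bottom, whereas the paper peels off a $0$-minimal ideal and passes to the Rees quotient in its inductive step.
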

	
\begin{proof}
		The forward direction follows as subsemigroups inherit local finiteness.
		
		Now assume that all the subgroups of $S$ are locally finite. 
		We will proceed by induction on the number of $\mathcal{J}$-classes.
		
		We may assume that $S$ has a zero. 
		If not then simply adjoin a zero. 
		Let $I$ be a 0-minimal ideal of $S$. 
		Then $I$ is either a null semigroup or it is $0$-simple by \cite[Proposition 2.4.9]{grillet1995semigroups}. 
		It is clear that any null semigroup is locally finite.
		Suppose then that $I$ is 0-simple.
		Clearly $I$ is an epigroup since $S$ is, so $I$ is completely $0$-simple.  
		The maximal subgroups of $I$ are the maximal subgroups of $S$ contained in $I$ by \cite[Proposition 2.4.2]{howie1995fundamentals}.
		Therefore, by Lemma \ref{Lem3} we have that $I$ is locally finite.
		If $I=S$ then we are done, so suppose that $I\neq S$. 
		
		Since the Rees quotient $S/I$ has one fewer $\mathcal{J}$-class than $S$, it is locally finite by the inductive hypothesis. 
		Let $T$ be a finitely generated subsemigroup of $S$. 
		As $S/I$ is locally finite, it follows that the Rees quotient $T/(T\cap I)$ is finite. 
		Hence $T \cap I$ is a subsemigroup with finite complement in $T$. 
		Therefore, as $T$ is finitely generated, $T \cap I$ is also finitely generated by \cite[Theorem 1.1]{10.1112/S0024611598000124}. 
		Then $T \cap I$ is finite as $I$ is locally finite. 
		Then $T=(T\!\setminus \!I) \cup (T \cap I)$ is finite and hence $S$ is locally finite.
\end{proof}

Lemmas \ref{Lem1} and \ref{localfiniteJ} together yield: 
	
\begin{cor}
\label{cor15}
	Let $S$ be a semigroup with finitely many $\mathcal{H}$-classes. 
	Then $S$ is locally finite if and only if all its maximal subgroups are locally finite.
\end{cor}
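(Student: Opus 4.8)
The plan is to deduce the corollary directly from Lemma \ref{localfiniteJ}, whose hypotheses require the semigroup to be an epigroup with finitely many $\mathcal{J}$-classes. So my first task is simply to verify that $S$ meets both of these hypotheses. The epigroup condition is already handled: by Lemma \ref{Lem1}, any semigroup with finitely many $\mathcal{H}$-classes is an epigroup. For the second hypothesis, I would observe that $\mathcal{H} \subseteq \mathcal{J}$, so every $\mathcal{J}$-class is a disjoint union of $\mathcal{H}$-classes. Consequently, if $S$ has only finitely many $\mathcal{H}$-classes, then it certainly has only finitely many $\mathcal{J}$-classes.

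Having confirmed that $S$ is an epigroup with finitely many $\mathcal{J}$-classes, I would then invoke Lemma \ref{localfiniteJ} to conclude that $S$ is locally finite if and only if all its maximal subgroups are locally finite, which is exactly the assertion of the corollary. I do not anticipate any genuine obstacle here: the result is a clean combination of the two preceding lemmas, and the only point that warrants explicit comment is the reduction from $\mathcal{H}$-classes to $\mathcal{J}$-classes, which follows immediately from the inclusion $\mathcal{H} \subseteq \mathcal{J}$ together with the fact that the $\mathcal{J}$-classes partition $S$ into unions of $\mathcal{H}$-classes. The substantive work has all been done in Lemmas \ref{Lem3} and \ref{localfiniteJ}, where the inductive argument on $\mathcal{J}$-classes and the Rees matrix analysis live; the corollary merely repackages these for the more concrete setting of finitely many $\mathcal{H}$-classes.
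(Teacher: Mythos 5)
Your proposal is correct and is exactly the paper's argument: the paper derives the corollary from Lemmas \ref{Lem1} and \ref{localfiniteJ}, with the passage from finitely many $\mathcal{H}$-classes to finitely many $\mathcal{J}$-classes implicit via $\mathcal{H}\subseteq\mathcal{J}$, which you have simply made explicit. No gaps.
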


We are now in a position to prove Theorem $\ref{thm6}$.
	
\begin{proof}[Proof of Theorem \ref{thm6}]
Let $S$ be a semigroup with finitely many $\mathcal{H}$-classes whose maximal subgroups are all solvable. 

If $S$ is weakly subsemigroup separable then all its maximal subgroups are weakly subsemigroup separable by Proposition \ref{lem5}, and hence they are torsion by Proposition \ref{GroupSep}.  
Torsion solvable groups are locally finite, see \cite[5.4.11]{Robinson}.  
It follows from Corollary \ref{cor15} that $S$ is locally finite.  
Then certainly $S^1$ is locally finite.  
It is well known that the property of being locally finite is closed under subsemigroups and quotients.  
Therefore, each Sch\"utzenberger group, being a quotient of a subsemigroup of $S^1$, is locally finite. 
Since $S$ is residually finite, so are all it Sch\"utzenberger groups by Theorem \ref{finiteresiduallyfinite}.  
Hence, all the Sch\"utzenberger groups are weakly subsemigroup separable by Lemma \ref{locallyfinite}.

Now assume that all the Sch\"utzenberger groups of $S$ are weakly subsemigroup separable. 
Then they are certainly residually finite and it follows that $S$ is residually finite by Theorem \ref{finiteresiduallyfinite}. 
Furthermore, as $S$ only has finitely many $\mathcal{H}$-classes and all its Sch\"utzenberger groups are weakly subsemigroup separable, $S$ is periodic by Corollary \ref{cor13}. 
Then all its maximal subgroups are torsion and solvable so it follows that they are locally finite. 
Hence $S$ is locally finite by Corollary \ref{cor15} and therefore $S$ is weakly subsemigroup separable by Lemma \ref{locallyfinite}.  
\end{proof}
	
If there is any hope of solving Open Problem $\ref{op}$, we must consider cases where an infinite Sch\"utzenberger group is weakly subsemigroup separable but not solvable. 
The authors are aware of only a limited number of such groups. 
One such example is the Grigorchuk group, which is a finitely generated infinite torsion group that is weakly subgroup separable (and hence weakly subsemigroup separable), see \cite{grigorchuk_wilson_2003}. 
In particular, the following problem remains open.

\begin{op}
Let $G$ be the Grigorchuk group, let $I$ and $\Lambda$ finite sets, and let $P=(p_{\lambda i})$ a $\Lambda \times I$ matrix with entries from $G^0$ such that no row or column consists entirely of zeros. 
Is the semigroup $M^0[G;I,\Lambda; P]$ weakly subsemigroup separable?
\end{op}

\section{Acknowledgements}
The authors would like to thank the referee for their helpful comments and care with the paper.
The first author is grateful to EPSRC for financial support.
The second author is grateful to the School of Mathematics and Statistics of the University of St Andrews for financial support.

\end{document}